\definecolor{trp}{rgb}{1,1,1}
\definecolor{red}{rgb}{1,0,.2}
\newtheorem{theorem}{Theorem}[section]
\theoremstyle{plain}
\newtheorem{definition}[theorem]{Definition}
\newtheorem{lemma}[theorem]{Lemma}
\newtheorem{prop}[theorem]{Proposition}
\numberwithin{equation}{section}
\newcommand{\R}{\mathbb{R}}
\newcommand{\N}{\mathbb{N}}
\newcommand{\pr}[1]{\mathrm{proj}_{e^{ss}(#1)}}
\newcommand{\proj}{\mathrm{proj}}
\newcommand{\lv}{\boldsymbol\lambda}
\newcommand{\xv}{\underline{x}}
\newcommand{\yv}{\underline{y}}
\newcommand{\vv}{\underline{v}}
\newcommand{\ii}{\mathbf{i}}
\newcommand{\jj}{\mathbf{j}}
\newcommand{\tv}{\underline{t}}
\newcommand{\wv}{\underline{w}}
\newcommand{\ldim}{\dim_{\mathrm{Lyap}}}
\newcommand{\tvv}{\mathbf{t}}
\newcommand{\clo}{\overline{\mathcal{O}}}
\newcommand{\Alpha}{\mathcal{A}}
\newcommand{\y}{\underline{\mathbf{y}}}
\newcommand{\e}[2]{e^{#2}_{\mathcal{#1}}}
\newcommand{\iv}{\overline{\imath}}
\newcommand{\jv}{\overline{\jmath}}
\newcommand{\icv}{\overrightarrow{\imath}}
\begin{document}
\title[Dimension maximizing measures]{Dimension maximizing measures for self-affine systems}

\author{Bal\'azs B\'ar\'any}
\address[Bal\'azs B\'ar\'any]{Budapest University of Technology and Economics, BME-MTA Stochastics Research Group, P.O.Box 91, 1521 Budapest, Hungary \&
Mathematics Institute, University of Warwick, Coventry CV4 7AL, UK}
\email{balubsheep@gmail.com}

\author{Micha\l\ Rams}
\address[Micha\l\ Rams]{Institute of Mathematics, Polish Academy of Sciences, ul. S\'niadeckich 8, 00-656 Warszawa, Poland}
\email{rams@impan.pl}

\subjclass[2010]{Primary 28A80 Secondary 37C45}
\keywords{Self-affine measures, self-affine sets, Hausdorff dimension.}
\thanks{The research of Bal\'azs B\'ar\'any was supported by the grants EP/J013560/1 and OTKA K104745. Micha\l\ Rams was supported by National Science Centre grant
2014/13/B/ST1/01033 (Poland).}

\begin{abstract}
In this paper we study the dimension theory of planar self-affine sets satisfying dominated splitting in the linear parts and strong separation condition. The main results of this paper is the existence of dimension maximizing Gibbs measures (K\"aenm\"aki measures). To prove this phenomena, we show that the Ledrappier-Young formula holds for Gibbs measures and we introduce a transversality type condition for the strong-stable directions on the projective space.
\end{abstract}
\date{\today}

\maketitle

\thispagestyle{empty}

\section{Introduction and Statements}

Let $\Alpha:=(A_1, A_2,\dots, A_N)$ be a finite set of contracting, non-singular $2\times2$ matrices, and let $\Phi:=\left\{f_i:\xv\mapsto A_i\xv+\tv_i\right\}_{i=1}^N$ be an {\it iterated function system} (IFS) on the plane with affine mappings, where $\|A_i\|<1$ and  $\underline{t}_i\in\R^2$ for $i=1,\dots,N$. It is a well-known fact that there exists an unique non-empty compact subset $\Lambda$ of $\R^2$ such that
$$
\Lambda=\bigcup_{i=1}^Nf_i(\Lambda).
$$
We call the set $\Lambda$ the {\it attractor} of $\Phi$ or {\it self-affine set}.

Let us denote the Hausdorff dimension of a set $X$ by $\dim_HX$. Moreover, denote by $\underline{\dim}_BX$ and by $\overline{\dim}_BX$ the lower and upper box dimension. If the upper and lower box dimensions coincide then we call the common value the box dimension and denoted by $\dim_BX$. For the definitions and basic properties, we refer to Falconer~\cite{Fb1}.

 The image of the unit ball under the affine mapping $f(\xv)=A\xv+\tv$ is an ellipse. The length of the longer and shorter axes of the ellipse depends only on the matrix $A$, and we call these values the {\it singular values} of $A$. We denote the $i$th singular value of $A$ by $\alpha_i(A)$. More precisely, $\alpha_i(A)$ is the positive square root of the $i$th eigenvalue of $AA^*$, where $A^*$ is the transpose of $A$. We note that in this case, $\alpha_1(A)=\|A\|$ and $\alpha_2(A)=\|A^{-1}\|^{-1}$, where $\|.\|$ is the usual matrix norm induced by the Euclidean norm on $\R^2$. Moreover, $\alpha_1(A)\alpha_2(A)=|\det A|$.

The natural cover of these ellipses play important role in the calculation of the dimension of self-affine sets. The image of the unit ball under an affine mapping can be covered by $1$ ball with radius $\alpha_1(A)$, or can be covered by approximately $\alpha_1(A)/\alpha_2(A)$ balls with radius $\alpha_2(A)$. This leads us to the definition of singular value function. For $s\geq0$ define the {\it singular value function} $\phi^s$ as follows
\begin{equation}\label{esvf}
\phi^s(A):=\left\{\begin{array}{cc}
\alpha_1(A)^s & 0\leq s\leq1 \\
\alpha_1(A)\alpha_2(A)^{s-1} & 1<s\leq2 \\
\left(\alpha_1(A)\alpha_2(A)\right)^{s/2} & s>2.
\end{array}\right.
\end{equation}
Falconer~\cite{F} introduced the {\it subadditive pressure}
\begin{equation}\label{esap}
P_{\mathcal{A}}(s):=\lim_{n\rightarrow\infty}\frac{1}{n}\log\sum_{i_1,\dots,i_n=1}^N\phi^s(A_{i_1}\cdots A_{i_n}).
\end{equation}
The function $P_{\mathcal{A}}:[0,\infty)\mapsto\R$ is continuous, strictly monotone decreasing on $[0,\infty)$, moreover $P_{\mathcal{A}}(0)=\log N$ and $\lim_{s\rightarrow\infty}P_{\mathcal{A}}(s)=-\infty$. Falconer~\cite{F} showed that for the unique root $s_0:=s_0(\mathcal{A})$ of the subadditive pressure function $\overline{\dim}_B\Lambda\leq\min\left\{2,s_0\right\}$ and if $\|A_i\|<1/3$ for every $i=1,\dots,N$ then $$\dim_H\Lambda=\dim_B\Lambda=\min\left\{2,s_0\right\}\text{ for Lebesgue-almost every $\tvv=(\tv_1,\dots,\tv_N)\in\R^{2N}$.}$$  The condition was later weakened to $\|A_i\|<1/2$ by Solomyak, see \cite{S}. We call the value $s_0$ the {\it affinity dimension} of $\Phi$. K\"aenm\"aki~\cite{K} showed that for Lebesgue-almost every $\tvv=(\tv_1,\dots,\tv_N)\in\R^{2N}$ there exists an invariant measure $\nu^K$ supported on $\Lambda$ such that $\dim_H\nu^K=\dim_H\Lambda=\min\left\{2,s_0\right\}$. Under our assumptions: SSC (see below) and dominated splitting (see below, Definition~\ref{ddomsplit}) this measure is image of a Gibbs (Definition~\ref{dKaenmaki}), but in general not image of a Bernoulli.

Other type of 'almost surely' result was unknown previously. The main advantage of this paper is to give an almost everywhere condition on the set of matrices instead of on the set of translation vectors.

In this paper we consider IFSs of affinities which satisfy the {\it strong separation condition} (SSC), i.e.
\begin{equation*}
f_i(\Lambda)\cap f_j(\Lambda)=\emptyset\text{ for every $i\neq j$.}
\end{equation*}
We note that the strong separation condition implies $s_0<2$.

Falconer~\cite{F2} proved that if $\Phi$ satisfies a separation condition (milder than SSC) and the projection of $\Lambda$ in every direction contains an interval then the box dimension of a self-affine set is equal to the affinity dimension. Hueter and Lalley~\cite{HL} gave conditions, which ensure that the Hausdorff and box dimension of a self-affine set equal to the affinity dimension.

In the recent paper of B\'ar\'any~\cite{B}, the result of Hueter and Lalley~\cite{HL} was generalised for self-affine measures. That is, under the same conditions of Hueter and Lalley~\cite{HL} the Hausdorff dimension of any self-affine measure is equal to its Lyapunov dimension. In particular, in \cite{B} the author proved that under slightly more general conditions any self-affine measure is exact dimensional and gave a formula, which connects entropy, Lyapunov exponents and the projection of the measure (Ledrappier-Young formula).

Recently, Falconer and Kempton~\cite{FK} used methods from ergodic theory along with properties of the Furstenberg measure and obtained conditions under which certain classes of plane self-affine sets have Hausdorff and box dimension equal to the affinity dimension. By adapting the conditions of Falconer and Kempton~\cite{FK} and B\'ar\'any~\cite{B} we prove that for "typical" linear parts ($\left\{A_i\right\}_{i=1}^N$) if the SSC holds then the dimension of self-affine set is equal to the affinity dimension. Precisely, let
\begin{equation}\label{emxset}
\mathfrak{M}:=\left\{A\in\R_+^{2\times2}\cup\R_-^{2\times2}:0<\frac{|\det A|}{\vvvert A\vvvert^2}<\frac{1}{2}\text{\ and\ }\|A\|<1\right\},
\end{equation}
where
$$
\vvvert A\vvvert=\min\left\{|a|+|b|,|c|+|d|\right\}\text{ for }A=\left[\begin{matrix}
a & b \\
c & d \\
\end{matrix}\right].
$$
Let us define the following sets
\begin{equation}\label{emxset2}
\mathfrak{N}:=\left\{A\in\mathfrak{M}:\|A^{-1}\|\|A\|^2\leq1\right\}\text{\ and\ }\mathfrak{O}_N:=\left\{\mathcal{A}\in\mathfrak{M}^N:s_0(\mathcal{A})>5/3\right\},
\end{equation}
for every $N\geq2$.

\begin{theorem}\label{tmaindim}
	Let $N\geq2$. For $\mathcal{L}_{4N}$-almost every $\mathcal{A}\in\mathfrak{N}^N\bigcup\mathfrak{O}_N$, if $\tvv=(\tv_1,\dots,\tv_N)\in\R^{2N}$ is chosen such that $\Phi:=\left\{f_i:\xv\mapsto A_i\xv+\tv_i\right\}_{i=1}^N$ satisfies the SSC then there exists a measure $\nu^K$ supported on the attractor $\Lambda$ of $\Phi$ such that
	$$
	\dim_H\nu^K=\dim_H\Lambda=\dim_B\Lambda=s_0(\mathcal{A}).
	$$
\end{theorem}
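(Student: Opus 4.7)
\medskip\noindent\textbf{Proof plan.} The plan is to take the Käenmäki Gibbs measure $\nu^K$ associated with the singular value function $\phi^{s_0}$ and show that, for almost every admissible $\mathcal{A}$, its Hausdorff dimension attains the affinity dimension $s_0=s_0(\mathcal{A})$. Since by K\"aenm\"aki's variational principle $\nu^K$ has Lyapunov dimension equal to $s_0$, it is enough to show that the Lyapunov dimension and the Hausdorff dimension coincide. To do this I would invoke the Ledrappier--Young type formula for Gibbs measures that the paper promises in the abstract: under the dominated splitting forced by the cone condition built into $\mathfrak{M}$ (the inequality $|\det A|/\vvvert A\vvvert^2<1/2$ forces every word in $\mathcal{A}$ to map the positive/negative cone strictly into itself, so there is a continuous strong-stable direction field $e^{ss}$ on the symbolic space), the dimension of $\nu^K$ decomposes into an entropy/Lyapunov part and the dimension of the push-forward of $\nu^K$ under projection onto a line transverse to $e^{ss}$. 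Thus the task reduces to proving that this projected measure has the largest possible dimension allowed by its Lyapunov exponents, namely $\min\{1,s_0-1\}$ in the first regime and the full $s_0-1$ (or $1$) in the second.

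The main technical step, and the one I expect to be the chief obstacle, is the transversality-type condition on the strong-stable directions viewed as a map $\mathcal{A}\mapsto e^{ss}(\mathcal{A},\iv)$ from parameter space to $\mathbb{RP}^1$. Adapting the Falconer--Kempton strategy, I would write the projected measure as a kind of convolution of cylinder measures with the distribution of the strong-stable directions (which plays the role of a Furstenberg measure here), and control its dimension through a transversality estimate of the form
\begin{equation*}
\mathcal{L}_{4N}\bigl\{\mathcal{A}:\ |\pr{\iv}(x)-\pr{\jv}(x)|\leq r\bigr\}\leq C r
\end{equation*}
for $\iv,\jv$ in disjoint cylinders, uniformly in $x$ on a fixed compact parameter patch. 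Establishing such a bound requires Fubini-differentiating with respect to the entries of a single matrix $A_i$ while keeping the others fixed, showing that the derivative of the strong-stable direction in the entries of $A_i$ is non-degenerate on a set of full measure. The cone condition in $\mathfrak{M}$ plus a suitable non-degeneracy argument (exploiting that $e^{ss}(\iv)$ depends analytically and non-trivially on, say, the first coordinate of $A_{i_1}$) should yield this, after which a standard Borel--Cantelli / energy integral argument gives the required dimension of the projected measure for $\mathcal{L}_{4N}$-a.e.\ $\mathcal{A}$.

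The role of the splitting $\mathfrak{N}^N\cup\mathfrak{O}_N$ is to make the Ledrappier--Young bookkeeping work on both sides of $s_0=2$. On $\mathfrak{N}^N$, the inequality $\|A^{-1}\|\|A\|^2\leq 1$ bounds the ratio of Lyapunov exponents of $\nu^K$ in such a way that the contribution of the projected measure to the dimension stays below $1$, so the transversality bound above directly yields the right projected dimension by Borel--Cantelli. On $\mathfrak{O}_N$, where $s_0>5/3$ forces the measure to be ``fat'', I would instead show that the projected measure has absolutely continuous Furstenberg--type part, using the stronger quantitative transversality available when $s_0$ is close to $2$; this is analogous to Solomyak-type arguments and uses that $s_0>5/3$ pushes the Sobolev exponent needed into a regime supplied by the transversality estimate. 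Once the projected dimension is identified in either case, Ledrappier--Young closes the loop, giving $\dim_H\nu^K=$ Lyapunov dimension of $\nu^K=s_0$, and since $s_0$ is already an upper bound on $\overline{\dim}_B\Lambda$, the chain $s_0=\dim_H\nu^K\leq \dim_H\Lambda\leq\overline{\dim}_B\Lambda\leq s_0$ forces all inequalities to be equalities, proving the theorem.
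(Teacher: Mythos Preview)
Your overall architecture matches the paper's: K\"aenm\"aki measure, Ledrappier--Young for Gibbs measures under dominated splitting, and a transversality argument for the strong-stable direction map $e^{ss}$ to pin down the dimension of the Furstenberg-type measure $(e^{ss})_*\mu_+$. Two points, however, deserve correction.

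\textbf{The $\mathfrak{O}_N$ mechanism.} Your plan to show that for $s_0>5/3$ the projected (or Furstenberg) measure becomes absolutely continuous is not what the paper does, and there is no reason to expect $s_0>5/3$ to force absolute continuity of anything. The paper instead uses the Falconer--Kempton alternative built into the hypothesis of their dimension theorem: it suffices that
\[
\dim_H (e^{ss})_*\mu_+ + \dim_H \nu > 2,
\]
and the transversality argument only delivers $\dim_H (e^{ss})_*\mu_+=\min\{1,h_{\mu}/(\chi^{ss}-\chi^s)\}$. Plugging in the K\"aenm\"aki relations $h_\mu=\chi^s+(s_0-1)\chi^{ss}$ and the trivial lower bound $\dim_H\nu\ge 2h_\mu/\chi^{ss}$ (from covering by round balls), a short algebraic computation shows the displayed inequality holds exactly when $s_0>5/3$. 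So $5/3$ is an algebraic threshold for a dimension-sum criterion, not a Sobolev/absolute-continuity threshold.

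\textbf{The transversality step.} You propose to differentiate $e^{ss}$ in the $4N$ matrix entries and argue non-degeneracy directly. The paper takes a sharper route that you may find easier to execute: it foliates $\mathfrak{M}^N$ by $N$-dimensional leaves $\mathcal A(\underline t)=\{A_i+t_iB_i\}$ with the specific rank-one perturbation $B_i=\begin{pmatrix}a_i+b_i & -(a_i+b_i)\\ c_i+d_i & -(c_i+d_i)\end{pmatrix}$. Under the identification of $e^{ss}(\ii_+)$ with the attractor of the M\"obius-type IFS $S_i(x)=\dfrac{|a_i|x+|c_i|(1-x)}{(|a_i|+|b_i|)x+(|c_i|+|d_i|)(1-x)}$ on $[0,1]$, this perturbation becomes the pure translation $S_i\mapsto S_i+t_i$, and the contraction bound $|\det A_i|/\vvvert A_i\vvvert^2<1/2$ in $\mathfrak{M}$ gives $\|S_i'\|_\infty<1/2$, so classical transversality for translation families (Simon--Solomyak--Urba\'nski) applies out of the box. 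One then Fubini's along this measurable foliation of $\mathfrak{M}^N$ to get the $\mathcal L_{4N}$-a.e.\ conclusion. Your direct differentiation would have to rediscover essentially this computation; the explicit foliation makes the non-degeneracy transparent and reduces the dimension of the parameter space you need to control.

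A minor slip: the target dimension for the transversal projection $\nu^T_{\ii_+}$ is $\min\{1,h_\mu/\chi^s\}=\min\{1,s_0\}$, not $\min\{1,s_0-1\}$.
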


We call the measure $\nu^K$ the \textit{K\"aenm\"aki measure}.

The authors were recently informed of the result of Rapaport~\cite{Ra} and Morris and Shmerkin~\cite{MS}. By applying the main theorem of Rapaport~\cite{Ra}, one can extend the bound $5/3$ to $3/2$ in \eqref{emxset2}. Morris and Shmerkin~\cite{MS} proved similar statement to Theorem~\ref{tmaindim} under significantly different conditions on the matrices.

To prove Theorem~\ref{tmaindim}, we will need a more detailed study of the dimension of invariant measures. More precisely, we extend the results of \cite{B} for the natural projections of Gibbs measures. Theorem~\ref{tmaindim} is studied in higher generality.

\subsection*{Structure of the paper} After the Preliminaries (Section~\ref{sp}) we introduce the main technical result of the paper, the Ledrappier-Young formula generalised for Gibbs measures (Section~\ref{sLY}). In Section~\ref{ssstc} we introduce the strong-stable transversality condition (Definition~\ref{dsstrans}) and show that under this condition there exists a dimension maximizing Gibbs measure (K\"aenm\"aki measure) almost surely. In the last section we show Theorem~\ref{tmaindim} as a consequence of the previous studies.

\section{Preliminaries}\label{sp}

Let $\Sigma=\left\{1,\dots,N\right\}^{\mathbb{Z}}$ be the symbolic space of two side infinite sequences, $\Sigma^+=\left\{1,\dots,N\right\}^{\mathbb{N}}$ be the set of right side and $\Sigma^-=\left\{1,\dots,N\right\}^{\mathbb{Z}^-}$ be the set of left side infinite words. Denote the left shift operator on $\Sigma$ and $\Sigma^+$ by $\sigma$ and denote the right shift operator on $\Sigma$ and $\Sigma^-$ by $\sigma_-$. Thus, $\sigma$ and $\sigma_-$ are invertible on $\Sigma$ and $\sigma^{-1}=\sigma_-$. For any $\ii\in\Sigma$ (or $\jj\in\Sigma^{\pm})$
$$
[\ii|_m^n]:=\left\{\jj\in\Sigma\text{ (or $\jj\in\Sigma^{\pm}$)}:i_k=j_k\text{ for }m\leq k\leq n\right\}.
$$
For an $\ii=(\dots i_{-2}i_{-1}i_0i_1\dots)\in\Sigma$, denote by $\ii_+=(i_0i_1\dots)$ the right-hand side and by $\ii_-=(\dots i_{-2}i_{-1})$ the left-hand side of $\ii$. To avoid confusion, we write also $\ii_+$ if $\ii_+\in\Sigma^+$ and $\ii_-$ if $\ii_-\in\Sigma^-$.

For any $\ii_+,\jj_+\in\Sigma^+$ let $\ii_+\wedge\jj_+=\min\left\{n\geq0:i_n\neq j_n\right\}$. We define $\ii_-\wedge\jj_-=\min\left\{n-1\geq0:i_{-n}\neq j_{-n}\right\}$ similarly.

Let us denote the set of finite length words by $\Sigma^*=\bigcup_{n=0}^{\infty}\left\{1,\dots,N\right\}^n$, and for every $\iv=(i_1,\dots i_n)\in\Sigma^*$ denote the reversed word by $\icv=(i_n,\dots,i_1)$. Sometimes, we may also write $(\Sigma^-)^*$ for finite length words to emphasize the negative indexes.

If $\Phi:=\left\{f_i(\xv)=A_i\xv+\tv_i\right\}_{i=1}^N$ is an iterated function system on $\R^2$ with affine mappings such that $\|A_i\|<1$ for $i=1,\dots,N$, we define the \textit{natural projection} $\pi_-$ from $\Sigma^-$ to $\Lambda$ in a natural way
\begin{equation}\label{enatproj}
\pi^-(\dots i_{-2}i_{-1})=\lim_{n\rightarrow\infty}f_{i_{-1}}\circ\cdots\circ f_{i_{-n}}(\underline{0}).
\end{equation}

Let $\Alpha:=\left\{A_1, A_2,\dots, A_N\right\}$ be a finite set of non-singular $2\times2$ real matrices. Define a map from $\Sigma$ to $\Alpha$ in a natural way, i.e. $A(\ii):=A_{i_0}$. Let $A^{(n)}(\ii):=A(\sigma^{n-1}\ii)\cdots A(\ii)$ for $\ii\in\Sigma$ and $n\geq1$.

\begin{definition}\label{ddomsplit}
	We say that a set $\mathcal{A}=\left\{A_i\right\}_{i=1}^N$ of matrices satisfies the \textnormal{dominated splitting} if there are constants $C,\beta>0$ such that for every $n\geq1$ and every $i_0,\dots,i_{n-1}\in\left\{1,\dots,N\right\}$
	$$
	\frac{\alpha_1(A_{i_0}\cdots A_{i_{n-1}})}{\alpha_2(A_{i_0}\cdots A_{i_{n-1}})}\geq Ce^{n\beta}.
	$$
\end{definition}

Let $C_+:=\left\{(x,y)\in\R^2\backslash\{(0,0)\}:xy\geq0\right\}$ be the \textit{standard positive cone}. A \textit{cone} is an image of $C_+$ under a linear isomorphism and a \textit{multicone} is a disjoint union of finitely many cones. We say that a multicone $M$ is \textit{backward invariant} w.r.t. $\mathcal{A}$ if $\bigcup_{A\in\mathcal{A}}A^{-1}(M)\subset M^o$, where $M^o$ denotes the interior of $M$.

 For a $2\times 2$ matrix $A$ and a subspace $\theta$ of $\R^2$ we introduce the notation $\|A|\theta\|$, which is the norm of $A$ restricted to the subspace $\theta$, i.e. $\|A|\theta\|=\sup_{\vv\in\theta}\|A\vv\|/\|\vv\|$. Since $\theta$ is one dimensional, we get that for any $\vv\neq\underline{0}\in\theta$, $\|A|\theta\|=\|A\vv\|/\|\vv\|$, which is not true in higher dimension.

\begin{lemma}[\cite{ABY}, \cite{BG},\cite{BR}, \cite{Y}]\label{ldomsplit}
	The set $\Alpha$ of matrices satisfies the dominated splitting then for every $\ii\in\Sigma$ there are two one-dimensional subspaces $e^{ss}(\ii),e^s(\ii)$ of $\R^2$ such that
	\begin{enumerate}
		\item $A_{i_0}e^j(\ii)=e^j(\sigma\ii)$ for every $\ii\in\Sigma$ and $j=s,ss$,
		\item\label{ldomsplit2} there is a constant $C>0$ such that for every $n\geq1$ and $\ii\in\Sigma$
		\begin{eqnarray*}
			&& C^{-1}\|A^{(n)}(\ii)|e^s(\ii)\|\leq\alpha_1(A^{(n)}(\ii))\leq C\|A^{(n)}(\ii)|e^s(\ii)\|\text{ and }\\
			&& C^{-1}\|A^{(n)}(\ii)|e^{ss}(\ii)\|\leq\alpha_2(A^{(n)}(\ii))\leq C\|A^{(n)}(\ii)|e^{ss}(\ii)\|,
		\end{eqnarray*}
		\item there is a backward-invariant multicone $M$ that	$$e^s(\ii)=\bigcap_{n=1}^{\infty}A_{i_{-1}}\cdots A_{i_{-n}}(\overline{M^c})\text{ and }e^{ss}(\ii)=\bigcap_{n=1}^{\infty}A_{i_{0}}^{-1}\cdots A_{i_{n-1}}^{-1}(M),$$
		where $\overline{M^c}$ denotes the closure of the complement of $M$.
		\item\label{lpunif} The angle between $e^s(\ii),e^{ss}(\ii)$ is uniformly bounded below.
	\end{enumerate}
	We call the family of subspaces $e^{s}(\ii)$ \textnormal{stable directions} and $e^{ss}(\ii)$ \textnormal{strong stable directions}.
\end{lemma}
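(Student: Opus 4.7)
The plan is to derive all four conclusions from the existence of a backward-invariant multicone, which is the geometric counterpart of domination. I would first establish this multicone, then use it to define the two invariant direction fields via nested intersections in projective space, and finally read off the norm and angle estimates from the invariance.

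For the construction of $M$, I would work on the projective circle and consider the accumulation set $K$ of $A_{i_0}^{-1}\cdots A_{i_{n-1}}^{-1}(\theta)$ as $n\to\infty$, over all one-parameter families $\theta$ and all admissible words. The domination hypothesis $\alpha_1/\alpha_2 \geq Ce^{n\beta}$ implies that each $A_{i_0}^{-1}\cdots A_{i_{n-1}}^{-1}$ is an exponential projective contraction towards a single direction; hence $K$ is compact and, modulo a neighbourhood of the opposite cluster, totally disconnected. A finite open cover of $K$ by arcs whose union $M$ satisfies $A^{-1}(M)\subset M^o$ for every $A\in\mathcal{A}$ can then be extracted. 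The argument is standard and can be found in \cite{ABY, BG, Y}; its only delicate point is that a single cone is insufficient, which forces the ``multi'' in multicone.

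Given $M$, I define
$$
e^{ss}(\ii) := \bigcap_{n\geq 1} A_{i_0}^{-1}\cdots A_{i_{n-1}}^{-1}(M), \qquad e^s(\ii) := \bigcap_{n\geq 1} A_{i_{-1}}\cdots A_{i_{-n}}(\overline{M^c}).
$$
The nested intersections reduce to single directions because the projective diameter of $A_{i_0}^{-1}\cdots A_{i_{n-1}}^{-1}(M)$ decays as $(\alpha_2/\alpha_1)(A_{i_0}\cdots A_{i_{n-1}})$, which by domination is at most $C^{-1}e^{-n\beta}$. Item (1) is immediate from the definition, since dropping the first factor of the intersection transforms $e^{ss}(\ii)$ into $A_{i_0}^{-1}(e^{ss}(\sigma\ii))$, and symmetrically for $e^s$. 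Item (4) is also immediate: $e^{ss}(\ii)\in M^o$ and $e^s(\ii)\in \overline{M^c}$ for every $\ii$, and the gap between $M^o$ and $\overline{M^c}$ is positive and uniform because $M$ is open with compact closure.

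Item (2), the sandwich between singular values and the directional norms, is then derived by choosing for each $\ii$ a basis aligned with $(e^s(\ii), e^{ss}(\ii))$: item (4) guarantees that the change-of-basis matrices to the standard basis have uniformly bounded norms and inverse norms. In this basis $A^{(n)}(\ii)$ is upper-triangular with diagonal entries $\|A^{(n)}(\ii)|e^s(\ii)\|$ and $\|A^{(n)}(\ii)|e^{ss}(\ii)\|$; together with $\alpha_1\alpha_2=|\det|$ and the invariance, this forces $\alpha_1$ and $\alpha_2$ to coincide with those diagonal norms up to multiplicative constants.

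The hard part is the cone construction: turning the quantitative gap $\alpha_1/\alpha_2\geq Ce^{n\beta}$ into an open backward-invariant region with strict inclusion requires a genuinely projective argument and is the content of the references cited. Once this is in hand, the remaining items are formal consequences of invariance and the spectral gap.
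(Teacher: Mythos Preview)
The paper does not prove this lemma: it is stated with attributions to \cite{ABY,BG,BR,Y} and used as a black box, so there is no ``paper's own proof'' to compare against. Your sketch is a faithful outline of the standard argument found in those references, and you correctly identify that the only substantive step is the construction of the backward-invariant multicone, after which items (1)--(4) are formal.

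Two small imprecisions are worth flagging. First, in your derivation of item (4) you write that ``the gap between $M^o$ and $\overline{M^c}$ is positive''; this is false, since $\partial M$ lies in both. The correct statement is that $e^{ss}(\ii)\in\bigcup_j A_j^{-1}(M)$, which is compactly contained in $M^o$, while $e^s(\ii)\in\bigcup_j A_j(\overline{M^c})$, which is compactly contained in $M^c$; it is these two compact sets that are uniformly separated. Second, in item (2) you say $A^{(n)}(\ii)$ becomes upper-triangular in the adapted basis; in fact it becomes \emph{diagonal}, but only if you use the basis $(e^s(\ii),e^{ss}(\ii))$ at the source and $(e^s(\sigma^n\ii),e^{ss}(\sigma^n\ii))$ at the target. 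Both change-of-basis matrices have uniformly bounded norm and inverse norm by item (4), and the conclusion follows. Neither point is a real gap; the argument goes through once stated precisely.
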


Let us observe that $e^s(\ii)$ depends only on $\ii_-$ and $e^{ss}(\ii_+)$ depends only on $\ii_+$, so $e^{ss}$ can be considered as a natural projection from $\Sigma^+$ to $\mathbf{P}^1$, where $\mathbf{P}^1$ denotes the projective space. In particular, $\|A^{(n)}(\ii)|e^s(\ii)\|$ and $\|A^{(n)}(\ii)|e^{ss}(\ii)\|$ describe the local growth in the stable/strong stable directions, and can be considered as finite time approximations of the corresponding Lyapunov exponent.

For $x,y\in\mathbf{P}^1$ denote by $\sphericalangle(x,y)$ the usual metric on $\mathbf{P}^1$, that is the angle between the subspaces corresponding to $x$ and $y$. Thus, Lemma~\ref{ldomsplit}\eqref{lpunif} can be formalized as follows, there exists a constant $C>0$ such that for every $\ii_-\in\Sigma^-$ and $\jj_+\in\Sigma^+$, $\sphericalangle(e^{ss}(\jj_+),e^s(\ii_-))>C$. In the later analysis, the dimension of strong stable directions in $\mathbf{P}^1$ plays an important role.

For any $\vv,\underline{w}\in\R^2$ denote by $\mathrm{Area}(\vv,\underline{w})$ the area of parallelogram formed by $\vv,\underline{w}$.

\begin{lemma}\label{ldistcomp}
	For every $x,y\in\mathbf{P}^1$
	$$
	\frac{\mathrm{Area}(\vv,\underline{w})}{\|\vv\|\|\underline{w}\|}\leq\sphericalangle(x,y)\leq\frac{2\mathrm{Area}(\vv,\underline{w})}{\|\vv\|\|\underline{w}\|},
	$$
	where $\vv, \underline{w}\in\R^2$ are arbitrary non-zero vectors from the subspaces corresponding to $x$ and $y$.
\end{lemma}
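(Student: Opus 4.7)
The plan is to reduce both inequalities to standard sine estimates. First I would recall the basic identity from linear algebra: for any two non-zero vectors $\vv,\underline{w}\in\R^2$, if $\theta\in[0,\pi]$ denotes the (unsigned) angle between $\vv$ and $\underline{w}$, then
$$
\mathrm{Area}(\vv,\underline{w}) \;=\; \|\vv\|\,\|\underline{w}\|\,\sin\theta.
$$
By the stated convention, $\sphericalangle(x,y)$ is the angle between the one-dimensional subspaces corresponding to $x$ and $y$, i.e.\ $\sphericalangle(x,y)=\min(\theta,\pi-\theta)\in[0,\pi/2]$. Since $\sin\theta=\sin(\pi-\theta)$, the ratio in the statement is independent of the choice of representatives $\vv\in x$, $\underline{w}\in y$ and equals
$$
\frac{\mathrm{Area}(\vv,\underline{w})}{\|\vv\|\,\|\underline{w}\|} \;=\; \sin\sphericalangle(x,y).
$$

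Writing $\phi=\sphericalangle(x,y)\in[0,\pi/2]$, the lemma now collapses to the elementary double inequality
$$
\sin\phi \;\leq\; \phi \;\leq\; 2\sin\phi \qquad (\phi\in[0,\pi/2]).
$$
The left inequality is standard (the function $\phi-\sin\phi$ vanishes at $0$ and has non-negative derivative $1-\cos\phi$). For the right inequality, consider $g(\phi):=2\sin\phi-\phi$. Then $g(0)=0$ and $g'(\phi)=2\cos\phi-1\geq 0$ for $\phi\in[0,\pi/3]$, so $g$ is non-decreasing on $[0,\pi/3]$; on $[\pi/3,\pi/2]$ one checks $g(\pi/3)=\sqrt{3}-\pi/3>0$ and $g(\pi/2)=2-\pi/2>0$, while $g$ is concave there, giving $g\geq 0$ on the whole interval. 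Equivalently, the function $\sin\phi/\phi$ is decreasing on $(0,\pi/2]$ with minimum value $2/\pi>1/2$.

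There is no real obstacle here; the statement is a quantitative comparison between the chord and arc lengths on the unit circle, so the whole proof is just the observation above combined with the elementary sine bounds. The only thing one has to be attentive to is the convention that $\sphericalangle$ takes values in $[0,\pi/2]$, which is why the $\sin\theta=\sin(\pi-\theta)$ symmetry is needed to make the ratio well defined independently of orientation of the representing vectors.
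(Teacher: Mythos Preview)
Your proof is correct and is exactly the straightforward argument the paper has in mind: the paper does not spell out any details, stating only that ``the proof of the lemma is straightforward,'' and your reduction via $\mathrm{Area}(\vv,\underline{w})/(\|\vv\|\|\underline{w}\|)=\sin\sphericalangle(x,y)$ together with the elementary bounds $\sin\phi\le\phi\le 2\sin\phi$ on $[0,\pi/2]$ is the natural way to see it.
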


The proof of the lemma is straightforward.

\begin{lemma}\label{lstrongholder}
	There exists a constant $C>0$ such that for every $\ii,\jj\in\Sigma$
	$$
	\sphericalangle(e^{ss}(\ii),e^{ss}(\jj))\leq Ce^{-\beta(\ii_+\wedge\jj_+)}\text{\ and\ }	\sphericalangle(e^{s}(\ii),e^{s}(\jj))\leq Ce^{-\beta(\ii_-\wedge\jj_-)}
	$$
	where $\beta$ is the domination exponent in Definition~\ref{ddomsplit}. Thus, the maps $\ii_+\in\Sigma^+\mapsto e^{ss}(\ii_+)$ and $\ii_-\in\Sigma^-\mapsto\log\|A_{i_{-1}}|e^s(\sigma_-\ii_-)\|$ are H\"older continuous.
\end{lemma}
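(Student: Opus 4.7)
The plan is to prove the two angle estimates directly from the nested representations in Lemma~\ref{ldomsplit}(3), and then deduce H\"older continuity as a formal consequence. Fix $\ii,\jj\in\Sigma$ and set $n=\ii_+\wedge\jj_+$. Since $i_k=j_k$ for $0\le k<n$, the matrices $A^{(n)}(\ii)$ and $A^{(n)}(\jj)$ coincide; denote their common value by $B_n$. By Lemma~\ref{ldomsplit}(3), both $e^{ss}(\ii)$ and $e^{ss}(\jj)$ lie in $B_n^{-1}(M)$, so it suffices to bound the projective diameter of $B_n^{-1}(M)$ by $Ce^{-n\beta}$.

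For any non-zero $\vv,\underline{w}\in B_n^{-1}(M)$, Lemma~\ref{ldistcomp} combined with the identity $\mathrm{Area}(\vv,\underline{w})=|\det B_n|^{-1}\mathrm{Area}(B_n\vv,B_n\underline{w})$ and the crude bound $\mathrm{Area}(B_n\vv,B_n\underline{w})\le\|B_n\vv\|\,\|B_n\underline{w}\|$ yields
\[
\sphericalangle(\vv,\underline{w})\le\frac{2\,\|B_n\vv\|\,\|B_n\underline{w}\|}{\alpha_1(B_n)\,\alpha_2(B_n)\,\|\vv\|\,\|\underline{w}\|}.
\]
The decisive step is the one-sided norm bound $\|B_n\vv\|\le K\alpha_2(B_n)\|\vv\|$ valid whenever $B_n\vv\in M$. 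To obtain it, I decompose $\vv=c_s\hat e^s(\ii)+c_{ss}\hat e^{ss}(\ii)$ in unit vectors; by Lemma~\ref{ldomsplit}(1) one has $B_n\vv=c_s\lambda_s\hat e^s(\sigma^n\ii)+c_{ss}\lambda_{ss}\hat e^{ss}(\sigma^n\ii)$ with $\lambda_s=\|B_n|e^s(\ii)\|$ and $\lambda_{ss}=\|B_n|e^{ss}(\ii)\|$. The strict backward invariance $\bigcup_i A_i^{-1}(M)\subset M^o$, together with finiteness of $\mathcal{A}$, keeps $M$ at a uniform projective distance from every stable direction $e^s(\cdot)$, so $B_n\vv\in M$ forces $|c_s|\lambda_s\le K'|c_{ss}|\lambda_{ss}$. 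By Lemma~\ref{ldomsplit}(2) one has $\lambda_s\approx\alpha_1(B_n)$ and $\lambda_{ss}\approx\alpha_2(B_n)$ up to constants, and by Lemma~\ref{ldomsplit}(4) both $\|\vv\|$ and $\|B_n\vv\|/\alpha_2(B_n)$ are comparable to $|c_{ss}|$. Substituting back and invoking the dominated splitting inequality $\alpha_2(B_n)/\alpha_1(B_n)\le C^{-1}e^{-n\beta}$ delivers the required diameter estimate.

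The bound for $\sphericalangle(e^s(\ii),e^s(\jj))$ is entirely analogous: for $n=\ii_-\wedge\jj_-$, both stable directions lie in $A_{i_{-1}}\cdots A_{i_{-n}}(\overline{M^c})$, and the same singular value argument applies with the roles of $M$ and $\overline{M^c}$ and of forward and inverse matrices interchanged, using now the forward invariance $A(\overline{M^c})\subset M^c$ and the uniform separation of $\overline{M^c}$ from the strong stable directions. H\"older continuity of $\ii_+\mapsto e^{ss}(\ii_+)$ is then immediate once $\Sigma^+$ is equipped with the standard metric $d(\ii_+,\jj_+)=e^{-\ii_+\wedge\jj_+}$. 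For the map $\ii_-\mapsto\log\|A_{i_{-1}}|e^s(\sigma_-\ii_-)\|$, note that $i_{-1}=j_{-1}$ makes the outer matrix common, and $\theta\in\mathbf{P}^1\mapsto\log\|A_{i_{-1}}|\theta\|$ is Lipschitz because $A_{i_{-1}}$ is invertible; composing this with the H\"older estimate just proved for $e^s$ applied to $\sigma_-\ii_-$ and $\sigma_-\jj_-$ finishes the argument. The main technical obstacle is the one-sided norm bound $\|B_n\vv\|\le K\alpha_2(B_n)\|\vv\|$; once that is available, the remainder is routine bookkeeping with the singular value function and the symbolic metric.
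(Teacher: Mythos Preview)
Your argument is correct, but it takes a noticeably longer path than the paper's. The paper exploits the equivariance $e^{ss}(\ii)=B_n^{-1}e^{ss}(\sigma^n\ii_+)$ directly: with $\vv\in e^{ss}(\sigma^n\ii_+)$ and $\underline{w}\in e^{ss}(\sigma^n\jj_+)$ of unit length, Lemma~\ref{ldistcomp} gives
\[
\sphericalangle(e^{ss}(\ii),e^{ss}(\jj))\le 2\,\frac{|\det B_n^{-1}|\,\mathrm{Area}(\vv,\underline{w})}{\|B_n^{-1}|e^{ss}(\sigma^n\ii_+)\|\,\|B_n^{-1}|e^{ss}(\sigma^n\jj_+)\|},
\]
and Lemma~\ref{ldomsplit}\eqref{ldomsplit2} immediately supplies $\|B_n^{-1}|e^{ss}(\cdot)\|\ge C^{-1}\|B_n^{-1}\|$, so the right-hand side is $\le 2C^2|\det B_n^{-1}|/\|B_n^{-1}\|^2=2C^2\alpha_2(B_n)/\alpha_1(B_n)\le 2C^2 e^{-\beta n}$. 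No cone-diameter estimate or decomposition in the $(e^s,e^{ss})$ basis is needed.

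Your route---bounding the projective diameter of the entire nested multicone $B_n^{-1}(M)$ via the one-sided norm inequality $\|B_n\vv\|\le K\alpha_2(B_n)\|\vv\|$ on $B_n^{-1}(M)$---yields a slightly stronger conclusion (the whole cone shrinks, not just the two specific directions), and your justification of that inequality through the uniform separation of $e^s(\cdot)$ from $M$ is sound. The paper's advantage is economy: by working only with the equivariant directions $e^{ss}(\sigma^n\cdot)$, the required norm bound is already Lemma~\ref{ldomsplit}\eqref{ldomsplit2} verbatim, so the whole proof fits in three lines.
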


\begin{proof}
	We prove only the inequality for $e^{ss}$, for $e^s$ the argument is similar. Fix $\ii,\jj\in\Sigma$ with $\ii_+\wedge\jj_+=n$. Let $\vv\in e^{ss}(\sigma^n\ii_+)$ and $\wv\in e^{ss}(\sigma^n\jj_+)$ be arbitrary such that $\|\vv\|=\|\wv\|=1$. Then by Lemma~\ref{ldistcomp},
	\begin{multline*}
	\sphericalangle(e^{ss}(\ii),e^{ss}(\jj))\leq 2\frac{\mathrm{Area}(A_{i_0}^{-1}\cdots A_{i_{n-1}}^{-1}\vv,A_{i_0}^{-1}\cdots A_{i_{n-1}}^{-1}\wv)}{\|A_{i_0}^{-1}\cdots A_{i_{n-1}}^{-1}|e^{ss}(\sigma^{n}\ii_+)\|\|A_{i_0}^{-1}\cdots A_{i_{n-1}}^{-1}|e^{ss}(\sigma^{n}\jj_+)\|}\leq\\
	2C^2\frac{|\det(A_{i_0}^{-1}\cdots A_{i_{n-1}}^{-1})|}{\|A_{i_0}^{-1}\cdots A_{i_{n-1}}^{-1}\|^2}\mathrm{Area}(\vv,\wv)\leq 2C^2e^{-\beta n}.
	\end{multline*}
\end{proof}

Let $\varphi:\Sigma^-\mapsto\R$ be a H\"older continuous potential function. Then there exist a constants $C>0, P\in\R$ and $\sigma_-$-invariant Borel probability measures $\mu_-$ and $\mu$ on $\Sigma^-$ and $\Sigma$ such that
\begin{equation}\label{edefGibbs-}
C^{-1}\leq\dfrac{\mu_-([\ii_-|_{-n}^{-1}])}{e^{-nP+\sum_{k=0}^{n-1}\varphi(\sigma_-^k\ii_-)}}\leq C,\text{ for every $\ii_-\in\Sigma^-$,}
\end{equation}
\begin{equation}\label{edefGibbs}
C^{-1}\leq\dfrac{\mu([\ii|_{-n}^{-1}])}{e^{-nP+\sum_{k=0}^{n-1}\varphi(\sigma_-^k\ii)}}\leq C,\text{ for every $\ii\in\Sigma$.}
\end{equation}
We call the measures $\mu_-$ and $\mu$ the \textit{Gibbs measures} of the potential $\varphi$ on $\Sigma^-$ and $\Sigma$. Moreover, $\mu_-$ and $\mu$ are ergodic, see \cite[Chapter~1]{Bbook}. Let $\nu=(\pi^-)_*\mu_-$, where $\pi^-$ is defined in \eqref{enatproj}. Let us denote the projection from $\Sigma$ to $\Sigma^+$ by $p_+:\Sigma\mapsto\Sigma^+$, and similarly, the projection from $\Sigma$ to $\Sigma^-$ by $p_-:\Sigma\mapsto\Sigma^-$. It is easy to see that $(p_-)_*\mu=\mu_-$.

\begin{lemma}\label{lmu+}
	The measure $\mu_+:=(p_+)_*\mu$ is $\sigma$-invariant, ergodic quasi-Bernoulli measure on $\Sigma^+$ with entropy $h_{\mu_+}=h_{\mu}=h_{\mu_-}=P-\int\varphi(\ii)d\mu(\ii)$.
\end{lemma}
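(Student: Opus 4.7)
The plan is to transfer each property of $\mu$ on $\Sigma$ to the push-forward $\mu_+$ on $\Sigma^+$, using that $p_+$ is a factor map intertwining the shifts. Shift-invariance is immediate from $p_+\circ\sigma=\sigma\circ p_+$ and the $\sigma$-invariance of $\mu$ (which holds because $\mu$ is $\sigma_-$-invariant and $\sigma^{-1}=\sigma_-$). Ergodicity transfers similarly: if $A\subset\Sigma^+$ is $\sigma$-invariant, then $p_+^{-1}(A)\subset\Sigma$ is $\sigma$-invariant, hence has $\mu$-measure $0$ or $1$, so $\mu_+(A)\in\{0,1\}$.

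The core of the argument is the quasi-Bernoulli property. The strategy is to convert the Gibbs bound \eqref{edefGibbs}, stated for negative cylinders in $\Sigma$, into an estimate for positive cylinders (which coincide with $p_+$-preimages of cylinders in $\Sigma^+$), and then exploit near-additivity of Birkhoff sums under concatenation. For $\iv=(i_0,\ldots,i_{n-1})\in\Sigma^*$, let $[\iv]^+\subset\Sigma$ denote the cylinder whose entries at positions $0,\ldots,n-1$ match $\iv$. Applying $\sigma^n$ carries $[\iv]^+$ bijectively onto the negative cylinder with the entries of $\iv$ placed at positions $-n,\ldots,-1$. Fix any reference sequence $\omega\in\Sigma$ and let $\tilde\ii(\iv)\in\Sigma$ be the sequence putting $\iv$ at positions $-n,\ldots,-1$ and agreeing with $\omega$ elsewhere. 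Then by $\sigma$-invariance of $\mu$ and \eqref{edefGibbs},
$$\mu_+([\iv])=\mu([\iv]^+)\asymp\exp\Bigl(-nP+\sum_{k=0}^{n-1}\varphi(\sigma_-^k\tilde\ii(\iv))\Bigr).$$
For a concatenation $\iv\jv$ with $|\iv|=m$ and $|\jv|=n$, a coordinate-by-coordinate comparison of $\tilde\ii(\iv\jv)$ with $\tilde\ii(\jv)$ on the first $n$ terms of the Birkhoff sum and with $\tilde\ii(\iv)$ (after a shift by $n$) on the last $m$ terms, combined with Hölder continuity of $\varphi$, bounds the total discrepancy by a geometric series $\sum_{\ell\ge 0}Ce^{-\alpha\ell}=O(1)$ uniformly in $\iv,\jv$. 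Hence $\mu_+([\iv\jv])\asymp\mu_+([\iv])\mu_+([\jv])$, giving the quasi-Bernoulli property.

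For the entropy, the one-symbol partition $\{[i]_0\}$ is a generator for each of the three shifts, so by Kolmogorov-Sinai
$$h_{\mu_+}(\sigma)=\lim_{n\to\infty}-\frac{1}{n}\sum_{|\iv|=n}\mu_+([\iv])\log\mu_+([\iv]),$$
with analogous formulas for $h_\mu$ and $h_{\mu_-}$. Since $\mu_+([\iv])=\mu([\iv]^+)=\mu([\iv]^-)=\mu_-([\iv]^-)$ by $\sigma$-invariance of $\mu$ and $(p_-)_*\mu=\mu_-$, the three sums coincide termwise, yielding $h_{\mu_+}=h_\mu=h_{\mu_-}$. The variational identity follows from Shannon-McMillan-Breiman and Birkhoff's theorem applied to the ergodic measure $\mu$: for $\mu$-a.e.\ $\ii$,
$$h_\mu=\lim_{n\to\infty}-\frac{1}{n}\log\mu([\ii|_{-n}^{-1}])=P-\lim_{n\to\infty}\frac{1}{n}\sum_{k=0}^{n-1}\varphi(\sigma_-^k\ii)=P-\int\varphi\,d\mu,$$
the middle equality coming from \eqref{edefGibbs} after taking logarithms and dividing by $n$.

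The main obstacle is really just careful bookkeeping between the left shift $\sigma$ and the right shift $\sigma_-=\sigma^{-1}$, and between positive- and negative-indexed cylinders. Once this translation dictionary is set up, Hölder continuity of $\varphi$ and the standard thermodynamic formalism do the rest of the work.
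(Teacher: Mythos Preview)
Your proof is correct and follows the same overall architecture as the paper: invariance and ergodicity via the factor map, quasi-Bernoulli via the Gibbs bounds after shifting positive cylinders to negative ones, and entropy via Shannon--McMillan--Breiman and Birkhoff. The one substantive difference is in the quasi-Bernoulli step. You fix a single reference tail $\omega$, build $\tilde\ii(\iv)$ for each word, and then invoke H\"older continuity of $\varphi$ to control the mismatch between the Birkhoff sums for $\tilde\ii(\iv\jv)$ and those for $\tilde\ii(\iv)$, $\tilde\ii(\jv)$. The paper instead lets the reference point vary with the word: it applies \eqref{edefGibbs} at $\jj$ for the full word, splits the Birkhoff sum \emph{exactly} as $\sum_{k=0}^{n}\varphi(\sigma_-^k\jj)+\sum_{k=0}^{m}\varphi(\sigma_-^k(\sigma_-^{n+1}\jj))$, and then applies \eqref{edefGibbs} again at $\jj$ and at $\sigma_-^{n+1}\jj$ for the two pieces. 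Because the same point is used throughout, there is no discrepancy to estimate and H\"older continuity is never invoked explicitly; the constant is just $C^3$. Your route is perfectly valid and perhaps more transparent about where H\"older continuity enters, while the paper's trick of re-applying the Gibbs inequality at a shifted point is a little slicker. Your entropy argument (matching cylinder measures termwise, then Shannon--McMillan--Breiman plus Birkhoff) is in fact cleaner than the paper's, which only writes out one inequality.
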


We call a measure $m$ on $\Sigma^+$ quasi-Bernoulli, if there exists a uniform contant $C>0$ such that for every $\iv,\jv\in\Sigma^*$
$$
C^{-1}\nu([\iv])\nu([\jv])\leq\nu([\iv\jv])\leq C\nu([\iv])\nu([\jv]),
$$
where $\iv\jv$ is the concatenation of $\iv$ and $\jv$.

\begin{proof}
	First, we prove invariance. Let $A\subseteq\Sigma^+$ be measurable set. Then by using that $\mu$ is $\sigma$-invariant we get
	$$
	\mu_+(\sigma^{-1}A)=\mu_+\left(\bigcup_{i=1}^NiA\right)=\mu\left(\Sigma^-\times\bigcup_{i=1}^NiA\right)=\mu(\Sigma^-\times A)=\mu_+(A).
	$$
	Let $A\subseteq\Sigma^+$ be an arbitrary $\sigma$-invariant subset of $\Sigma^+$. Then $\sigma^{-1}\Sigma^-\times A=\Sigma^{-}\times\left(\bigcup_{i=1}^NiA\right)=\Sigma^-\times\sigma^{-1}A=\Sigma^-\times A$. Therefore, $\mu(\Sigma^-\times A)=0\text{ or }1$, which implies the ergodicity of $\mu_+$.
	
	Finally, let $(i_0,\dots,i_{n+m+1})\in(\Sigma^+)^*$ be arbitrary and let $\jj\in\Sigma^-$ be such that $j_{-1}=i_{n+m+1},\dots,$ $j_{-(n+m+2)}=i_0$. Then by \eqref{edefGibbs}
	\begin{eqnarray*}
	&&\mu_+([i_0,\dots,i_{n+m+1}])=\mu(\Sigma^-\times[i_0,\dots,i_{n+m+1}])=\\
&&\mu([\jj|_{-(n+m+2)}^{-1}])\leq Ce^{\scalebox{1}{$-(n+m+2)P+\sum_{k=0}^{n+m+1}\varphi(\sigma_-^k\jj)$}}= \\
	 &&Ce^{\scalebox{1}{$-(n+1)P+\sum_{k=0}^{n}\varphi(\sigma_-^k\jj)$}}e^{\scalebox{1}{$-(m+1)P+\sum_{k=0}^{m}\varphi(\sigma_-^k(\sigma_-^{n+1}\jj))$}}\leq \\
	&&C^{3}\mu([\jj|_{-(n+1)}^{-1}])\mu([\sigma_-^{n+1}\jj|_{-(m+1)}^{-1}])=C^{3}\mu(\Sigma^-\times[i_0,\dots,i_n])\mu(\Sigma^-\times[i_{n+1},\dots,i_{n+m+1}])=\\
&&C^{3}\mu_+([i_0,\dots,i_n])\mu_+([i_{n+1},\dots,i_{n+m+1}]).
	\end{eqnarray*}

	The inequality $\mu_+([i_0,\dots,i_{n+m+1}])\geq C^{-3}\mu_+([i_0,\dots,i_n])\mu_+([i_{n+1},\dots,i_{n+m+1}])$ can be proven similarly. By using the definition of entropy, see \cite[Theorem~4.10, Theorem~4.18]{W},
	\begin{multline*}
	h_{\mu_+}=\lim_{n\rightarrow\infty}-\frac{1}{n}\sum_{\iv\in\mathcal{S}^n}\mu_+([\iv])\log\mu_+([\iv])\leq P- \lim_{n\rightarrow\infty}\frac{1}{n}\sum_{\iv\in\mathcal{S}^n}\mu_+([\iv])\varphi(\icv\jj)=\\
	P- \lim_{n\rightarrow\infty}\frac{1}{n}\sum_{\iv\in\mathcal{S}^n}\mu_-([\iv])\varphi(\iv\jj)=P-\int\varphi(\ii)d\mu(\ii).
	\end{multline*}
\end{proof}

By Oseledec's multiplicative ergodic theorem, there are constants $0<\chi_{\mu}^s\leq\chi_{\mu}^{ss}$ that
\begin{eqnarray*}
&&\lim_{n\rightarrow\infty}-\frac{1}{n}\log\alpha_1(A_{i_0}\cdots A_{i_{n-1}})=\chi_{\mu}^s\text{ and}\\
&&\lim_{n\rightarrow\infty}-\frac{1}{n}\log\alpha_2(A_{i_0}\cdots A_{i_{n-1}})=\chi_{\mu}^{ss}\text{ for $\mu$-a.e. $\ii\in\Sigma$ ( or $\mu_+$-a.e $\ii_+\in\Sigma^+$).}		
\end{eqnarray*}
We call the values $\chi_{\mu}^s$ the \textit{stable} and $\chi_{\mu}^{ss}$ the \textit{strong stable Lyapunov exponent} of $\mu$. We define the Lyapunov exponents for $\mu_-$ similarly.

Now we define the H\"older continuous potential function and the corresponding Gibbs measure motivated by the singular value function. This measure is our candidate to be the dimension maximizing measure.

\begin{definition}\label{dKaenmaki}
	Let $\Alpha=\left\{A_1, A_2,\dots, A_N\right\}$ be a finite set of contracting, non-singular $2\times2$ matrices such that $\Alpha$ satisfies the dominated splitting. Moreover, let $s_0=s_0(\mathcal{A})$ be the unique root of the subadditive pressure \eqref{esap}. We define $\varphi:\Sigma^-\mapsto\R$ be H\"older continuous potential function as follows,
	\begin{equation}\label{eKaenmakipotential}
		\varphi(\ii_-)=\left\{\begin{array}{cc}
		\log\|A_{i_{-1}}|e^s(\sigma_-\ii_-)\|^{s_0} & \text{ if $0\leq s_0\leq1$,}\\
		\log\left(|\det A_{i_{-1}}|^{s_0-1}\|A_{i_{-1}}|e^s(\sigma_-\ii_-)\|^{2-s_0}\right) & \text{ if $1< s_0<2$.}
		\end{array}\right.
	\end{equation}
	Then we call the Gibbs measure $\mu^K$ with potential $\varphi$ the \textnormal{K\"aenm\"aki measure} on $\Sigma^-$. In particular, there exists a constant $C>0$ such that
	\begin{equation*}
	C^{-1}\leq\dfrac{\mu^K([\ii_-|_{-n}^{-1}])}{\phi^{s_0}(A_{i_{-1}}\cdots A_{i_{-n}})}\leq C,\text{ for every $\ii_-\in\Sigma^-$,}
	\end{equation*}
	where $\phi^s$ is the singular value function \eqref{esvf}.
\end{definition}

Observe that $\mathrm{exp}(\sum_{k=0}^{n-1}\varphi(\sigma^n_-\ii_-))$ is essentially $\phi^{s_0}(A_{i_{-1}}\cdots A_{i_{-n}})$ (defined in \eqref{esvf}), where $s_0$ is the unique root of the subadditive pressure function \eqref{esap}. That is by Lemma~\ref{ldomsplit}, if $s_0\leq1$ then for every $n\geq1$, $\phi^{s_0}(A_{i_{-1}}\cdots A_{i_{-n}})\approx \|A_{i_{-1}}\cdots A_{i_{-n}}|e^s(\sigma_-^n\ii_-)\|^{s_0}=\mathrm{exp}(\sum_{k=0}^{n-1}\varphi(\sigma^n_-\ii_-))$. On the other hand, if $1<s_0<2$ then
\begin{multline*}
\phi^{s_0}(A_{i_{-1}}\cdots A_{i_{-n}})=\alpha_1(A_{i_{-1}}\cdots A_{i_{-n}})\alpha_2((A_{i_{-1}}\cdots A_{i_{-n}}))^{s_0-1}=\\
\left(\alpha_1(A_{i_{-1}}\cdots A_{i_{-n}})\alpha_2((A_{i_{-1}}\cdots A_{i_{-n}}))\right)^{s_0-1}\alpha_1(A_{i_{-1}}\cdots A_{i_{-n}})^{2-s_0}\approx\\
\det(A_{i_{-1}}\cdots A_{i_{-n}})^{s_0-1}\|A_{i_{-1}}\cdots A_{i_{-n}}|e^s(\sigma_-^n\ii_-)\|^{2-s_0}=\mathrm{exp}(\sum_{k=0}^{n-1}\varphi(\sigma^n_-\ii_-)).
\end{multline*}
The H\"older continuity of potential $\varphi$ in \eqref{eKaenmakipotential} follows by Lemma~\ref{lstrongholder}. Basically, the dominated splitting condition (Definition~\ref{ddomsplit}) allows us to show that the potential $\varphi$ is H\"older, hence the measure $\mu^K$ is Gibbs. Without dominated splitting the map $\ii \mapsto \log \|A_{i_{-1}}|e^s(\sigma_-\ii)\|$ is in general only measureable (by Oseledec Theorem).

\section{Ledrappier-Young formula for Gibbs measures}\label{sLY}

In this section, we extend the result \cite[Theorem~2.7]{B} for Gibbs-measures. For every $\theta\in\mathbf{P}^1$ we denote the orthogonal projection in the direction of $\theta$ by $\proj_{\theta}$. Let us define the transversal measure for every $\ii_+\in\Sigma^+$ by $\nu^T_{\ii_+}=\nu\circ(\pr{\ii_+})^{-1}$. That is, $\nu^T_{\ii_+}$ denotes the orthogonal projection of the measure $\nu$ along the line $e^{ss}(\ii_+)$.

\begin{theorem}\label{tLYGibbs}
	Let $\Alpha=\left\{A_1, A_2,\dots, A_N\right\}$ be a finite set of contracting, non-singular $2\times2$ matrices, and let $\Phi=\left\{f_i(\xv)=A_i\xv+\tv_i\right\}_{i=1}^N$ be an iterated function system on the plane with affine mappings. Let $\mu_-$ be a right-shift invariant and ergodic Gibbs measure on $\Sigma^{-}$ defined in \eqref{edefGibbs-}, and $\nu=(\pi^-)_*\mu_-$ be the push-down measure of $\mu_-$. If
	\begin{enumerate}
		\item $\Alpha$ satisfies the dominated splitting,
		\item $\Phi$ satisfies the strong separation condition
	\end{enumerate} then $\nu$ is exact dimensional and
	\begin{equation*}
	\dim_H\nu=\frac{h_{\mu}}{\chi^s_{\mu}}+\left(1-\frac{\chi^{s}_{\mu}}{\chi^{ss}_{\mu}}\right)\dim_H\nu^{T}_{\ii_+}\text{ for $\mu_+$-almost every $\ii_+\in\Sigma^+$.}
	\end{equation*}
\end{theorem}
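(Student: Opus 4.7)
The plan is to follow the approach of B\'ar\'any~\cite[Theorem~2.7]{B}, where the same type of Ledrappier--Young formula is established for self-affine Bernoulli measures, and to adapt it to the Gibbs setting. The product structure of Bernoulli measures is replaced by the quasi-Bernoulli inequality from Lemma~\ref{lmu+} together with the Shannon--McMillan--Breiman theorem, which still yield precise enough exponential asymptotics $\mu_-([\iv]) \approx e^{-nh_\mu}$ on cylinder masses for the geometric arguments of \cite{B} to carry over with only cosmetic changes.

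First I would settle exact dimensionality. The measure $\mu$ is ergodic and $\sigma$-invariant on $\Sigma$, and by the SSC the natural projection $\pi^-$ is essentially Lipschitz, so a Feng--Hu type theorem on push-downs of ergodic invariant measures gives that $\nu$ is exact dimensional; write $d := \dim_H \nu$. The H\"older continuity of $\ii_+ \mapsto e^{ss}(\ii_+)$ from Lemma~\ref{lstrongholder}, combined with the ergodicity of $\mu_+$ established in Lemma~\ref{lmu+}, also gives that $d^T := \dim_H \nu^T_{\ii_+}$ is $\mu_+$-almost surely constant, so that the right-hand side of the formula is well-defined.

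The core computation is a two-scale counting of cylinders meeting a ball $B(x,r)$. For small $r>0$ and $\mu_-$-typical $\ii_-$, choose $n=n(r)$ so that $\alpha_2(A_{i_{-1}}\cdots A_{i_{-n}}) \approx r$; by Oseledec, $n \approx -\log r/\chi^{ss}_\mu$ and $\alpha_1 \approx r^{\chi^s_\mu/\chi^{ss}_\mu}$. The $n$-cylinder through $x := \pi^-(\ii_-)$ projects to an ellipse with axes along $e^s(\ii_-)$ and $e^{ss}(\ii_+)$ (uniformly transverse by Lemma~\ref{ldomsplit}\eqref{lpunif}, and nearly parallel across neighbouring cylinders by Lemma~\ref{lstrongholder}); the number of $n$-cylinders whose projection $\pr{\ii_+}$ lies in an interval of length $\alpha_1$ is recorded by $\nu^T_{\ii_+}$ at scale $\alpha_1$, being approximately $\nu^T_{\ii_+}(I_{\alpha_1})/e^{-nh_\mu}$. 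Combining this count with the Gibbs bound~\eqref{edefGibbs} and the Oseledec asymptotics for $\alpha_1, \alpha_2$ yields $\log \nu(B(x,r))/\log r \to d$ with $d$ equal to the expression stated in the theorem. Both directions of the bound rely on the SSC to convert geometric intersection into symbolic cylinder counting, and on the transversality of strong-stable directions to rule out excessive overlap of the projected cylinders.

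The main obstacle is the lack of a product structure for Gibbs measures. In the Bernoulli case of \cite{B}, the restriction of $\mu_-$ to a cylinder $[\iv]$ is literally an affine copy of the whole measure, and the two-scale counting is transparent. For Gibbs measures the corresponding identification only holds up to the Gibbs constant $C$, and the entropy contribution inside a cylinder is controlled only on average via SMB rather than cylinder-by-cylinder. This forces an Egorov-type argument to upgrade the almost-sure cylinder-mass estimates into uniform estimates on a set of arbitrarily large $\mu_-$-measure. A subsidiary technical point is moving between estimates on $\Sigma$, $\Sigma^-$, and $\Sigma^+$ via $p_\pm$; this is handled using $(p_-)_*\mu = \mu_-$ together with Lemma~\ref{lmu+}, which ensures that the Lyapunov and entropic quantities computed on the three symbolic spaces all agree.
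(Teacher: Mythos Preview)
Your high-level plan---adapt \cite[Theorem~2.7]{B} to Gibbs measures by replacing the Bernoulli product structure with a suitable surrogate---matches the paper's, but the execution you sketch diverges from what the paper actually does, and the sketch contains at least one unclear step.

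The paper does \emph{not} run a direct two-scale ball-counting argument. It keeps the full Ledrappier--Young machinery from \cite{B}: the skew dynamics $F$ on $\Lambda\times\Sigma^+$, the Rokhlin decomposition of $\widehat\nu$ along the strong-stable foliation $\xi^{ss}$, and the conditional entropy $H(F\xi^{ss}\mid\xi^{ss})$. Of the four propositions in \cite{B}, three carry over verbatim to ergodic measures; only \cite[Proposition~3.3]{B} (exact dimensionality of the transversal measure) needs to be redone. The single Gibbs-specific input is Lemma~\ref{lequiv}: the two-sided Gibbs measure satisfies $C^{-1}\mu_-\times\mu_+\le\mu\le C\mu_-\times\mu_+$, hence the conditional measures $\mu_{\ii_+}$ are \emph{uniformly} equivalent to $\mu_-$. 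This is strictly stronger than quasi-Bernoulli plus SMB and eliminates the need for any Egorov argument: the constant $C$ is absorbed into a telescoping product, and Maker's ergodic theorem handles the averaging. Your proposed replacement (SMB plus Egorov) is in principle workable but is a weaker tool doing more work.

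Your core counting paragraph is where I lose the thread. With $n$ chosen so that $\alpha_2\approx r$, each level-$n$ cylinder projects under $\mathrm{proj}_{e^{ss}(\ii_+)}$ to an interval of length $\approx\alpha_1$, and the ball $B(x,r)$ projects to an interval of length $\approx r\ll\alpha_1$; so ``the number of $n$-cylinders whose projection lies in an interval of length $\alpha_1$'' is $O(1)$, not $\nu^T_{\ii_+}(I_{\alpha_1})/e^{-nh_\mu}$. To extract the transversal dimension from a ball of radius $r$ you need to measure how mass is distributed \emph{within} a single long cylinder along its length, which is precisely the role of the conditional measures $\widehat\nu^{ss}_{\mathbf y}$ and the conditional entropy---the objects your sketch bypasses. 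Likewise, the constancy of $\dim_H\nu^T_{\ii_+}$ does not follow from H\"older continuity of $e^{ss}$ and ergodicity alone: the function $\ii_+\mapsto\dim_H\nu^T_{\ii_+}$ is not obviously $\sigma$-invariant, since $e^{ss}(\sigma\ii_+)$ and $e^{ss}(\ii_+)$ are different directions; in the paper constancy is a by-product of the explicit formula $(h_\mu-H(F\xi^{ss}\mid\xi^{ss}))/\chi^s_\mu$ obtained via the $F$-dynamics.

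In short: the paper's route is to prove Lemma~\ref{lequiv}, then rerun \cite[Proposition~3.3]{B} with that lemma supplying uniform product bounds and Maker's theorem controlling the ergodic average. Your route is plausible in spirit but, as written, does not correctly identify where the transversal dimension enters and would need the conditional-measure apparatus anyway to close the gap.
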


During the proof of Theorem~\ref{tLYGibbs}, we follow the proof of \cite[Theorem~2.7]{B}. The proof of \cite[Theorem~2.7]{B} is decomposed into four propositions \cite[Proposition~3.1, Proposition~3.3, Proposition~3.8 and Proposition~3.9]{B}. However, \cite[Proposition~3.1]{B} and \cite[Proposition~3.9]{B} hold for general ergodic measures. On the other hand, \cite[Proposition~3.8]{B} follows from \cite[Proposition~3.3]{B} exactly in the same way for Gibbs measures as for Bernoulli measures. So, we extend in the rest of the section \cite[Proposition~3.3]{B} for Gibbs measures.

Let $F$ be the dynamical system defined in \cite[Section~3]{B} acting on $\clo\times\Sigma^{+}$. Namely,
\begin{equation*} 
F(\xv,\ii):=(f_{i_0}(\xv),\sigma\ii),
\end{equation*}
where $\mathcal{O}$ is an open and bounded set such that
$$
\bigcup_{i=1}^Nf_i(\mathcal{O})\subseteq\mathcal{O}\text{ and }f_i(\overline{\mathcal{O}})\cap f_j(\overline{\mathcal{O}})=\emptyset\text{ for $i\neq j$.}
$$
Since $F$ is a hyperbolic map acting $\clo\times\Sigma^{+}$, its unique non-empty and compact $F$-invariant set is $\bigcap_{n=0}^{\infty}F^n(\clo\times\Sigma^+)=\Lambda\times\Sigma^+$. It is easy to see that $F$ is conjugate to $\sigma$ by the projection $\pi:\Sigma\mapsto\Lambda\times\Sigma^{+}$, where $\pi(\ii):=(\pi^-(\ii_-),\ii_+)$. That is,
$\pi\circ\sigma=F\circ\pi.$ Denote the measure $\pi_*\mu$ by $\widehat{\nu}$. Then $\widehat{\nu}$ is $F$-invariant ergodic measure.

Since $e^{ss}$ depends only on $\ii_+$, it defines a foliation on $\clo$ for every $\ii_+\in\Sigma^+$. Hence, it defines a foliation $\xi^{ss}$ on $\Lambda\times\Sigma^+$. Namely, for a $\y=(\xv,\ii_+)\in\Lambda\times\Sigma^+$ let $l_{ss}(\y)$ be the line through $\xv$ parallel to $e_{ss}(\ii_+)$ on $\R^2\times\left\{\ii_+\right\}$. Let the partition element $\xi^{ss}(\y)$ be the intersection of the line $l_{ss}(\y)$ with $\Lambda\times\left\{\ii_+\right\}$. Denote by $F\xi^{ss}$ the image of the partition $\xi^{ss}$ under $F$, i.e. for every $\y$, $(F\xi^{ss})(\y)=F(\xi^{ss}(F^{-1}(\y)))$. It is easy to see that $F\xi^{ss}$ is a refinement of $\xi^{ss}$, that is, for every $\y$, $(F\xi^{ss})(\y)\subset\xi^{ss}(\y)$.

We decompose the measure $\widehat{\nu}$ on $\Lambda\times\Sigma^+$according to two different partitions. First, we construct a family of measures supported on $\Lambda$. More precisely, supported on $\Lambda\times\{\ii_+\}$ for $\mu_+$-a.e. $\ii_+$. So, applying Rokhlin's Theorem \cite{R}, for $\mu_+$-a.e. $\ii_+\in\Sigma^+$ there exists a uniquely defined system of conditional measures $\mu_{\ii_+}$ up to a set of zero measure, supported on $\Sigma^-\times\left\{\ii_+\right\}$ and
\begin{equation*}
\mu(A)=\int\mu_{\ii_+}(A)d\mu_+(\ii_+).
\end{equation*}
By defining $\widehat{\nu}_{\ii_+}:=(\pi^-)_*\mu_{\ii_+}$, we get
$$
\widehat{\nu}=\int\widehat{\nu}_{\ii_+}d\mu_+(\ii_+).
$$
In the focus of our study stand the geometric measure theoretical properties of the family of measures $\widehat{\nu}_{\ii_+}$ along the strong stable directions. Therefore, first we define the transversal measure, i.e. for $\mu_+$-a.e. $\ii_+\in\Sigma^+$, let $\widehat{\nu}^T_{\ii_+}$ be the orthogonal projection of $\widehat{\nu}_{\ii_+}$ along the subspace $e^{ss}(\ii_+)$. That is, $$\widehat{\nu}^T_{\ii_+}:=(\pr{\ii_+})_*\widehat{\nu}_{\ii_+}.$$

On the other hand, we need the conditional measures of $\widehat{\nu}_{\ii_+}$ along the subspace $e^{ss}(\ii_+)$. Applying Rokhlin's Theorem \cite{R} again, there exists a canonical system of conditional measures, i.e. for $\widehat{\nu}$-a.e. $\y\in\Lambda\times\Sigma^+$ there exists a measure $\widehat{\nu}^{ss}_{\y}$ supported on $\xi^{ss}(\y)$ such that the measures are uniquely defined up to a zero measure set of $\y$ and for every measurable set $A$ the function $\y\mapsto\widehat{\nu}_{\y}^{ss}(A)$ is measurable. Moreover,
\begin{equation}\label{econdmeasure}
\widehat{\nu}(A)=\int\widehat{\nu}_{\y}^{ss}(A)d\widehat{\nu}(\y).
\end{equation}
By the uniqueness of the conditional measures, we get that the measure $\widehat{\nu}^{ss}_{\y}$ is conditional measure of $\widehat{\nu}_{\ii_+}$, namely,
$$
\widehat{\nu}_{\ii_+}=\int\widehat{\nu}_{(\xv,\ii_+)}^{ss}d\widehat{\nu}^T_{\ii_+}(\xv)\text{ for $\mu_+$-a.e. $\ii_+\in\Sigma^+$.}
$$

Let us define the conditional entropy of $F\xi^{ss}$ with respect to $\xi^{ss}$ in the usual way,
\begin{equation*} 
H(F\xi^{ss}|\xi^{ss}):=-\int\log\widehat{\nu}_{\y}^{ss}((F\xi^{ss})(\y))d\widehat{\nu}(\y).
\end{equation*}

One of the main goals of this paper is to show that there is a dimension maximizing Gibbs measure for self-affine sets. However, our method allows us only to handle the dimension of the conditional measures $\mu_{\ii_+}$. The next lemma is devoted to show that $\mu_{\ii_+}$ is not necessarily equal to but equivalent with a Gibbs measure on $\Sigma^-$.

\begin{lemma}\label{lequiv}
	There exists a constant $C>0$ such that $C^{-1}\mu_-\times\mu_+\leq\mu\leq C\mu_-\times\mu_+$. In particular,
	\begin{equation}\label{eeqvcond}
	C^{-1}\mu_-\leq\mu_{\ii_+}\leq C\mu_-\text{ for $\mu_+$-a.e. $\ii_+\in\Sigma^+$.}
	\end{equation}
\end{lemma}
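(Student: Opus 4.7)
The plan is to establish the product estimate $\mu\asymp\mu_-\times\mu_+$ on the generating family of rectangle cylinders $[\ii_-|_{-n}^{-1}]\times[\ii_+|_0^{m-1}]=[\ii|_{-n}^{m-1}]$ with uniform constants, from which the disintegration statement \eqref{eeqvcond} follows immediately by the $\mu_+$-a.s.\ uniqueness of conditional measures: integrating $\mu_{\ii_+}(A)$ against $\mu_+$ on any measurable $B\subset\Sigma^+$ yields $\mu(A\times B)$, which will be sandwiched between constant multiples of $\mu_-(A)\mu_+(B)$, forcing $C^{-1}\mu_-(A)\leq\mu_{\ii_+}(A)\leq C\mu_-(A)$ for $\mu_+$-a.e.\ $\ii_+$. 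The cylinder estimate extends to arbitrary Borel sets by a routine monotone class argument.

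The central computation is to write all three measures as exponentials of a single Birkhoff sum, up to $O(1)$ H\"older error. By $\sigma$-invariance of $\mu$, shift the cylinder $[\ii|_{-n}^{m-1}]$ by $\sigma^{m}$ to obtain $[\ii'|_{-(n+m)}^{-1}]$, where $\ii'_{\ell}=\ii_{\ell+m}$ for $-(n+m)\leq\ell\leq-1$; the Gibbs bound \eqref{edefGibbs} then gives
\[ \mu([\ii|_{-n}^{m-1}])\asymp\exp\Bigl(-(n+m)P + S_m\varphi(\ii') + S_n\varphi(\sigma_-^m\ii')\Bigr). \]
Because the tail $\sigma_-^m\ii'$ agrees with $\ii_-$ on coordinates $-1,\dots,-n$, summing the geometric H\"older bound from Lemma~\ref{lstrongholder} gives $|S_n\varphi(\sigma_-^m\ii')-S_n\varphi(\ii_-)|\leq C_0$ independently of $n,m$, so by \eqref{edefGibbs-} the second factor is comparable to $\mu_-([\ii_-|_{-n}^{-1}])$. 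Repeating the same $\sigma^m$-shift exactly as in the proof of Lemma~\ref{lmu+} expresses $\mu_+([\ii_+|_0^{m-1}])\asymp\exp(-mP+S_m\varphi(\ii''))$ for some $\ii''\in\Sigma^-$ whose coordinates $-1,\dots,-m$ coincide with those of $\ii'$ (both encode the reversed positive block $\ii_0,\dots,\ii_{m-1}$). A second H\"older estimate bounds $|S_m\varphi(\ii')-S_m\varphi(\ii'')|$ uniformly in $m$, and multiplying the three comparisons produces the desired sandwich.

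The only delicate point is the uniformity of these two H\"older bounds: each reduces to summing a geometric series $\sum_{k}Ce^{-\alpha k}$ across an overlapping portion of two sequences that agree on a prescribed initial block, controlled by the domination exponent $\beta$ supplied by Lemma~\ref{lstrongholder}. Everything else is bookkeeping; conceptually this lemma is the classical ``past and future decouple'' statement for a two-sided Gibbs measure with H\"older potential, applied in the present setting where the potential is the one furnished by Definition~\ref{dKaenmaki}.
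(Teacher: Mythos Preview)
Your argument is correct and follows essentially the same strategy as the paper: reduce to rectangle cylinders, shift the cylinder into the negative side, apply the Gibbs bound, and split the resulting Birkhoff sum into a ``past'' and a ``future'' piece comparable to $\mu_-$ and $\mu_+$ respectively. The paper's execution is slightly cleaner on one point: it works on the two-sided space $\Sigma$, where $\sigma_-^{m+1}\circ\sigma^{m+1}=\mathrm{id}$, so the Birkhoff sum splits \emph{exactly} and no H\"older correction is needed---one applies \eqref{edefGibbs-} and \eqref{edefGibbs} directly to the two pieces. Your one-sided treatment forces you to compare Birkhoff sums along sequences that agree only on a finite initial block, and you correctly bound the discrepancy by a geometric series coming from the assumed H\"older continuity of $\varphi$; this is valid but unnecessary extra work. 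Two small remarks: the H\"older continuity you use is the one \emph{assumed} on $\varphi$ just before \eqref{edefGibbs-}, not Lemma~\ref{lstrongholder} itself; and the lemma is stated for an arbitrary H\"older potential, so your final clause restricting to the potential of Definition~\ref{dKaenmaki} undersells the generality (though your argument never actually uses that specific form).
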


\begin{proof}
	It is enough to show that there exists a $C>0$ such that for every $\ii\in\Sigma$ and $n,m\geq0$
	$$
	C^{-1}\mu_-([\ii|_{-n}^{-1}])\mu_+([\ii|_0^m])\leq\mu([\ii|_{-n}^m])\leq C\mu_-([\ii|_{-n}^{-1}])\mu_+([\ii|_0^m]).
	$$
	Indeed, every set $A$ in the $\sigma$-algebra can be approximated by cylinder sets. By the definition of Gibbs measure $\mu$
	\begin{multline*}
	\mu([\ii|_{-n}^m])=\mu([\sigma^{m+1}\ii|_{-(n+m+1)}^{-1}])\leq C e^{-(n+m+1)P+\sum_{k=0}^{n+m}\varphi(\sigma_-^k\sigma^{m+1}\ii)}=\\
	C e^{-nP+\sum_{k=0}^{n-1}\varphi{\sigma_-^k\ii}}e^{-(m+1)P+\sum_{k=0}^m\varphi(\sigma_-^k\sigma^{m+1}\ii)}\leq C^2\mu_-([\ii|_{-n}^{-1}])\mu([\sigma^{m+1}\ii|_{-(m+1)}^{-1}])=\\
	C^2\mu_-([\ii|_{-n}^{-1}])\mu([\ii|_{0}^{m}])=C^2\mu_-([\ii|_{-n}^{-1}])\mu_+([\ii|_{0}^{m}]).
	\end{multline*}
	The other inequality can be proven similarly. The relation \eqref{eeqvcond} follows by the fact that the conditional measures are uniquely defined up to a set of zero measure.
\end{proof}

By Lemma~\ref{lequiv}, the measures $\widehat{\nu}_{\ii_+}$ and $\nu$ are equivalent for $\mu_+$-a.e. $\ii_+\in\Sigma^+$. Similarly,  the measures $\widehat{\nu}_{\ii_+}^T$ and $\nu_{\ii_+}^T$ are equivalent for $\mu_+$-a.e. $\ii_+\in\Sigma^+$.

For the examination of the local dimension of the projected measure, instead of looking at balls on lines we introduce the transversal stable balls associated to the projection. Let $B^t_r(\xv,\ii)$ be transversal stable ball with radius $r$, i.e $$B^t_r(\xv,\ii)=\left\{(\yv,\jj):\ii=\jj\ \&\ \mathrm{dist}(l_{ss}(\xv,\ii),l_{ss}(\yv,\jj))\leq2r\right\},$$
where $l_{ss}(\xv,\ii)$ denotes the line through $\xv$ parallel to $e_{ss}(\ii)$. Here, $\mathrm{dist}(.,.)$ is the usual Euclidean distance between parallel lines.

For technical reasons, we also have to introduce the modified transversal stable ball. Since the IFS $\Phi$ satisfies the SSC, for an $\y=(\xv,\ii)\in\Lambda\times\Sigma^+$ we can define the stable direction $e_s(\y)$ of $\y$ by $e_s(\y):=e_s(\xv):=e_s(\ii_-)$, where $\pi_-(\ii_-)=\xv$. Denote $\mathrm{dist}_{e_s(\y)}$ the natural Euclidean distance on the subspace $e_s(\y)$.

Then for an $(\xv,\ii)\in\Lambda\times\Sigma^+$, we define the modified transversal stable ball with radius $\delta$ by
\begin{equation*}
B^T_{\delta}(\xv,\ii)=\left\{(\yv,\jj)\in\Lambda\times\Sigma^+:\ii=\jj\ \&\ \mathrm{dist}_{e_s(\xv,\ii)}(l_{ss}(\xv,\ii),l_{ss}(\yv,\jj))\leq\delta\right\},
\end{equation*}
where $\mathrm{dist}_{e_s(\xv,\ii)}(l_{ss}(\xv,\ii),l_{ss}(\yv,\jj))$ means the distance of the intersections of the lines $l_{ss}(\xv,\ii),l_{ss}(\yv,\jj)$ with the subspace $e_s(\xv,\ii)$ with respect to the distance $\mathrm{dist}_{e_s(\xv,\ii)}$. Since there exists a constant $\alpha>0$ such that
\begin{equation*}
\sphericalangle(e_s(\ii_-),e_{ss}(\ii_+))\geq\alpha>0,\text{ for every $\ii_-\in\Sigma^-$ and $\ii_+\in\Sigma^+$,}
\end{equation*}
there exists a constant $c>0$ that for every $\y\in\Lambda\times\Sigma^+$ and $r>0$
\begin{equation}\label{ecomptransballs}
B^T_{c^{-1}r}(\xv,\ii)\subseteq B^t_{r}(\xv,\ii)\subseteq B^T_{cr}(\xv,\ii).
\end{equation}

We are going to prove the following proposition.

\begin{prop}\label{pLY2}
	For $\mu_+$-a.e. $\ii_+\in\Sigma^+$ the measure $\nu^T_{\ii_+}$ is exact dimensional and $$\dim_H\nu^T_{\ii_+}=\frac{h_{\mu}-H(F\xi^{ss}|\xi^{ss})}{\chi^s_{\mu}}.$$
	In particular,
	$$
	\lim_{r\to0+}\frac{\nu(B^T_{r}(\xv,\ii_+))}{\log r}=\frac{h_{\mu}-H(F\xi^{ss}|\xi^{ss})}{\chi^s_{\mu}}\text{ for $\widehat{\nu}$-a.e. $(\xv,\ii_+)$}.
	$$
\end{prop}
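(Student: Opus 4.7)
The plan is to adapt \cite[Proposition~3.3]{B} from the Bernoulli to the Gibbs setting, and the key enabling ingredient is Lemma~\ref{lequiv}: it gives the uniform equivalence $C^{-1}\mu_-\times\mu_+\leq\mu\leq C\mu_-\times\mu_+$, so all cylinder estimates from \cite{B} that relied on the product structure survive with multiplicative constants that disappear after taking $\log$ and dividing by $n$. Throughout, the Shannon--McMillan--Breiman theorem (SMB) applied to the ergodic measures $\mu,\mu_-,\mu_+$ (with common entropy $h_\mu$ by Lemma~\ref{lmu+}) will replace the explicit product computations of cylinder masses used in \cite{B}.

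First I would set up the symbolic-to-geometric dictionary. By Oseledec, for $\mu$-a.e.\ $\ii$ one has $\|A^{(n)}(\ii)|e^s(\ii)\|=e^{-n\chi^s_\mu+o(n)}$; define $n=n(r,\ii)$ as the largest integer with $\|A^{(n)}(\ii)|e^s(\ii)\|\geq r$, so that $n(r,\ii)/(-\log r)\to 1/\chi^s_\mu$. Using the dominated splitting (Lemma~\ref{ldomsplit}), the SSC, and \eqref{ecomptransballs}, the preimage under $\pi$ of $B^T_r(\xv,\ii_+)$ is, up to a bounded number of boundary cylinders, a union of level-$n$ cylinders in $\Sigma^-$ sharing the right tail $\ii_+$: the uniform cone separation from Lemma~\ref{ldomsplit}\eqref{lpunif} rules out cylinders whose stable projection lies farther than $O(r)$ along $e^s$. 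Second, I would estimate the $\nu$-mass of this union; by Lemma~\ref{lequiv} it is comparable to the $\mu_-$-mass of an equivalent family of level-$n$ cylinders in $\Sigma^-$, each having $\mu_-$-mass $e^{-nh_\mu+o(n)}$ by SMB. The number of cylinders in the union equals the cardinality of the atoms of $F^n\xi^{ss}$ contained in $\xi^{ss}(\y)$; by Rokhlin's formula together with SMB applied to the refinement $\bigvee_{k=0}^{n-1}F^{-k}(F\xi^{ss})$ relative to $\xi^{ss}$, this cardinality equals $\exp\bigl(nH(F\xi^{ss}|\xi^{ss})+o(n)\bigr)$ for $\widehat{\nu}$-a.e.\ $\y$.

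Combining the two estimates gives $\nu^T_{\ii_+}(B^T_r(\xv,\ii_+))=\exp\bigl(-n(h_\mu-H(F\xi^{ss}|\xi^{ss}))+o(n)\bigr)$, and dividing by $\log r=-n\chi^s_\mu(1+o(1))$ yields the stated exact dimension formula for $\widehat{\nu}$-a.e.\ $(\xv,\ii_+)$. The main obstacle is the identification of the fibre count as $\exp\bigl(nH(F\xi^{ss}|\xi^{ss})+o(n)\bigr)$: in the Bernoulli case this is a direct finite-alphabet count, but in the Gibbs case one must invoke a Maker-type pointwise ergodic theorem applied to $-\tfrac{1}{n}\log\widehat{\nu}^{ss}_{\y}((F^n\xi^{ss})(\y))$ together with Rokhlin's decomposition of the entropy along the partition $\xi^{ss}$, so that the weaker quasi-Bernoulli property from Lemma~\ref{lmu+} suffices in place of independence. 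A secondary but essential technical check, afforded by Lemma~\ref{ldomsplit}\eqref{lpunif} and the SSC, is that the boundary cylinders in the geometric-to-symbolic conversion contribute only a $o(n)$ error in the exponent.
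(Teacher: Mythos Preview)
Your counting-times-mass decomposition has a genuine gap. You write $\nu(B^T_r(\y))\approx(\text{number of level-}n\text{ cylinders meeting }\xi^{ss}(\y))\times e^{-nh_\mu+o(n)}$ and then identify the count as $e^{nH(F\xi^{ss}|\xi^{ss})+o(n)}$. Both factors are problematic. First, SMB gives $\mu_-([\ii_-|_{-n}^{-1}])=e^{-nh_\mu+o(n)}$ only for $\mu_-$-typical $\ii_-$; the cylinders appearing in your union are those whose image meets a fixed strong-stable line, a measure-zero condition, and there is no reason their masses are uniformly close to $e^{-nh_\mu}$. Second, and more seriously, Rokhlin/SMB applied to the refinement $\bigvee F^{-k}(F\xi^{ss})$ relative to $\xi^{ss}$ yields $\widehat{\nu}^{ss}_{\y}((F^n\xi^{ss})(\y))=e^{-nH+o(n)}$, i.e.\ the \emph{conditional measure} of the atom containing a typical point, not the \emph{cardinality} of atoms. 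These coincide only if all atoms carry comparable conditional mass, which is exactly the Bernoulli-type uniformity you are trying to avoid assuming. Your proposed Maker argument for $-\tfrac{1}{n}\log\widehat{\nu}^{ss}_{\y}((F^n\xi^{ss})(\y))$ therefore produces the wrong quantity for your scheme.

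The paper's proof never counts cylinders. It introduces the approximate conditional densities $g_{\delta,k}(\y)=\widehat{\nu}_{\ii_+}(B^T_\delta(\y)\cap\mathcal{P}_1^k(\y))/\widehat{\nu}_{\ii_+}(B^T_\delta(\y))$, uses Lemma~\ref{linv} together with Lemma~\ref{lequiv} (equations \eqref{eprodform}--\eqref{eprodform2}) to obtain a telescoping product for $\nu(B^T_{\|A^{(pk)}|e^s\|}(\y))$ over $p$ iterates of $F^{-k}$, and applies Maker's theorem to the Birkhoff sum $\tfrac{1}{p}\sum_{l}\log g_{\delta_l,k}(F^{-lk}\y)$ where the radii $\delta_l\to0$ as $p\to\infty$. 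The constant $C$ from Lemma~\ref{lequiv} accumulates as $C^{3p}$, giving an error $3\log C/k$ after dividing by $pk\chi^s_\mu$; only then does one send $k\to\infty$. This two-scale $(k,p)$ structure is essential and is absent from your outline: your single-scale $o(n)$ bookkeeping cannot absorb the $C^{3n}$ growth that Lemma~\ref{lequiv} would produce if applied $n$ times at step size $1$.
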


Let $\mathcal{P}$ be the natural partition, i.e. $\mathcal{P}=\left\{f_i(\Lambda)\times\Sigma^{+}\right\}_{i=1}^N$. Denote the $k$th refinement of $\mathcal{P}$ by $\mathcal{P}_1^k$, i.e. for every $\y\in\Lambda\times\Sigma^+$,  $\mathcal{P}_1^k(\y)=\left(\bigvee_{i=1}^kF^i(\mathcal{P})\right)(\y)=\mathcal{P}(\y)\cap F(\mathcal{P}(F^{-1}(\y)))\cap\dots\cap F^k(\mathcal{P}(F^{-k}(\y)))$. In other words, $\mathcal{P}_1^k$ is the standard partition into $k$-level cylinders.

Let us define almost everywhere the measurable functions $g_k(\y):=\widehat{\nu}_{\y}^{ss}(\mathcal{P}_1^k(\y))$ and
$$
g_{\delta,k}(\y):=\frac{\widehat{\nu}_{\ii_+}(B^T_{\delta}(\y)\cap\mathcal{P}_1^k(\y))}{\widehat{\nu}_{\ii_+}(B^T_{\delta}(\y))}.
$$
By definition, $g_{\delta,k}(\y)$ is the $\delta$ approximation of the measure of $\mathcal{P}_1^k(\y)$ according to the conditional measure. By Rokhlin's Theorem, $g_{\delta,k}\rightarrow g_k$ as $\delta\rightarrow0+$ for $\widehat{\nu}$ almost everywhere and, since $0\leq g_{\delta,k}\leq1$, \eqref{econdmeasure} implies $g_{\delta,k}\rightarrow g_k$ in $L^1(\widehat{\nu})$ as $\delta\rightarrow0+$.

\begin{lemma}\label{ltoLY2b}
	The function $\sup_{\delta>0}\left\{-\log g_{\delta,k}\right\}$ is in $L^1(\widehat{\nu})$ for every $k\geq1$.
\end{lemma}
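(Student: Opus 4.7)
My plan is to split the supremum at a threshold $\delta_0$ chosen larger than the $e^s$-diameter of $\Lambda$, so that for $\delta \geq \delta_0$ the transversal strip $B^T_\delta(\y)$ exhausts the fiber $\Lambda\times\{\ii_+\}$. On the large-$\delta$ range the ratio collapses to $g_{\delta,k}(\y) = \widehat\nu_{\ii_+}(\mathcal{P}_1^k(\y))$, which by the SSC corresponds, under $\pi^-$, to the measure of a cylinder of depth $k+1$ in $\Sigma^-$. Lemma~\ref{lequiv} combined with the Gibbs property \eqref{edefGibbs-} and the boundedness of $\varphi$ on the compact space $\Sigma^-$ then gives a lower bound $\widehat\nu_{\ii_+}(\mathcal{P}_1^k(\y)) \geq c_k > 0$ depending only on $k$, so $\sup_{\delta\geq\delta_0}(-\log g_{\delta,k})\leq -\log c_k$ lies in $L^\infty(\widehat\nu) \subset L^1(\widehat\nu)$.

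For the range $\delta < \delta_0$ I would apply a Besicovitch covering argument on the one-dimensional transversal line; the strips $B^T_\delta(\y)$ project, via $\mathrm{proj}_{e^{ss}(\ii_+)}$ with uniformly bounded distortion (Lemma~\ref{ldomsplit}\eqref{lpunif}), to intervals on that line. Fix $s>0$ and let $E_s := \{\y : \exists\,\delta<\delta_0 \text{ with } g_{\delta,k}(\y) < e^{-s}\}$. Decomposing over the $N^{k+1}$ cells $Q\in\mathcal{P}_1^k$: for $\y \in Q \cap E_s$ a witness ball $B(\y) = B^T_{\delta(\y)}(\y)$ satisfies $\widehat\nu_{\ii_+}(B(\y) \cap Q) < e^{-s}\widehat\nu_{\ii_+}(B(\y))$ (since $\mathcal{P}_1^k(\y) = Q$). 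Extracting a Besicovitch subcover $\{B_i\}$ of $Q\cap E_s$ with absolute overlap constant $c_0$ and summing yields
$$\widehat\nu_{\ii_+}(Q\cap E_s) \leq \sum_i \widehat\nu_{\ii_+}(Q\cap B_i) < e^{-s}\sum_i \widehat\nu_{\ii_+}(B_i) \leq c_0 e^{-s}.$$
Summing over the cells and integrating against $\mu_+$ gives $\widehat\nu(E_s) \leq c_0 N^{k+1} e^{-s}$, and the layer-cake formula $\int\sup_{\delta<\delta_0}(-\log g_{\delta,k})\,d\widehat\nu = \int_0^\infty \widehat\nu(E_s)\,ds \leq \int_0^\infty \min(1,\,c_0 N^{k+1} e^{-s})\,ds < \infty$ completes this range.

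\textbf{Main obstacle.} The delicate step is the small-$\delta$ range: a naive weak-$(1,1)$ Hardy--Littlewood bound for the maximal function of $\mathbf{1}_{Q^c}$ gives only $O(1/\lambda)$ decay in the distribution, which is insufficient for log-integrability. The Besicovitch covering used above is sharper because it exploits the actual density drop inside each witness ball rather than just the $L^1$ norm of $\mathbf{1}_{Q^c}$. The Gibbs hypothesis enters only in the large-$\delta$ step, where it guarantees a uniform lower bound on cell measures; the small-$\delta$ argument itself requires only that $\widehat\nu_{\ii_+}$ is a Borel probability measure on a 1D transversal, so it proceeds essentially as in the Bernoulli case of \cite{B}.
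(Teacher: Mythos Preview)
Your argument is correct and is essentially the standard Ledrappier--Young maximal-function estimate that the paper invokes by citing \cite[Lemma~3.6]{B} (the paper gives no independent proof here). The two-regime split, the fiberwise Besicovitch covering on the one-dimensional transversal, and the resulting exponential tail $\widehat{\nu}(E_s)\lesssim N^{k}e^{-s}$ are exactly the mechanism used in \cite{B}; the only place the present paper's Gibbs hypothesis could enter is the large-$\delta$ regime, and there---as you note---Lemma~\ref{lequiv} together with \eqref{edefGibbs-} gives the uniform cell lower bound (in fact even the cruder observation $\int -\log\widehat{\nu}_{\ii_+}(\mathcal P_1^k(\y))\,d\widehat{\nu}_{\ii_+}=H_{\widehat{\nu}_{\ii_+}}(\mathcal P_1^k)\le k\log N$ already suffices for $L^1$).

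One small technical remark: the balls $B^T_\delta(\y)$ are measured along $e^s(\xv)$, which varies with the basepoint, so to run Besicovitch cleanly it is easiest to pass via \eqref{ecomptransballs} to the balls $B^t_r$, which for fixed $\ii_+$ are pullbacks of genuine Euclidean intervals on $(e^{ss}(\ii_+))^\perp$; the bounded distortion constant $c$ is then absorbed into the Besicovitch overlap constant. This is implicit in your remark about ``uniformly bounded distortion (Lemma~\ref{ldomsplit}\eqref{lpunif})'' but worth making explicit.
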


The proof of Lemma~\ref{ltoLY2b} coincides with \cite[Lemma~3.6]{B}.

\begin{lemma}\label{linv}
	For every $\xv=\pi^-(i_{-1},i_{-2},\dots)\in\Lambda$, $\ii_+\in\Sigma^+$, $\delta>0$ and $k\geq1$
	$$F^k\left(B^T_{\delta}(F^{-k}(\y))\times[i_{-k},\dots,i_{-1}]\right)=\left(B^T_{\|A_{i_{-1}}\cdots A_{i_{-k}}|e_s(F^{-k}(\y))\|\delta}(\y)\cap\mathcal{P}_1^k(\y)\right)\times\Sigma^+,$$
	where $\y=(\xv,\ii_+)$.
\end{lemma}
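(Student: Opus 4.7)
The plan is to verify the set equality by an explicit coordinate computation. Two facts drive everything: (i) by the SSC, $F$ is invertible on $\Lambda\times\Sigma^+$, and $F^k$ acts affinely on the spatial coordinate with linear part $A:=A_{i_{-1}}\cdots A_{i_{-k}}$; (ii) by the dominated splitting (Lemma~\ref{ldomsplit}(1)), this $A$ sends $e_s(\xv_k)$ onto $e_s(\xv)$ and $e^{ss}(\jj_+^{(k)})$ onto $e^{ss}(\ii_+)$. The rest is book-keeping around the definition of $B^T_\delta$, and I do not expect any genuine obstacle.

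First I would spell out $F^{-k}(\y)=(\xv_k,\jj_+^{(k)})$ with
$$
\xv_k=(f_{i_{-1}}\circ\cdots\circ f_{i_{-k}})^{-1}(\xv)=\pi^-(\sigma_-^k\ii_-),\qquad \jj_+^{(k)}=i_{-k}i_{-k+1}\cdots i_{-1}\ii_+,
$$
so that for any $(\xv',\jj_+)$ with $\jj_+\in[i_{-k},\dots,i_{-1}]$ one has $F^k(\xv',\jj_+)=(f_{i_{-1}}\circ\cdots\circ f_{i_{-k}}(\xv'),\sigma^k\jj_+)$. Now $\sigma^k$ sends $[i_{-k},\dots,i_{-1}]$ bijectively onto $\Sigma^+$, and the spatial component of $\mathcal{P}_1^k(\y)$ is precisely $f_{i_{-1}}\circ\cdots\circ f_{i_{-k}}(\Lambda)$, so the symbolic factor and the intersection with $\mathcal{P}_1^k(\y)$ match up automatically. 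It therefore suffices to show that a point $\xv'\in\Lambda$ satisfies the transversal-distance condition defining $B^T_\delta(F^{-k}(\y))$ if and only if its image $\xv'':=f_{i_{-1}}\circ\cdots\circ f_{i_{-k}}(\xv')$ satisfies the transversal-distance condition defining $B^T_{\|A|e_s(\xv_k)\|\,\delta}(\y)$.

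For this, I would choose unit vectors $\mathbf{u}_s\in e_s(\xv_k)$ and $\mathbf{u}_{ss}\in e^{ss}(\jj_+^{(k)})$, which by Lemma~\ref{ldomsplit}(4) form a basis of $\R^2$, and expand $\xv'-\xv_k=p_s\mathbf{u}_s+p_{ss}\mathbf{u}_{ss}$. Elementary plane geometry then gives that the line $l_{ss}(\xv',\jj_+^{(k)})$ meets the affine line $\xv_k+e_s(\xv_k)$ at $\xv_k+p_s\mathbf{u}_s$, so
$$
\mathrm{dist}_{e_s(\xv_k)}\bigl(l_{ss}(\xv_k,\jj_+^{(k)}),\,l_{ss}(\xv',\jj_+^{(k)})\bigr)=|p_s|,
$$
and the defining condition of $B^T_\delta(F^{-k}(\y))$ becomes $|p_s|\leq\delta$. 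After applying the affine map, $\xv''-\xv=p_s(A\mathbf{u}_s)+p_{ss}(A\mathbf{u}_{ss})$; by invariance of the subspaces, $A\mathbf{u}_s\in e_s(\xv)$ with $\|A\mathbf{u}_s\|=\|A|e_s(\xv_k)\|$ and $A\mathbf{u}_{ss}\in e^{ss}(\ii_+)$, so repeating the same intersection computation in the basis $\{A\mathbf{u}_s,A\mathbf{u}_{ss}\}$ of $\R^2$ yields
$$
\mathrm{dist}_{e_s(\xv)}\bigl(l_{ss}(\xv,\ii_+),\,l_{ss}(\xv'',\ii_+)\bigr)=|p_s|\cdot\|A|e_s(\xv_k)\|.
$$
Thus $(\xv'',\ii_+)\in B^T_{\|A|e_s(\xv_k)\|\,\delta}(\y)$ iff $|p_s|\leq\delta$, which matches the preimage condition and completes the identification. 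The only place where the dominated splitting is genuinely used is in identifying the scaling factor as $\|A|e_s(\xv_k)\|$ independently of which unit vector in $e_s(\xv_k)$ we picked; without this invariance, no single scalar would suffice.
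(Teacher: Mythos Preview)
Your argument is correct. The paper itself states Lemma~\ref{linv} without proof, treating it as a routine verification; your explicit coordinate computation---expanding $\xv'-\xv_k$ in the basis $\{\mathbf{u}_s,\mathbf{u}_{ss}\}$ adapted to the splitting, reading off the transversal distance as $|p_s|$, and then tracking how the affine map $A=A_{i_{-1}}\cdots A_{i_{-k}}$ scales that coordinate by $\|A|e_s(\xv_k)\|$ via the invariance of $e_s$ and $e^{ss}$ (Lemma~\ref{ldomsplit}(1))---is exactly the kind of direct check the authors had in mind, and fills in the omitted details cleanly.
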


By using the fact that $\nu=(\pi^-)_*\mu_-=(\pi^-)_*(p_-)_*\mu$, we have
\begin{multline*}
\nu(B^T_{\delta}(\y)\cap\mathcal{P}_1^k)=\widehat{\nu}\left(B^T_{\delta}(\y)\cap\mathcal{P}_1^k\times\Sigma^+\right)=\\
\widehat{\nu}\left(F^{-k}\left(B^T_{\delta}(\y)\cap\mathcal{P}_1^k\times\Sigma^+\right)\right)=\widehat{\nu}\left(B^T_{\|A_{i_{-1}}\cdots A_{i_{-k}}|e_s(F^{-k}(\y))\|^{-1}\delta}(F^{-k}(\y))\times[i_{-k},\dots,i_{-1}]\right),
\end{multline*}
where in the last equation we used Lemma~\ref{linv}. By Lemma~\ref{lequiv},
\begin{multline}\label{eprodform}
\nu(B^T_{\delta}(\y)\cap\mathcal{P}_1^k(\y))=\widehat{\nu}\left(B^T_{\|A_{i_{-1}}\cdots A_{i_{-k}}|e_s(F^{-k}(\y))\|^{-1}\delta}(F^{-k}(\y))\times[i_{-k},\dots,i_{-1}]\right)\leq\\
C \nu\left(B^T_{\|A_{i_{-1}}\cdots A_{i_{-k}}|e_s(F^{-k}(\y))\|^{-1}\delta}(F^{-k}(\y))\right)\mu_+([i_{-k},\dots,i_{-1}]),
\end{multline}
and
\begin{equation}\label{eprodform2}
\nu(B^T_{\delta}(\y)\cap\mathcal{P}_1^k(\y))\geq C^{-1} \nu\left(B^T_{\|A_{i_{-1}}\cdots A_{i_{-k}}|e_s(F^{-k}(\y))\|^{-1}\delta}(F^{-k}(\y))\right)\mu_+([i_{-k},\dots,i_{-1}])
\end{equation}
for every $\delta>0$, $k\geq1$, and $\y\in\Lambda\times\Sigma^+$.

\begin{proof}[Proof of Proposition~\ref{pLY2}]
	By the definition of the transversal measure, the statement of the proposition is equivalent to
	\begin{equation*}
	\lim_{\delta\rightarrow0+}\frac{\log\nu(B_{\delta}^t(\xv,\ii_+))}{\log\delta}=\frac{h_{\nu}-H(F\xi^{ss}|\xi^{ss})}{\chi^{s}_{\mu}}\text{ for $\nu\times\mu_+$-a.e $(\xv,\ii_+)$.}
	\end{equation*}

	Hence, by \eqref{ecomptransballs} and by Lemma~\ref{ldomsplit}, it is enough to show that if $\y=(\xv,\ii_+)\in\Lambda\times\Sigma^+$ with $\xv=\pi_-(i_{-1},i_{-2},\dots)$,
	\begin{equation*}
	\lim_{p\rightarrow\infty}\frac{\log\nu\left(B^T_{\|A_{i_{-1}}\cdots A_{i_{-pk}}|e_s(F^{-pk}(\y))\|}(\y)\right)}{\log\alpha_1(A_{i_{-1}}\cdots A_{i_{-pk}})}=\frac{h_{\nu}-H(F\xi^{ss}|\xi^{ss})}{\chi^{s}_{\mu}}\text{ for $\nu\times\mu_+$-a.e $\y$.}
	\end{equation*}
	By Oseledec's Theorem, we have
	\begin{equation}\label{emodlyap}
	\lim_{p\to\infty}\frac{1}{p}\log\alpha_1(A_{i_{-1}}\cdots A_{i_{-pk}})=-k\chi^{s}_{\mu}\text{ for $\mu_-$-a.e $\ii_-$.}
	\end{equation}
	
	By applying \eqref{eprodform}, \eqref{eprodform2} and Lemma~\ref{lequiv},
	\begin{eqnarray*}
	&&\nu\left(B^T_{\|A_{i_{-1}}\cdots A_{i_{-pk}}|e_s(F^{-pk}(\y))\|}(\y)\right)=\\
&&\nu\left(B_1^T(F^{-pk})\right)\prod_{l=1}^{p}\dfrac{\nu\left(B^T_{\|A_{i_{-(l-1)k-1}}\cdots A_{i_{-pk}}|e_s(F^{-pk}(\y))\|}(F^{-(l-1)k}(\y))\right)}{\nu\left(B^T_{\|A_{i_{-lk-1}}\cdots A_{i_{-pk}}|e_s(F^{-pk}(\y))\|}(F^{-lk}(\y))\right)}\leq\\
&&	C^p\nu\left(B_1^T(F^{-pk})\right)\prod_{l=1}^{p}\dfrac{\nu\left(B^T_{\|A_{i_{-(l-1)k-1}}\cdots A_{i_{-pk}}|e_s(F^{-pk}(\y))\|}(F^{-(l-1)k}(\y))\right)\mu_+([i_{-(l-1)k-1},\dots,i_{-lk}])}{\nu\left(B^T_{\|A_{i_{-(l-1)k-1}}\cdots A_{i_{-pk}}|e_s(F^{-pk}(\y))\|}(F^{-(l-1)k}(\y))\cap\mathcal{P}_1^k(F^{-(l-1)k}(\y)\right)}\leq\\
&&C^{3p}\nu\left(B_1^T(F^{-pk})\right)\prod_{l=1}^{p}\dfrac{\widehat{\nu}_{F^{-(l-1)k}(\y)}\left(B^T_{\|A_{i_{-(l-1)k-1}}\cdots A_{i_{-pk}}|e_s(F^{-pk}(\y))\|}(F^{-(l-1)k}(\y))\right)\mu_+([i_{-(l-1)k-1},\dots,i_{-lk}])}{\widehat{\nu}_{F^{-(l-1)k}(\y)}\left(B^T_{\|A_{i_{-(l-1)k-1}}\cdots A_{i_{-pk}}|e_s(F^{-pk}(\y))\|}(F^{-(l-1)k}(\y))\cap\mathcal{P}_1^k(F^{-(l-1)k}(\y)\right)}.
\end{eqnarray*}

	Similarly,
	\begin{eqnarray*}
	&&\nu\left(B^T_{\|A_{i_{-1}}\cdots A_{i_{-pk}}|e_s(F^{-pk}(\y))\|}(\y)\right)\geq\\
	&&C^{-3p}\nu\left(B_1^T(F^{-pk})\right)\cdot\\
	&&\prod_{l=1}^{p}\dfrac{\widehat{\nu}_{F^{-(l-1)k}(\y)}\left(B^T_{\|A_{i_{-(l-1)k-1}}\cdots A_{i_{-pk}}|e_s(F^{-pk}(\y))\|}(F^{-(l-1)k}(\y))\right)\mu_+([i_{-(l-1)k-1},\dots,i_{-lk}])}{\widehat{\nu}_{F^{-(l-1)k}(\y)}\left(B^T_{\|A_{i_{-(l-1)k-1}}\cdots A_{i_{-pk}}|e_s(F^{-pk}(\y))\|}(F^{-(l-1)k}(\y))\cap\mathcal{P}_1^k(F^{-(l-1)k}(\y)\right)}.
	\end{eqnarray*}
	By taking logarithm and dividing by $p$ we get
	\begin{multline*}
	\frac{1}{p}\log\nu\left(B_1^T(F^{-pk})\right)-3\log C-\frac{1}{p}\sum_{l=1}^p\log g_{\|A_{i_{-lk-1}}\cdots A_{i_{-pk}}|e_s(F^{-pk}(\y))\|,k}(F^{-lk}(\y))+\\
	\frac{1}{p}\sum_{l=1}^p\log\mu_+([i_{-(l-1)k-1},\dots,i_{-lk}])\leq\frac{1}{p}\log\nu\left(B^T_{\|A_{i_{-1}}\cdots A_{i_{-pk}}|e_s(F^{-pk}(\y))\|}(\y)\right)
\end{multline*}
and
\begin{multline*}
\frac{1}{p}\log\nu\left(B^T_{\|A_{i_{-1}}\cdots A_{i_{-pk}}|e_s(F^{-pk}(\y))\|}(\y)\right)\leq
	\frac{1}{p}\log\nu\left(B_1^T(F^{-pk})\right)+3\log C-\\
\frac{1}{p}\sum_{l=1}^p\log g_{\|A_{i_{-lk-1}}\cdots A_{i_{-pk}}|e_s(F^{-pk}(\y))\|,k}(F^{-lk}(\y))+
	\frac{1}{p}\sum_{l=1}^p\log\mu_+([i_{-(l-1)k-1},\dots,i_{-lk}]).
	\end{multline*}
	By Lemma~\ref{ltoLY2b}, we may apply the result of Maker's Ergodic Theorem~\cite[Theorem~1]{M}, so we get
	$$
	\lim_{p\to\infty}-\frac{1}{p}\sum_{l=1}^p\log g_{\|A_{i_{-lk-1}}\cdots A_{i_{-pk}}|e_s(F^{-pk}(\y))\|,k}(F^{-lk}(\y))=-\int\log g_{k}(\y)d\widehat{\nu}(\y)=kH(F\xi^{ss}|\xi^{ss})
	$$
	for $\widehat{\nu}$-a.e. $\y$. Applying Birkhoff's ergodic theorem and \eqref{emodlyap} we get
	\begin{multline*}
	\frac{-3\log C-kH(F\xi^{ss}|\xi^{ss})-\sum_{\iv\in\mathcal{S}^k}\mu_+([\iv])\log\mu_+([\iv])}{k\chi^s_{\mu}}\leq\underline{d}_{\nu^T_{\ii_+}}(\xv)\leq\overline{d}_{\nu^T_{\ii_+}}(\xv)\leq\\
	\frac{3\log C-kH(F\xi^{ss}|\xi^{ss})-\sum_{\iv\in\mathcal{S}^k}\mu_+([\iv])\log\mu_+([\iv])}{k\chi^s_{\mu}}\text{ for $\widehat{\nu}$-a.e. $\y$ and every $k\geq1$.}
	\end{multline*}
	By taking the limit $k\to\infty$, we get that
	$$
	\underline{d}_{\nu^T_{\ii_+}}(\xv)=\overline{d}_{\nu^T_{\ii_+}}(\xv)=\frac{h_{\mu}-H(F\xi^{ss}|\xi^{ss})}{\chi^s_{\mu}}\text{ for $\widehat{\nu}$-a.e. $\y$}.
	$$
	Since $\widehat{\nu}$ is equivalent to $\nu\times\mu_+$, the statement follows.
\end{proof}

\begin{proof}[Proof of Theorem~\ref{tLYGibbs}]
	Since the proofs of \cite[[Proposition~3.1, Proposition~3.8 and Proposition~3.9]{B} do not use that the examined measure is Bernoulli, one can modify them to show that for $\widehat{\nu}$-a.e. $\y\in\Lambda\times\Sigma^+$ the measure $\widehat{\nu}_{\y}^{ss}$ is exact dimensional and $$\dim_H\widehat{\nu}_{\y}^{ss}=\frac{H(F\xi^{ss}|\xi^{ss})}{\chi_{\mu}^{ss}}.$$
	Moreover,
	$$
	\liminf_{r\to\infty}\frac{\widehat{\nu}_{\ii_+}(B_r(\xv))}{\log r}\geq\frac{H(F\xi^{ss}|\xi^{ss})}{\chi_{\mu}^{ss}}+\frac{h_{\mu}-H(F\xi^{ss}|\xi^{ss})}{\chi^s_{\mu}}\text{ for $\widehat{\nu}$-a.e. $(\xv,\ii_+)$}
	$$
	and by using that $\nu=(p_-)_*\widehat{\nu}$
	$$
	\limsup_{r\to\infty}\frac{\nu(B_r(\xv))}{\log r}\leq\frac{H(F\xi^{ss}|\xi^{ss})}{\chi_{\mu}^{ss}}+\frac{h_{\mu}-H(F\xi^{ss}|\xi^{ss})}{\chi^s_{\mu}}\text{ for $\nu$-a.e. $\xv$}.
	$$
	Since the measure $\nu$ is equivalent to $\widehat{\nu}_{\ii_+}$ for $\mu_+$-a.e. $\ii_+$, the statement follows by Proposition~\ref{pLY2}.
\end{proof}

As a corollary of Theorem~\ref{tLYGibbs}, we are able to give two conditions which ensure that the dimension of a Gibbs measure is equal to its Lyapunov dimension. The second part of condition \eqref{ctdim3} in the next theorem appears in \cite{FK}, as well, for the Gibbs measure generated by the subadditive pressure.

\begin{theorem}\label{tdimGibbs}
	Let $\mathcal{A}=\left\{A_k\right\}_{k=1}^N$ be a family of $2\times2$ real non-singular matrices and $\Phi=\left\{A_k\xv+\tv_k\right\}_{k=1}^N$ be an IFS of affinities on the plane. Moreover, let $\mu_-$ be a $\sigma_-$-invariant ergodic Gibbs measures on $\Sigma^-$, let $\mu$ be its unique extension to $\Sigma$ and let $\mu_+$ be the quasi-Bernoulli measure defined in Lemma~\ref{lmu+}. Assume that
	\begin{enumerate}[(i)]
		\item the IFS $\Phi$ satisfies the strong separation condition,
		\item $\mathcal{A}$ satisfies dominated splitting condition
        \item\label{ctdim3} either $\dim_H(e^{ss})_*\mu_+\geq\min\left\{1,\ldim\mu_-\right\}$ or $\dim_H(e^{ss})_*\mu_++\dim_H(\pi^-)_*\mu_->2$
	\end{enumerate}
	Then
	$$\dim_H(\pi^-)_*\mu=\min\left\{\frac{h_{\mu}}{\chi^{s}_{\mu}},1+\frac{h_{\mu}-\chi^{s}_{\mu}}{\chi^{ss}_{\mu}}\right\}.$$
\end{theorem}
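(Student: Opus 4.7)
The plan is to combine the Ledrappier--Young formula of Theorem~\ref{tLYGibbs} (together with the identity $\dim_H\nu^T_{\ii_+} = (h_\mu - H(F\xi^{ss}|\xi^{ss}))/\chi^s_\mu$ from Proposition~\ref{pLY2}) with the classical projection theorems of Kaufman, Mattila and Marstrand. Hypothesis~(iii) is what forces the transversal dimensions $\dim_H\nu^T_{\ii_+}$, where $\nu := (\pi^-)_*\mu_-$, to attain their largest possible values for $\mu_+$-almost every $\ii_+$.

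First observe that the LY formula together with the trivial bound $\dim_H\nu^T_{\ii_+}\le 1$ and the bound $\dim_H\nu^T_{\ii_+}\le h_\mu/\chi^s_\mu$ (the latter because $H(F\xi^{ss}|\xi^{ss})\ge 0$) immediately yield the two upper bounds $\dim_H\nu\le 1+(h_\mu-\chi^s_\mu)/\chi^{ss}_\mu$ and $\dim_H\nu\le h_\mu/\chi^s_\mu$, hence $\dim_H\nu\le\ldim\mu_-$. We are therefore left to prove the reverse inequality, and a short case analysis of the LY formula shows that it suffices to establish
\[
\dim_H\nu^T_{\ii_+} \;\ge\; \min\{1,\,\dim_H\nu\} \qquad \text{for $\mu_+$-a.e.\ }\ii_+.
\]
Indeed, if $\dim_H\nu\le 1$ then the above combined with the elementary bound $\dim_H\nu^T_{\ii_+}\le\dim_H\nu$ forces equality, and substituting $\dim_H\nu^T_{\ii_+}=\dim_H\nu$ into the LY formula gives $\dim_H\nu = h_\mu/\chi^s_\mu = \ldim\mu_-$; if instead $\dim_H\nu>1$ then $\dim_H\nu^T_{\ii_+}=1$, and the LY formula yields $\dim_H\nu = 1 + (h_\mu-\chi^s_\mu)/\chi^{ss}_\mu = \ldim\mu_-$.

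To verify the displayed inequality we use that $\nu^T_{\ii_+}$ is the orthogonal projection of $\nu$ along $e^{ss}(\ii_+)\in\mathbf{P}^1$, and as $\ii_+$ varies these directions are distributed according to $(e^{ss})_*\mu_+$. Under the second alternative of (iii), $\dim_H(e^{ss})_*\mu_+ + \dim_H\nu > 2$, and by Mattila's projection theorem the set $\{\theta\in\mathbf{P}^1 : \mathrm{proj}_\theta\nu$ is not absolutely continuous w.r.t.\ Lebesgue$\}$ has Hausdorff dimension at most $2-\dim_H\nu$, which is strictly less than $\dim_H(e^{ss})_*\mu_+$; hence this exceptional set is $(e^{ss})_*\mu_+$-null and $\dim_H\nu^T_{\ii_+}=1$ for $\mu_+$-a.e.\ $\ii_+$. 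Under the first alternative of (iii), for every $s<\min\{1,\dim_H\nu\}\le\min\{1,\ldim\mu_-\}\le\dim_H(e^{ss})_*\mu_+$ the Kaufman--Mattila inequality $\dim_H\{\theta:\dim_H\mathrm{proj}_\theta\nu<s\}\le s$ shows that the corresponding exceptional set is $(e^{ss})_*\mu_+$-null; letting $s\nearrow\min\{1,\dim_H\nu\}$ completes the verification.

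The main technical obstacle is the careful handling of the non-strict inequality in the first alternative of (iii), bridged by the standard limiting trick of taking a supremum over values of $s$ strictly smaller than the target $\min\{1,\dim_H\nu\}$. Apart from that, the plan relies only on the already-established Theorem~\ref{tLYGibbs} and on the classical Kaufman--Mattila and Mattila projection theorems, whose applicability to the exact-dimensional measure $\nu$ is routine.
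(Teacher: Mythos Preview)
Your proposal is correct and follows the same route the paper indicates: the paper's entire proof is the sentence ``By Theorem~\ref{tLYGibbs}, the proof is similar to the proofs of \cite[Theorem~2.8 and Theorem~2.9]{B},'' and those results are precisely the combination of the Ledrappier--Young formula with the Kaufman/Falconer--Mattila exceptional-set bounds for orthogonal projections that you have spelled out. Your case analysis and the limiting argument for the non-strict inequality in the first alternative of~(iii) are exactly what is needed.
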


By Theorem~\ref{tLYGibbs}, the proof is similar to the proofs of \cite[Theorem~2.8 and Theorem~2.9]{B}.

\section{Dimension of Gibbs measures and transversality condition of strong stable directions}\label{ssstc}

In this section and the rest of the paper, we are going to study the dimension of Gibbs measures. To be able to calculate the dimension of Gibbs measure, we have to handle the dimension of strong stable directions, see \eqref{ctdim3} of Theorem~\ref{tdimGibbs}. In the case, when the matrices satisfies the backward non-overlapping condition, i.e. there exists a backward invariant multicone $M$ such that $A^{-1}_i(M^{o})\subseteq M^{o}$ and $A^{-1}_i(M^{o})\cap A_j^{-1}(M^{o})=\emptyset$ for every $i\neq j$, it is possible to calculate the dimension of strong stable directions. Namely, by \cite[Lemma~4.2]{B}, for every $\sigma$-invariant ergodic measure $\mu$ on $\Sigma^+$
\begin{equation*}
\dim_H(e^{ss})_*\mu=\frac{h_{\mu}}{\chi^{ss}_{\mu}-\chi^{s}_{\mu}},
\end{equation*}
where $h_{\mu}$ denotes the entropy of $\mu$.

In general a set of matrices does not satisfy this phenomena. In this section we introduce a condition, which makes us able to handle the problem of overlaps.  Namely, we consider a parametrized family of matrices $\mathcal{A}(\lv)$ with the corresponding map of stable- and strong stable directions $e^{s}_{\lv}$ and $e^{ss}_{\lv}$.

\begin{definition}\label{dsstrans}
	Let $U\subset\R^d$ be open and bounded. We say that a parametrized family of matrices $\mathcal{A}(\lv)=\left\{A_i(\lv)\right\}_{i=1}^N$ satisfies the \underline{strong-stable transversality} on $U$ if
	\begin{itemize}
		\item the parametrisation $\lv\mapsto A_i(\lv)$ is continuous for every $i=1,\dots,N$ on an open neighbourhood of $\overline{U}$
		\item for every $\lv\in\overline{U}$ the set $\mathcal{A}(\lv)$ satisfies the dominated splitting
		\item there exists a constant $C>0$ that for every $\ii,\jj\in\Sigma^+$ with $i_0\neq j_0$
		$$
		\mathcal{L}_d\left\{\lv\in U:\sphericalangle(e^{ss}_{\lv}(\ii),e^{ss}_{\lv}(\jj))<r\right\}\leq Cr\text{ for every $r>0$}.
		$$
	\end{itemize}
\end{definition}

The definition of strong-stable transversality is a natural generalisation of the transversality condition for iterated function systems, see \cite[(2.9)]{SSU}.

\begin{theorem}\label{tdimGibbs2}
	Let $U\subset\R^d$ be an open and bounded set and let $\mathcal{A}(\lv)=\left\{A_k(\lv)\right\}_{k=1}^N$ be a parametrized family of $2\times2$ real matrices and $\Phi(\lv)=\left\{A_k(\lv)\xv+\tv_k(\lv)\right\}_{k=1}^N$ be a parametrized family affine IFSs on the real plane such that
	\begin{enumerate}[(i)]
		\item for every $\lv\in U$ the IFS $\Phi(\lv)$ satisfies the strong separation condition,
		\item $\mathcal{A}(\lv)$ satisfies the strong-stable transversality on $U$.
	\end{enumerate}
	Let $\left\{\mu_{\lv}\right\}_{\lv\in U}$ be a parametrized family of $\sigma_-$-invariant ergodic Gibbs measures on $\Sigma^-$ such that the family of the corresponding H\"older continuous potential functions $\left\{\phi_{\lv}\right\}_{\lv\in U}$ is uniformly continuously parametrized, moreover,
	\begin{enumerate}[(i)]\setcounter{enumi}{2}
	\item\label{ctdim32} $
	\text{either } \dfrac{h_{\mu_{\lv}}}{\chi^{ss}_{\mu_{\lv}}(\lv)-\chi^{s}_{\mu_{\lv}}(\lv)}\geq\min\left\{1,\dfrac{h_{\mu_{\lv}}}{\chi^{s}_{\mu_{\lv}}(\lv)}\right\}\text{\ or\ }\dfrac{h_{\mu_{\lv}}}{\chi^{ss}_{\mu_{\lv}}(\lv)-\chi^{s}_{\mu_{\lv}}(\lv)}+2\dfrac{h_{\mu_{\lv}}}{\chi^{ss}_{\mu_{\lv}}(\lv)}>2
	$
	\end{enumerate}
	Then
	$$\dim_H(\pi^-_{\lv})_*\mu_{\lv}=\min\left\{\frac{h_{\mu_{\lv}}}{\chi^{s}_{\mu_{\lv}}(\lv)},1+\frac{h_{\mu_{\lv}}-\chi^{s}_{\mu_{\lv}}(\lv)}{\chi^{ss}_{\mu_{\lv}}(\lv)}\right\}\text{ for $\mathcal{L}_d$-a.e. $\lv\in U$.}$$
\end{theorem}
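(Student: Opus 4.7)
The proof reduces to Theorem~\ref{tdimGibbs}: it suffices to verify its condition (iii) for $\mathcal{L}_d$-a.e.\ $\lv\in U$. The central technical step is the almost-sure lower bound
\[
(\ast)\qquad \dim_H(e^{ss}_{\lv})_*(\mu_{\lv})_+\;\geq\;\min\Bigl\{1,\frac{h_{\mu_{\lv}}}{\chi^{ss}_{\mu_{\lv}}(\lv)-\chi^{s}_{\mu_{\lv}}(\lv)}\Bigr\}\ \text{ for $\mathcal{L}_d$-a.e. }\lv\in U.
\]
Granted $(\ast)$, each of the two alternatives in hypothesis (iii) of Theorem~\ref{tdimGibbs2} yields the corresponding alternative in (iii) of Theorem~\ref{tdimGibbs}. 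In the first case, $(\ast)$ combined with the hypothesis $h_{\mu_\lv}/(\chi^{ss}_{\mu_\lv}-\chi^{s}_{\mu_\lv})\geq\min\{1,h_{\mu_\lv}/\chi^{s}_{\mu_\lv}\}$ gives $\dim_H(e^{ss}_\lv)_*\mu_+\geq\min\{1,h_{\mu_\lv}/\chi^{s}_{\mu_\lv}(\lv)\}$, and since $\ldim\mu_-\leq h_{\mu_\lv}/\chi^{s}_{\mu_\lv}(\lv)$ always, this establishes the first alternative of Theorem~\ref{tdimGibbs}(iii). In the second case, one may assume the first fails, which forces $h_{\mu_\lv}<\chi^{ss}_{\mu_\lv}(\lv)-\chi^{s}_{\mu_\lv}(\lv)$ and $\chi^{ss}_{\mu_\lv}(\lv)>2\chi^{s}_{\mu_\lv}(\lv)$; the Ledrappier--Young formula of Theorem~\ref{tLYGibbs} together with the trivial bound $\dim_H\nu^T_{\ii_+}\geq0$ then gives $\dim_H(\pi^-_\lv)_*\mu_\lv\geq h_{\mu_\lv}/\chi^{s}_{\mu_\lv}(\lv)\geq 2h_{\mu_\lv}/\chi^{ss}_{\mu_\lv}(\lv)$, and combined with $(\ast)$ and the quantitative hypothesis this yields $\dim_H(e^{ss})_*\mu_++\dim_H(\pi^-)_*\mu_->2$.

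To prove $(\ast)$ I would use a Frostman/transversality argument. Fix $s$ strictly less than $\min\{1,h_{\mu_{\lv_0}}/(\chi^{ss}_{\mu_{\lv_0}}(\lv_0)-\chi^{s}_{\mu_{\lv_0}}(\lv_0))\}$ for a given $\lv_0\in U$. Continuity of the entropy and Lyapunov exponents in $\lv$---a consequence of the uniform continuity of $\lv\mapsto\phi_\lv$, uniform dominated splitting on $\overline U$, and Lemma~\ref{lstrongholder}---ensures this inequality persists on an open neighbourhood $V$ of $\lv_0$. Covering $U$ by countably many such $V$ reduces the problem to showing that the $s$-energy
\[
I_s(\lv):=\iint_{\Sigma^+\times\Sigma^+}\sphericalangle\bigl(e^{ss}_\lv(\ii),e^{ss}_\lv(\jj)\bigr)^{-s}\,d(\mu_\lv)_+(\ii)\,d(\mu_\lv)_+(\jj)
\]
is $\mathcal{L}_d$-integrable on $V$. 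Then Fubini gives $I_s(\lv)<\infty$ for $\mathcal{L}_d$-a.e.\ $\lv\in V$, Frostman's lemma gives $\dim_H(e^{ss}_\lv)_*(\mu_\lv)_+\geq s$, and letting $s$ approach the upper bound along rationals delivers $(\ast)$.

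To estimate $\int_V I_s\,d\lv$, decompose the double $\mu_+$-integral according to $n=\ii\wedge\jj$, writing the common prefix as $\iif$. As in the proof of Lemma~\ref{lstrongholder}, applying Lemma~\ref{ldistcomp} to $A_\iif(\lv)^{-1}v_1$, $A_\iif(\lv)^{-1}v_2$ for unit $v_1\in e^{ss}_\lv(\sigma^n\ii)$, $v_2\in e^{ss}_\lv(\sigma^n\jj)$, together with the norm bound $\|A_\iif(\lv)^{-1}\|\asymp\alpha_2(A_\iif(\lv))^{-1}$ coming from Lemma~\ref{ldomsplit}\eqref{ldomsplit2}, yields the projective-contraction lower bound
\[
\sphericalangle\bigl(e^{ss}_\lv(\ii),e^{ss}_\lv(\jj)\bigr)\;\geq\;c\,\frac{\alpha_2(A_\iif(\lv))}{\alpha_1(A_\iif(\lv))}\,\sphericalangle\bigl(e^{ss}_\lv(\sigma^n\ii),e^{ss}_\lv(\sigma^n\jj)\bigr),
\]
with a universal $c>0$. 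Since $\sigma^n\ii$ and $\sigma^n\jj$ differ in position zero, the strong-stable transversality together with the layer-cake formula gives $\int_V\sphericalangle(e^{ss}_\lv(\sigma^n\ii),e^{ss}_\lv(\sigma^n\jj))^{-s}\,d\lv\leq C(s)$ whenever $s<1$. Combining with the quasi-Bernoulli property of $\mu_+$ (Lemma~\ref{lmu+}) and Fubini leads to
\[
\int_V I_s(\lv)\,d\lv\;\leq\;C(s)\sum_{n\geq0}\sum_{|\iif|=n+1}\mu_+([\iif])^2\,\sup_{\lv\in V}(\alpha_1/\alpha_2)^s(A_\iif(\lv)).
\]
Shannon--McMillan for the Gibbs $\mu_+$ gives $\mu_+([\iif])\approx e^{-nh_{\mu_\lv}}$ on the typical set, while Birkhoff for the H\"older additive cocycle $\log(\alpha_1/\alpha_2)(A^{(n)})$---which differs by $O(1)$ from the additive Birkhoff sum $\sum_k(\log\|A_{i_k}|e^s\|-\log\|A_{i_k}|e^{ss}\|)$ by Lemma~\ref{ldomsplit}\eqref{ldomsplit2}---gives $(\alpha_1/\alpha_2)(A_\iif(\lv))\approx e^{n(\chi^{ss}_{\mu_\lv}(\lv)-\chi^{s}_{\mu_\lv}(\lv))}$. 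Hence the typical $\iif$ contribute $\approx e^{-n(h_{\mu_\lv}-s(\chi^{ss}_{\mu_\lv}-\chi^{s}_{\mu_\lv}))}$, which is geometrically summable on $V$ because $s<h_{\mu_\lv}/(\chi^{ss}_{\mu_\lv}-\chi^{s}_{\mu_\lv})$.

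The principal technical obstacle is controlling the atypical $\iif$ uniformly in $\lv\in V$. A priori $(\alpha_1/\alpha_2)(A_\iif(\lv))$ can be as large as $K^n$ with $K:=\sup_{\lv\in\overline U,\,i}\|A_i(\lv)\|\|A_i(\lv)^{-1}\|$, so the crude estimate $\sup_V(\alpha_1/\alpha_2)^s(A_\iif)\leq K^{ns}$ would overwhelm the Gibbs decay. The resolution is to combine Shannon--McMillan with exponential large-deviation estimates for $\mu_+$ applied to the H\"older additive cocycle $\log(\alpha_1/\alpha_2)$: the $\mu_+$-mass of the atypical cylinders decays at a rate exceeding $s\log K$, so their contribution is also summable. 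Uniformity of the H\"older and rate-function constants across $\lv\in V$ is secured by the uniform continuity of $\lv\mapsto\phi_\lv$ and the uniform dominated splitting on $\overline U$. Once this refined large-deviation estimate is in place, the $\mathcal{L}_d$-integrability of $I_s$ on $V$ follows, Frostman delivers $(\ast)$, and the reduction from the first paragraph completes the proof.
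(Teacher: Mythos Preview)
Your overall plan---reduce to Theorem~\ref{tdimGibbs} by establishing $(\ast)$ via an energy/transversality argument---is the same route the paper takes through Theorem~\ref{tss} and Proposition~\ref{pdimlow}. There is, however, one genuine gap and one place where your sketch needs more care.

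\textbf{The second alternative.} Your deduction $\dim_H(\pi^-_\lv)_*\mu_\lv\geq h_{\mu_\lv}/\chi^{s}_{\mu_\lv}(\lv)$ from Theorem~\ref{tLYGibbs} with the trivial bound $\dim_H\nu^T_{\ii_+}\geq0$ is incorrect: it would produce a planar measure of Hausdorff dimension exceeding $2$ whenever $h_{\mu_\lv}>2\chi^{s}_{\mu_\lv}(\lv)$. The displayed Ledrappier--Young formula in Theorem~\ref{tLYGibbs} contains a misprint; from its own proof one sees $\dim_H\nu=\dfrac{h_\mu}{\chi^{ss}_\mu}+\Bigl(1-\dfrac{\chi^s_\mu}{\chi^{ss}_\mu}\Bigr)\dim_H\nu^T_{\ii_+}$. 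With the correct formula the trivial bound only gives $\dim_H\nu\geq h_{\mu_\lv}/\chi^{ss}_{\mu_\lv}(\lv)$, and then $(\ast)$ plus this bound yields the sum $h/(\chi^{ss}-\chi^s)+h/\chi^{ss}$, which is one term $h/\chi^{ss}$ short of matching hypothesis~(iii). The paper closes this gap by invoking \cite[Lemma~4.12]{B}, which provides the stronger a~priori estimate $\dim_H(\pi^-_\lv)_*\mu_\lv\geq 2h_{\mu_\lv}/\chi^{ss}_{\mu_\lv}(\lv)$; this is a non-trivial additional input your argument would also need.

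\textbf{The energy estimate for $(\ast)$.} Your large-deviation treatment of atypical cylinders is a legitimate alternative to the paper's Egorov-restriction argument. But in your display $\int_V I_s(\lv)\,d\lv\leq C(s)\sum_{\iif}\mu_+([\iif])^2\sup_V(\alpha_1/\alpha_2)^s$, the measure $(\mu_\lv)_+$ depends on $\lv$, so Fubini does not directly decouple the $\lv$-integral from the symbolic integrals. The paper handles this by first invoking the uniform comparison $e^{-\varepsilon n}\leq\mu_\lv([\iif])/\mu_{\lv_0}([\iif])\leq e^{\varepsilon n}$ on a small ball around $\lv_0$ (equation~\eqref{econtmeasure}), replacing $\mu_\lv$ by the fixed $\mu_{\lv_0}$ at the cost of a factor $e^{O(\varepsilon n)}$, and by using Lemma~\ref{ltechbound} to make the integrand depend on only finitely many symbols before integrating in $\lv$. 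Your sketch would need the same decoupling step; once that is done, either Egorov or your large-deviation bound will finish the estimate.
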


The proof of Theorem~\ref{tdimGibbs2} is based on the combination of Theorem~\ref{tdimGibbs} and the following theorem.

\begin{theorem}\label{tss}
	Let $U\subset\R^d$ be an open and bounded set and let $\mathcal{A}(\lv)=\left\{A_k(\lv)\right\}_{k=1}^N$ be a parametrized family of $2\times2$ real matrices such that $\mathcal{A}(\lv)$ satisfies the strong-stable transversality on $U$. Moreover, let $\left\{\mu_{\lv}\right\}_{\lv\in U}$ be a family of $\sigma$-invariant quasi-Bernoulli ergodic measures on $\Sigma^+$ such that $\lv\mapsto h_{\mu_{\lv}}$ is continuous and for every $\lv_0\in U$ and $\varepsilon>0$ there exists a $\delta=\delta(\varepsilon,\lv_0)>0$ that for every $\ii\in\Sigma$, every $n\geq1$ and every $\|\lv-\lv_0\|<\delta$
	\begin{equation}\label{econtmeasure}
	e^{-\varepsilon n}\leq\frac{\mu_{\lv}([\ii|_0^{n-1}])}{\mu_{\lv_0}([\ii|_0^{n-1}])}\leq e^{\varepsilon n}.
	\end{equation}
	Then
	\begin{equation*}
	\dim_H(\e{\lv}{ss})_*\mu_{\lv}=\min\left\{\frac{h_{\mu_{\lv}}}{\chi^{ss}_{\mu_{\lv}}(\lv)-\chi^{s}_{\mu_{\lv}}(\lv)},1\right\}\text{ for $\mathcal{L}_d$-a.e $\lv\in U$.}
	\end{equation*}
\end{theorem}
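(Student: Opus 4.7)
The plan is the classical transversality/energy-integral strategy, complicated by the fact that both the projection $e^{ss}_\lv$ and the measure $\mu_\lv$ depend on the parameter. For the upper bound, first refine the computation in the proof of Lemma~\ref{lstrongholder} in both directions to obtain the two-sided estimate
\[
\sphericalangle(e^{ss}_\lv(\ii),e^{ss}_\lv(\jj))\asymp\frac{\alpha_2(A^{(n)}_\lv(\ii))}{\alpha_1(A^{(n)}_\lv(\ii))}\,\sphericalangle(e^{ss}_\lv(\sigma^n\ii),e^{ss}_\lv(\sigma^n\jj))
\]
whenever $\ii\wedge\jj=n$. Thus the $e^{ss}_\lv$-image of an $n$-cylinder $[\iv]$ sits inside an interval of length $\asymp(\alpha_2/\alpha_1)(A_\iv(\lv))$, and combining Shannon--McMillan--Breiman for the ergodic $\mu_\lv$ with Oseledec's theorem yields $\underline d_{(e^{ss}_\lv)_*\mu_\lv}(e^{ss}_\lv(\ii))\leq h_{\mu_\lv}/(\chi^{ss}_{\mu_\lv}(\lv)-\chi^s_{\mu_\lv}(\lv))$ for $\mu_\lv$-a.e.\ $\ii$; together with the trivial upper bound by~$1$ this gives the $\leq$ inequality for every $\lv\in U$.

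For the lower bound, fix $\lv_0\in U$, a small $\varepsilon>0$, and $s$ strictly less than $\min\{1,h_{\mu_{\lv_0}}/(\chi^{ss}_{\mu_{\lv_0}}(\lv_0)-\chi^s_{\mu_{\lv_0}}(\lv_0))\}$. Choose a neighbourhood $V=V(\lv_0,\varepsilon)\subset U$ on which~\eqref{econtmeasure} holds, and on which the matrix entries together with $h_{\mu_\lv}$, $\chi^s_{\mu_\lv}(\lv)$ and $\chi^{ss}_{\mu_\lv}(\lv)$ vary by at most $\varepsilon$. By Frostman's energy characterisation of Hausdorff dimension of a measure, it suffices to prove
\[
\int_V\iint\sphericalangle(e^{ss}_\lv(\ii),e^{ss}_\lv(\jj))^{-s}\,d\mu_\lv(\ii)\,d\mu_\lv(\jj)\,d\mathcal L_d(\lv)<\infty,
\]
for then the inner energy is finite for $\mathcal L_d$-a.e.\ $\lv\in V$, giving $\dim_H(e^{ss}_\lv)_*\mu_\lv\geq s$; letting $s$ increase to the supremum and covering $U$ by countably many such sets $V$ (with rational parameters) yields the theorem. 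Decompose the inner integral according to $n=\ii\wedge\jj$, substitute the asymptotic from the previous paragraph, and use the quasi-Bernoulli property $\mu_\lv([\iv\tilde\iv])\asymp\mu_\lv([\iv])\mu_\lv([\tilde\iv])$ to separate each $n$-cylinder head $[\iv]$ from its tail. Swapping order of integration, the transversality condition of Definition~\ref{dsstrans} together with the layer-cake formula yield the uniform bound $\int_V\sphericalangle(e^{ss}_\lv(\tilde\ii),e^{ss}_\lv(\tilde\jj))^{-s}\,d\lv\leq C_s<\infty$ whenever $s<1$ and $\tilde\ii_0\neq\tilde\jj_0$.

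Applying~\eqref{econtmeasure} to replace $\mu_\lv$ by $\mu_{\lv_0}$ on $n$-cylinders costs a factor $e^{2\varepsilon n}$, and replacing $(\alpha_1/\alpha_2)(A_\iv(\lv))^s$ by $(\alpha_1/\alpha_2)(A_\iv(\lv_0))^s$ via continuity of the matrices costs a further $e^{O(\varepsilon)n}$; the contribution at level $n$ is therefore at most
\[
C_s\,e^{O(\varepsilon)n}\sum_{|\iv|=n}\mu_{\lv_0}([\iv])^2\,(\alpha_1/\alpha_2)(A_\iv(\lv_0))^s,
\]
which by Shannon--McMillan and a Birkhoff/large-deviation bound on the H\"older cocycle $\log(\alpha_1/\alpha_2)$ (submultiplicative under dominated splitting) decays like $\exp(-n(h_{\mu_{\lv_0}}-s(\chi^{ss}_{\mu_{\lv_0}}(\lv_0)-\chi^s_{\mu_{\lv_0}}(\lv_0))+O(\varepsilon)))$. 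Taking $\varepsilon$ small compared to the positive gap $h_{\mu_{\lv_0}}-s(\chi^{ss}_{\mu_{\lv_0}}(\lv_0)-\chi^s_{\mu_{\lv_0}}(\lv_0))$ makes the series summable and closes the argument. The main obstacle is precisely the coupling between the parameter-dependence of $\mu_\lv$ and the Fubini swap needed to exploit transversality; condition~\eqref{econtmeasure} is tailored to bridge this, producing an exponential-in-$n$ comparison to a fixed reference measure that is then absorbed into the dimension gap.
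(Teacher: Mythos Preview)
Your overall strategy matches the paper's, and the upper bound is fine. The gap is in the Fubini swap for the lower bound. After the quasi-Bernoulli splitting into head $\iv$ (of length $n$) and tails, the object that must be bounded by a constant independent of $n$ is
\[
\int_V\iint_{i_0'\neq j_0'}\sphericalangle\bigl(e^{ss}_{\lv}(\ii'),e^{ss}_{\lv}(\jj')\bigr)^{-s}\,d\mu_{\lv}(\ii')\,d\mu_{\lv}(\jj')\,d\lv.
\]
You propose to swap and use the (correct) uniform bound $\int_V\sphericalangle^{-s}\,d\lv\le C_s$ coming from transversality plus layer-cake. But the inner measure $\mu_{\lv}\times\mu_{\lv}$ depends on $\lv$, so this is not a product measure and Fubini is unavailable: the conditional distribution of $\lv$ given $(\ii',\jj')$ is not Lebesgue, and a bound on $\int_V\sphericalangle^{-s}\,d\mathcal L_d(\lv)$ says nothing about the integral against that conditional. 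Condition~\eqref{econtmeasure} compares $\mu_{\lv}$ with $\mu_{\lv_0}$ only on finite cylinders, with error exponential in the cylinder length, so it cannot be applied to the infinite tails $\ii',\jj'$ and does not by itself ``bridge'' the swap.

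The ingredient the paper supplies (Lemma~\ref{ltechbound}) is a discretisation: at each dyadic scale $2^{-m}$ of the layer-cake sum, the event $\{\sphericalangle(e^{ss}_{\lv}(\ii'),e^{ss}_{\lv}(\jj'))<2^{-m}\}$ is dominated, uniformly for $\lv$ near $\lv_0$, by an event depending only on the first $N(m)\asymp m/\beta(\lv_0)$ symbols of $\ii',\jj'$. The tail integral then becomes a finite sum over cylinders of length $N(m)$, to which~\eqref{econtmeasure} legitimately applies at cost $e^{O(\varepsilon N(m))}\asymp 2^{O(\varepsilon m/\beta)}$; the remaining $\lv$-integrand now involves only fixed finite words, and transversality gives $\mathcal L_d\{\lv:\sphericalangle<2^{1-m}\}\le C\,2^{-m}$. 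Note that the extra factor $2^{O(\varepsilon m/\beta)}$ is absorbed into the condition $s<1$ (forcing $s<1-O(\varepsilon/\beta)$), not into the entropy gap $h_{\mu_{\lv_0}}-s(\chi^{ss}-\chi^s)$ as you suggest. The paper also restricts to an Egorov set $\Omega_{\lv}\subset\Sigma^+$ of nearly full $\mu_{\lv}$-measure (and uses Lusin to make the Egorov constants uniform in $\lv$) in place of your appeal to large deviations.
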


The proof uses the standard transversality method but for completeness we present it here. First, we give an upper bound for the dimension.

\begin{lemma}\label{lubss}
	Let $\mathcal{A}=\left\{A_i\right\}_{i=1}^N$ be a set of matrices satisfying the dominated splitting and let $e^{ss}:\Sigma^+\mapsto\mathbb{P}^1$ be the map to strong-stable directions. Then for every $\sigma$-invariant ergodic measure $\mu$ on $\Sigma^+$,
	$$
	\dim_H(e^{ss})_*\mu\leq\min\left\{1,\frac{h_{\mu}}{\chi^{ss}_{\mu}-\chi^{s}_{\mu}}\right\}.
	$$
\end{lemma}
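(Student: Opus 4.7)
The bound $\dim_H(e^{ss})_*\mu\leq 1$ is immediate since $(e^{ss})_*\mu$ is supported on the one-dimensional projective space $\mathbf{P}^1$. The substance of the lemma is therefore the upper bound $\dim_H(e^{ss})_*\mu\leq h_\mu/(\chi^{ss}_\mu-\chi^s_\mu)$, which I plan to establish by the standard cylinder-to-ball local dimension argument, with the key geometric input drawn from the proof of Lemma~\ref{lstrongholder} rather than from its statement.

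First I would fix a $\mu$-typical $\ii\in\Sigma^+$. Shannon--McMillan--Breiman and Oseledec's multiplicative ergodic theorem give, respectively, $\mu([\ii|_0^{n-1}])=e^{-n(h_\mu+o(1))}$ and $\alpha_j(A^{(n)}(\ii))=e^{-n(\chi^j_\mu+o(1))}$ for $j=s,ss$. Next, for any $\jj\in[\ii|_0^{n-1}]$, let $\vv$ and $\wv$ be unit vectors spanning $e^{ss}(\sigma^n\ii)$ and $e^{ss}(\sigma^n\jj)$ respectively; re-running the computation from the proof of Lemma~\ref{lstrongholder} with these vectors and with $M=A^{(n)}(\ii)=A_{i_{n-1}}\cdots A_{i_0}$, and using $\|M^{-1}\|=\alpha_2(M)^{-1}$ together with $|\det M^{-1}|=(\alpha_1(M)\alpha_2(M))^{-1}$, yields the refined pointwise estimate
\[
\sphericalangle(e^{ss}(\ii),e^{ss}(\jj))\leq C\,\frac{\alpha_2(A^{(n)}(\ii))}{\alpha_1(A^{(n)}(\ii))}=:r_n(\ii).
\]
Consequently $e^{ss}([\ii|_0^{n-1}])\subset B(e^{ss}(\ii),r_n(\ii))$ inside $\mathbf{P}^1$, and hence
\[
((e^{ss})_*\mu)\bigl(B(e^{ss}(\ii),r_n(\ii))\bigr)\ \geq\ \mu([\ii|_0^{n-1}]).
\]

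Combining the two asymptotic estimates, for $\mu$-a.e.\ $\ii$ and every $\varepsilon>0$ one has $r_n(\ii)\leq e^{-n(\chi^{ss}_\mu-\chi^s_\mu-\varepsilon)}$ and $\mu([\ii|_0^{n-1}])\geq e^{-n(h_\mu+\varepsilon)}$ for all $n$ large enough, so
\[
\underline{d}_{(e^{ss})_*\mu}(e^{ss}(\ii))\ \leq\ \liminf_{n\to\infty}\frac{\log((e^{ss})_*\mu)(B(e^{ss}(\ii),r_n(\ii)))}{\log r_n(\ii)}\ \leq\ \frac{h_\mu+\varepsilon}{\chi^{ss}_\mu-\chi^s_\mu-\varepsilon}.
\]
Letting $\varepsilon\to 0$ and invoking the standard fact that an a.e.\ upper bound on the lower local dimension transfers to the same upper bound on the Hausdorff dimension of the measure completes the argument.

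The only technical subtlety worth flagging is that the uniform H\"older rate $e^{-\beta n}$ in the statement of Lemma~\ref{lstrongholder} is in general strictly smaller than $\chi^{ss}_\mu-\chi^s_\mu$, so one cannot simply quote that lemma: the sharper pathwise rate $\alpha_2(A^{(n)})/\alpha_1(A^{(n)})$ must be extracted from the intermediate inequality appearing inside its proof, after which Oseledec's theorem converts it into the required measure-theoretic exponent. Apart from this, the proof is routine.
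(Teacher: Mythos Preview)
Your proposal is correct and follows essentially the same route as the paper's own proof. Both arguments establish the cylinder-to-ball inclusion $e^{ss}([\ii|_0^{n-1}])\subset B^{\sphericalangle}_{r_n}(e^{ss}(\ii))$ via the area/determinant computation from Lemma~\ref{ldistcomp} and Lemma~\ref{ldomsplit}\eqref{ldomsplit2} (which is exactly the intermediate inequality inside the proof of Lemma~\ref{lstrongholder}, as you correctly point out), then feed in Shannon--McMillan--Breiman and Oseledec; the only cosmetic difference is that the paper phrases the bound as $C|\det(A_{i_0}^{-1}\cdots A_{i_{n-1}}^{-1})|/\|A_{i_0}^{-1}\cdots A_{i_{n-1}}^{-1}\|^2$ and defines $n(r,\ii)$ for general $r$, whereas you rewrite this as $C\alpha_2(A^{(n)}(\ii))/\alpha_1(A^{(n)}(\ii))$ and take the liminf along the discrete sequence $r_n$.
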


\begin{proof}[Proof of Lemma~\ref{lubss}]
	For any $x\in\mathbf{P}^1$ let $B_r^{\sphericalangle}(x):=\left\{y\in\mathbf{P}^1:\sphericalangle(x,y)<r\right\}$. It is enough to show that
	\begin{equation*}
	\liminf_{r\rightarrow0+}\frac{\log(e^{ss})_*\mu(B_r^{\sphericalangle}(e^{ss}(\ii)))}{\log r}\leq\frac{h_{\mu}}{\chi^{ss}_{\mu}-\chi^{s}_{\mu}}\text{ for $\mu$-a.e. }\ii\in\Sigma^+.
	\end{equation*}
	By Lemma~\ref{ldistcomp} and Lemma~\ref{ldomsplit}\eqref{ldomsplit2}, if $\ii, \jj\in\Sigma^+$ that $i_k=j_k$ for $k=0,\dots,n$
	$$
	\sphericalangle(e^{ss}(\ii),e^{ss}(\jj))\leq\dfrac{\mathrm{Area}(A_{i_0}^{-1}\cdots A_{i_n}^{-1}\underline{v},A_{i_0}^{-1}\cdots A_{i_n}^{-1}\underline{w})}{\|A_{i_0}^{-1}\cdots A_{i_n}^{-1}|e^{ss}(\sigma^{n+1}\jj)\|\|A_{i_0}^{-1}\cdots A_{i_n}^{-1}|e^{ss}(\sigma^{n+1}\ii)\|}\leq C\dfrac{|\det(A_{i_0}^{-1}\cdots A_{i_n}^{-1})|}{\|A_{i_0}^{-1}\cdots A_{i_n}^{-1}\|^2},
	$$
	where $\underline{v}\in e^{ss}(\sigma^{n+1}\ii)$ and $\underline{w}\in e^{ss}(\sigma^{n+1}\jj)$ such that $\|\underline{v}\|=\|\underline{w}\|=1$.
	Let $n(r,\ii)\in\N$ be the smallest number such that
	$$
	\dfrac{|\det(A_{i_0}^{-1}\cdots A_{i_n}^{-1})|}{\|A_{i_0}^{-1}\cdots A_{i_n}^{-1}\|^2}<C^{-1}r.
	$$
	Hence, $(e^{ss})_*\mu(B_r^{\sphericalangle}(e^{ss}(\ii)))\geq\mu([\ii|_0^{n(r,\ii)}])$. Therefore,
	\begin{equation}\label{eneedto}
	\frac{\log(e^{ss})_*\mu(B_r^{\sphericalangle}(e^{ss}(\ii)))}{\log r}\leq\frac{\log\mu([\ii|_0^{n(r,\ii)}])}{\log C+\log|\det(A_{i_0}^{-1}\cdots A_{i_{n(r,\ii)-1}}^{-1})|-2\log\|A_{i_0}^{-1}\cdots A_{i_{n(r,\ii)-1}}^{-1}\| }
	\end{equation}
	By ergodicity and Lemma~\ref{ldomsplit}\eqref{ldomsplit2},
	\begin{eqnarray*}
		&&\lim_{n\rightarrow\infty}-\frac{1}{n}\log\mu([\ii|_0^{n}])=h_{\mu}\\
		&&\lim_{n\rightarrow\infty}-\frac{1}{n}\log|\det(A_{i_0}^{-1}\cdots A_{i_{n-1}}^{-1})|=-\chi^{ss}_{\mu}-\chi^{s}_{\mu}\\
		&&\lim_{n\rightarrow\infty}\frac{1}{n}\log\|A_{i_0}^{-1}\cdots A_{i_{n-1}}^{-1}\|=\chi^{ss}_{\mu}\text{ for }\mu\text{-a.e. }\ii\in\Sigma^+.
	\end{eqnarray*}
    Putting these limits into \eqref{eneedto} completes the proof.
\end{proof}

\begin{lemma}\label{lcontsubspace}
	Let $U\subset\R^d$ be open and bounded and let $\mathcal{A}(\lv)=\left\{A_i(\lv)\right\}_{i=1}^N$ be a parametrized family of matrices such that the map $\lv\mapsto A_i(\lv)$ is continuous for any $i=1,\dots,N$ in an open neighbourhood of $\overline{U}$, and $\mathcal{A}(\lv)$ satisfies the dominated splitting on $\overline{U}$. Then the map $\lv\mapsto e^{ss}_{\lv}(\ii)$ is uniformly continuous for every $\ii\in\Sigma^+$. That is, for every $\lv_0\in U$ and every $\varepsilon>0$ there exists a $\delta=\delta(\lv_0,\varepsilon)>0$ that
	$$
	\|\lv-\lv_0\|<\delta\implies\sphericalangle(e_{\lv}^{ss}(\ii),e_{\lv_0}^{ss}(\ii))<\varepsilon\text{ for every $\ii\in\Sigma^+$.}
	$$
\end{lemma}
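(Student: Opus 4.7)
The plan is to exploit the uniform contraction of the inverse matrix cocycle on projective space that comes from the dominated splitting, and reduce continuity in $\lv$ to continuity of a finite matrix product on a finite index set of words.

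First, I would establish that the dominated splitting constants $C,\beta$ of Definition~\ref{ddomsplit}, together with a backward-invariant multicone $M$, may be chosen \emph{uniformly} over $\lv$ in a small open neighbourhood $V\ni\lv_0$ with $\overline{V}\subset U$. Dominated splitting is an open condition in the space of $N$-tuples of matrices, being equivalent to the existence of a strictly backward-invariant cone field, and the associated constants are lower semicontinuous in the matrices. Combined with continuity of $\lv\mapsto A_i(\lv)$ and compactness of $\overline V$, one obtains uniform constants $C_0,\beta_0>0$ and a single multicone $M_0$ that is backward invariant for every $\mathcal{A}(\lv)$, $\lv\in\overline V$.

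Next, fix any $x_0\in M_0$. Arguing as in the proof of Lemma~\ref{lstrongholder} (Lemma~\ref{ldistcomp} combined with the uniform domination), for every $\ii\in\Sigma^+$, every $\lv\in\overline V$ and every $n\geq 1$,
\[
\sphericalangle\!\left(e^{ss}_{\lv}(\ii),\; A_{i_0}(\lv)^{-1}\cdots A_{i_{n-1}}(\lv)^{-1}(x_0)\right)\leq 2C_0^{\,2}e^{-\beta_0 n},
\]
since both directions lie in the image $A_{i_0}(\lv)^{-1}\cdots A_{i_{n-1}}(\lv)^{-1}(M_0)$, whose projective diameter is controlled by $|\det|/\|\cdot\|^2\leq C_0 e^{-\beta_0 n}$. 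The key point is that this bound is uniform in $\lv$ and in $\ii$.

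Given $\varepsilon>0$, I would choose $n$ so large that $2C_0^{\,2}e^{-\beta_0 n}<\varepsilon/3$. There are only $N^n$ words of length $n$, and for each fixed word $(i_0,\dots,i_{n-1})$ the map
\[
\lv\mapsto A_{i_0}(\lv)^{-1}\cdots A_{i_{n-1}}(\lv)^{-1}(x_0)\in\mathbf{P}^1
\]
is continuous on $\overline V$ by the assumed continuity of the parametrisation. Taking the minimum of the moduli of continuity over these finitely many maps at $\lv_0$ yields a single $\delta=\delta(\lv_0,\varepsilon)>0$ such that $\|\lv-\lv_0\|<\delta$ forces the length-$n$ finite approximations at $\lv$ and $\lv_0$ to be $\varepsilon/3$-close for \emph{every} word of length $n$. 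A triangle inequality using the uniform exponential bound above at both $\lv$ and $\lv_0$ then gives $\sphericalangle(e^{ss}_{\lv}(\ii),e^{ss}_{\lv_0}(\ii))<\varepsilon$ for every $\ii\in\Sigma^+$, as required.

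The delicate point is the first step: upgrading the pointwise domination hypothesis to uniform constants $C_0,\beta_0$ and a common multicone $M_0$ on a neighbourhood of $\lv_0$. Everything downstream is routine — uniform exponential contraction plus finiteness of the set of length-$n$ cylinders — and in particular the uniform-in-$\ii$ conclusion is automatic once the exponential bound is uniform in $\lv$.
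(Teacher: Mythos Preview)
Your proposal is correct and follows the same underlying mechanism as the paper --- exponential contraction of the inverse cocycle on projective space, reduced to continuity of a finite matrix product --- but the execution differs in a noteworthy way.

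The paper does not establish a uniform domination exponent $\beta_0$ on a neighbourhood. Instead it uses a self-improvement trick: writing $f(\lv,\lv_0)=\sup_{\ii}\sphericalangle(e^{ss}_{\lv}(\ii),e^{ss}_{\lv_0}(\ii))$, it splits
\[
\sphericalangle(e^{ss}_{\lv}(\ii),e^{ss}_{\lv_0}(\ii))\leq
\sphericalangle\bigl(A^{-1}_{\lv_0,m}e^{ss}_{\lv_0}(\sigma^{m+1}\ii),\,A^{-1}_{\lv_0,m}e^{ss}_{\lv}(\sigma^{m+1}\ii)\bigr)
+\sphericalangle\bigl(A^{-1}_{\lv_0,m}e^{ss}_{\lv}(\sigma^{m+1}\ii),\,A^{-1}_{\lv,m}e^{ss}_{\lv}(\sigma^{m+1}\ii)\bigr),
\]
where $A^{-1}_{\lv,m}$ abbreviates the length-$(m+1)$ inverse product. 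The first term is at most $\tfrac12 f(\lv,\lv_0)$ once $m$ is chosen large enough \emph{using only the domination at $\lv_0$}; the second is a finite-product continuity term $g(\lv,\lv_0)$. Thus $f\leq\tfrac12 f+g$, giving $f\leq 2g$, and $g\to 0$ as $\lv\to\lv_0$. This avoids your ``delicate first step'' of producing uniform constants $C_0,\beta_0$ over $\overline V$; the paper only needs that the multicone $M$ at $\lv_0$ remains backward invariant for nearby $\lv$, so that $e^{ss}_{\lv}(\sigma^{m+1}\ii)\in M$ and the norm comparison in Lemma~\ref{ldomsplit}\eqref{ldomsplit2} applies. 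Your route, by contrast, is more elementary and transparent --- a direct $\varepsilon/3$ argument with a fixed base point $x_0$ --- at the cost of first arguing that $\beta_0$ can be taken uniform on a compact neighbourhood. Both are valid; the paper's contraction inequality is slicker, while yours is conceptually cleaner and would generalise more readily to settings where one already knows uniform hyperbolicity on compacta.
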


\begin{proof}
	Let $\lv_0\in U$ and $\varepsilon>0$ be arbitrary but fixed. Let $M$ be the backward invariant multicone of $\mathcal{A}(\lv_0)$. By definition of backward invariant multicone, there exists a $\delta'=\delta'(\lv_0)>0$ that for every $\lv$ with $\|\lv-\lv_0\|<\delta'$, $M$ is a backward invariant multicone for $\mathcal{A}(\lv)$. Hence, the angles between the directions of the dominated splitting are uniformly bounded from below. Thus, by Lemma~\ref{ldomsplit}\eqref{ldomsplit2} and Lemma~\ref{ldistcomp}, there exists a constant $C=C(\lv_0)>0$ that for every for every, $m\geq0$ integer we have
	\begin{multline*}
	\sphericalangle(e_{\lv}^{ss}(\ii),e_{\lv_0}^{ss}(\ii))\leq\\
	\sphericalangle(A_{i_0}^{-1}(\lv_0)\cdots A_{i_m}^{-1}(\lv_0)e_{\lv_0}^{ss}(\sigma^{m+1}\ii),A_{i_0}^{-1}(\lv_0)\cdots A_{i_m}^{-1}(\lv_0)e_{\lv}^{ss}(\sigma^{m+1}\ii))+\\
	\sphericalangle(A_{i_0}^{-1}(\lv_0)\cdots A_{i_m}^{-1}(\lv_0)e_{\lv}^{ss}(\sigma^{m+1}\ii),A_{i_0}^{-1}(\lv)\cdots A_{i_m}^{-1}(\lv)e_{\lv}^{ss}(\sigma^{m+1}\ii))\leq\\ C(\lv_0)^22\dfrac{|\det(A_{i_0}^{-1}(\lv_0)\cdots A_{i_m}^{-1}(\lv_0))|}{\|A_{i_0}^{-1}(\lv_0)\cdots A_{i_m}^{-1}(\lv_0)\|^2}\sphericalangle(e_{\lv}^{ss}(\sigma^{m+1}\ii),e_{\lv_0}^{ss}(\sigma^{m+1}\ii))+\\
	\dfrac{\sum_{i=1}^2|A_{i_0}^{-1}(\lv)\cdots A_{i_m}^{-1}(\lv)\underline{u}_i\times A_{i_0}^{-1}(\lv_0)\cdots A_{i_m}^{-1}(\lv_0)\underline{u}_i|+|\sum_{i=1}^2A_{i_0}^{-1}(\lv)\cdots A_{i_m}^{-1}(\lv)\underline{u}_i\times A_{i_0}^{-1}(\lv_0)\cdots A_{i_m}^{-1}(\lv_0)\underline{u}_{3-i}|}{\|A_{i_0}(\lv)\cdots A_{i_m}(\lv)\|^{-1}\|A_{i_0}(\lv_0)\cdots A_{i_m}(\lv_0)\|^{-1}},
	\end{multline*}
	where $\underline{u}_1$, $\underline{u}_2$ is the standard basis of $\R^2$. Since $\mathcal{A}(\lv)$ satisfies the dominated splitting on $\overline{U}$, there exists an integer $m=m(\lv_0)>0$ that
	$$
	C(\lv_0)^22\dfrac{|\det(A_{i_0}^{-1}(\lv_0)\cdots A_{i_m}^{-1}(\lv_0))|}{\|A_{i_0}^{-1}(\lv_0)\cdots A_{i_m}^{-1}(\lv_0)\|^2}<\frac{1}{2},
	$$
	for every $i_0,\dots,i_m\in\left\{1,\dots,N\right\}$. Let $f(\lv,\lv_0):=\sup_{\ii\in\Sigma^+}\sphericalangle(e_{\lv}^{ss}(\ii),e_{\lv_0}^{ss}(\ii))$, then
	\begin{multline*}
	f(\lv,\lv_0)\leq	
	2\max_{i_0,\dots,i_m}\left\{\dfrac{\sum_{i=1}^2|A_{i_0}^{-1}(\lv)\cdots A_{i_m}^{-1}(\lv)\underline{u}_i\times A_{i_0}^{-1}(\lv_0)\cdots A_{i_m}^{-1}(\lv_0)\underline{u}_i|}{\|A_{i_0}(\lv)\cdots A_{i_m}(\lv)\|^{-1}\|A_{i_0}(\lv_0)\cdots A_{i_m}(\lv_0)\|^{-1}}\right.+\\
	\left.\frac{|\sum_{i=1}^2A_{i_0}^{-1}(\lv)\cdots A_{i_m}^{-1}(\lv)\underline{u}_i\times A_{i_0}^{-1}(\lv_0)\cdots A_{i_m}^{-1}(\lv_0)\underline{u}_{3-i}|}{\|A_{i_0}(\lv)\cdots A_{i_m}(\lv)\|^{-1}\|A_{i_0}(\lv_0)\cdots A_{i_m}(\lv_0)\|^{-1}}\right\}.
	\end{multline*}
	Since the maps $\lv\mapsto A_i(\lv)$ are continuous, there exists a $\delta=\delta(\lv_0,\varepsilon)>0$ that the right hand side is less that $\varepsilon>0$ for every $\lv$ with $\|\lv-\lv_0\|<\delta$.
\end{proof}

\begin{lemma}\label{lcontlyap}
	Let $U\subset\R^d$ be open and bounded and let $\left\{\mu_{\lv}\right\}_{\lv\in U}$ be a family of $\sigma$-invariant quasi-Bernoulli ergodic measures on $\Sigma^+$ that \eqref{econtmeasure} holds. Then the map $\lv\mapsto\mu_{\lv}$ is continuous in weak*-topology. Moreover, if $\mathcal{A}(\lv)=\left\{A_i(\lv)\right\}_{i=1}^N$ is a parametrized family of matrices that the map $\lv\mapsto A_i(\lv)$ is continuous for any $i=1,\dots,N$ in an open neighbourhood of $\overline{U}$, and for every $\lv\in\overline{U}$ the set $\mathcal{A}(\lv)$ satisfies the dominated splitting then the maps $\lv\mapsto\chi_{\mu_{\lv}}^{ss}(\lv)$ and $\lv\mapsto\chi_{\mu_{\lv}}^{s}(\lv)$ are continuous.
\end{lemma}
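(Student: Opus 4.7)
The lemma has two parts, which I handle via a common mechanism: express the Lyapunov exponents as integrals of continuous observables against $\mu_\lv$, then exploit both the weak* continuity of $\mu_\lv$ (which comes from \eqref{econtmeasure}) and the uniform-in-$\ii$ continuity of $\lv \mapsto e^{ss}_\lv(\ii)$ (which is Lemma~\ref{lcontsubspace}).

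\textit{Step 1: weak* continuity of $\lv \mapsto \mu_\lv$.} Fix $\lv_0 \in U$ and a cylinder $[\iv]$ of length $n_0$. Given $\eta > 0$, choosing $\varepsilon > 0$ with $e^{\varepsilon n_0} < 1 + \eta$ and applying \eqref{econtmeasure} yields $\mu_\lv([\iv]) \to \mu_{\lv_0}([\iv])$ as $\lv \to \lv_0$. Since $\Sigma^+$ is compact and totally disconnected, every $f \in C(\Sigma^+)$ is a uniform limit of finite linear combinations of cylinder indicators, so a standard $\varepsilon/3$ argument upgrades cylinder convergence to $\int f\, d\mu_\lv \to \int f\, d\mu_{\lv_0}$, which is weak* convergence.

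\textit{Step 2: continuity of $\chi^{ss}$.} Set $\psi_\lv(\ii) := \log\|A_{i_0}(\lv) | e^{ss}_\lv(\ii)\|$. Telescoping along the forward invariance $A_{i_0}(\lv) e^{ss}_\lv(\ii) = e^{ss}_\lv(\sigma\ii)$, combined with Lemma~\ref{ldomsplit}\eqref{ldomsplit2} and Birkhoff's theorem for the ergodic $\mu_\lv$, gives $\chi^{ss}_{\mu_\lv}(\lv) = -\int \psi_\lv\, d\mu_\lv$. I split
$$
\chi^{ss}_{\mu_\lv}(\lv) - \chi^{ss}_{\mu_{\lv_0}}(\lv_0) = -\int (\psi_\lv - \psi_{\lv_0})\, d\mu_\lv - \Big(\int \psi_{\lv_0}\, d\mu_\lv - \int \psi_{\lv_0}\, d\mu_{\lv_0}\Big).
$$
For the first summand, Lemma~\ref{lcontsubspace} provides $\sphericalangle(e^{ss}_\lv(\ii), e^{ss}_{\lv_0}(\ii)) \to 0$ uniformly in $\ii$ as $\lv \to \lv_0$; combined with continuity of $\lv \mapsto A_i(\lv)$ and the uniform positivity of $\|A_i(\lv) | \theta\|$ on $\overline{U}$ (compactness plus the non-singularity implicit in dominated splitting), this yields $\psi_\lv \to \psi_{\lv_0}$ uniformly on $\Sigma^+$. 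For the second summand, Lemma~\ref{lstrongholder} makes $\psi_{\lv_0}$ continuous on $\Sigma^+$, so Step 1 applies.

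\textit{Step 3: continuity of $\chi^s$, and main obstacle.} From $\alpha_1(A^{(n)})\alpha_2(A^{(n)}) = |\det A^{(n)}|$ and Birkhoff, $\chi^s_{\mu_\lv}(\lv) + \chi^{ss}_{\mu_\lv}(\lv) = -\int \log|\det A_{i_0}(\lv)|\, d\mu_\lv$. The integrand depends only on $i_0$ (hence is continuous on $\Sigma^+$) and varies continuously in $\lv$ uniformly in $\ii$, so the weak* continuity from Step 1 makes the right-hand side continuous, and subtracting the conclusion of Step 2 gives continuity of $\chi^s$. The main technical subtlety is the simultaneous variation of the measure and the cocycle in the $\chi^{ss}$ formula; the uniform-in-$\ii$ control of $e^{ss}_\lv$ supplied by Lemma~\ref{lcontsubspace} is precisely the ingredient that makes the swap argument succeed.
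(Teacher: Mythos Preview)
Your proof is correct and follows essentially the same strategy as the paper: establish weak* continuity of $\lv\mapsto\mu_\lv$ from \eqref{econtmeasure}, express $\chi^{ss}$ as the integral of an observable built from $e^{ss}_\lv$ whose uniform-in-$\ii$ continuity comes from Lemma~\ref{lcontsubspace}, and recover $\chi^s$ via the determinant identity. The only cosmetic differences are that the paper verifies weak* convergence through the Portmanteau criterion ($\liminf$ on open sets decomposed into disjoint cylinders) rather than your density-of-step-functions argument, and it writes the $\chi^{ss}$ integrand as $\log\|A_{i_0}^{-1}(\lv)|e^{ss}_\lv(\sigma\ii)\|$ rather than $\log\|A_{i_0}(\lv)|e^{ss}_\lv(\ii)\|$; these two observables are negatives of one another by the invariance relation, so the formulas coincide.
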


\begin{proof}
	To prove the first assertion of the lemma it is enough to show that for every $O\subseteq\Sigma^+$ open set and every $\lv_0\in U$
	\begin{equation}\label{eweakenough}
	\liminf_{\lv\mapsto\lv_0}\mu_{\lv}(O)\geq\mu_{\lv_0}(O).
	\end{equation}
	Since the cylinder sets form a base of open sets we get $O=\bigcup_{k=1}^{\infty}[\ii_k|_{n_k} ^{m_k}]$. Since for every cylinder $[\ii_k|_{n_k} ^{m_k}]=\bigcup_{|\underline{j}|=n_k}[\underline{j}\sigma^{n_k}\ii_k|_{0} ^{m_k}]$ without loss of generality we may write $O=\bigcup_{k=1}^{\infty}[\ii_k|_{0} ^{m_k}]$. On the other hand, for every pair of cylinder sets of the form $[\ii_k|_{0} ^{m_k}]$ either they are disjoint or one contains the other, thus, we may assume that $[\ii_k|_{0} ^{m_k}]\cap[\ii_l|_{0} ^{m_l}]=\emptyset$ if $k\neq l$. Hence,
	\begin{equation*}
	\mu_{\lv_0}(O)=\lim_{n\rightarrow\infty}\sum_{\substack{|\underline{i}|=n \\ [\underline{i}]\subseteq O}}\mu_{\lv_0}([\underline{i}]).
	\end{equation*}
	Therefore, by \eqref{econtmeasure} for every $n\geq1$
	$$
	\liminf_{\lv\rightarrow\lv_0}\mu_{\lv}(O)\geq\liminf_{\lv\rightarrow\lv_0}\sum_{\substack{|\underline{i}|=n \\ [\underline{i}]\subseteq O}}\mu_{\lv}([\underline{i}])=\sum_{\substack{|\underline{i}|=n \\ [\underline{i}]\subseteq O}}\mu_{\lv_0}([\underline{i}]).
	$$
	Since $n\geq1$ was arbitrary we get \eqref{eweakenough}.
	
	To prove the second assertion, by Lemma~\ref{ldomsplit}\eqref{ldomsplit2} and multiplicative ergodic theorem
	$$
	\chi_{\mu_{\lv}}^{ss}(\lv)=\int\log\|A_{i_0}^{-1}(\lv)|e^{ss}_{\lv}(\sigma\ii)\|d\mu_{\lv}(\ii)\text{ and }	\chi_{\mu_{\lv}}^{ss}(\lv)+\chi_{\mu_{\lv}}^{s}(\lv)=\int\log|\det(A_{i_0}^{-1}(\lv))|d\mu_{\lv}(\ii).
	$$
	By Lemma~\ref{lcontsubspace}, the map $\lv\mapsto\log\|A_{i_0}^{-1}(\lv)|e^{ss}_{\lv}(\sigma\ii)\|$ is continuous, thus by the weak*-continuity of $\lv\mapsto\mu_{\lv}$, the map $\lv\mapsto\chi_{\mu}^{ss}(\lv)$ is continuous. The continuity of $\lv\mapsto\chi_{\mu_{\lv}}^{s}(\lv)$ follows by the continuity of $\lv\mapsto\mu_{\lv}$, $\lv\mapsto\chi_{\mu_{\lv}}^{ss}(\lv)$ and $\lv\mapsto\log|\det(A_{i_0}^{-1}(\lv))|$.
\end{proof}

\begin{prop}\label{pdimlow}
	Assume that the assumptions of Theorem~\ref{tss} hold. Then for every $\lv_0\in U$ and $\varepsilon>0$ there exists a $\delta>0$ such that
	$$
	\dim_H(e^{ss}_{\lv})_*\mu_{\lv}\geq\min\left\{1,\frac{h_{\mu_{\lv_0}}}{\chi_{\mu_{\lv_0}}^{ss}(\lv_0)-\chi_{\mu_{\lv_0}}^{s}(\lv_0)}\right\}-\varepsilon\text{ for $\mathcal{L}_d$-a.e. $\lv\in B_{\delta}(\lv_0)$}.
	$$
\end{prop}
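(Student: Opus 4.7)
The plan is to apply the potential-theoretic criterion $I_s(\nu)<\infty\Rightarrow\dim_H\nu\geq s$ to $\nu=(e^{ss}_\lv)_*\mu_\lv$, and to establish that the integrated $s$-energy $\int_{B_\delta(\lv_0)}I_s((e^{ss}_\lv)_*\mu_\lv)\,d\lv$ is finite for every $s$ strictly below the target value $\min\{1,h_{\mu_{\lv_0}}/(\chi^{ss}_{\mu_{\lv_0}}(\lv_0)-\chi^s_{\mu_{\lv_0}}(\lv_0))\}$. Using Shannon--McMillan--Breiman for $\mu_{\lv_0}$ and Oseledec's theorem at $\lv_0$, I would first pick a ``regular'' subset $G_N\subset\Sigma^+$ of $\mu_{\lv_0}$-measure close to $1$ on which both $\mu_{\lv_0}([\ii|_0^{n-1}])\leq e^{-(h_{\mu_{\lv_0}}-\epsilon')n}$ and $\alpha_2(A_{\ii|_0^{n-1}}(\lv_0))/\alpha_1(A_{\ii|_0^{n-1}}(\lv_0))\leq e^{-(\chi^{ss}_{\mu_{\lv_0}}(\lv_0)-\chi^s_{\mu_{\lv_0}}(\lv_0)-\epsilon')n}$ hold for all $n\geq N$, and work with the restricted energy $\int_{G_N}\int\sphericalangle(e^{ss}_\lv(\ii),e^{ss}_\lv(\jj))^{-s}\,d\mu_{\lv_0}(\jj)\,d\mu_{\lv_0}(\ii)$.

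The geometric core is the angle decomposition supplied by Lemma~\ref{lstrongholder} and Lemma~\ref{ldistcomp}: for $\ii,\jj\in\Sigma^+$ with $\ii\wedge\jj=n$ and $\iv=\ii|_0^{n-1}$,
\[
\sphericalangle\bigl(e^{ss}_\lv(\ii),e^{ss}_\lv(\jj)\bigr)\asymp\dfrac{\alpha_2(A_\iv(\lv))}{\alpha_1(A_\iv(\lv))}\,\sphericalangle\bigl(e^{ss}_\lv(\sigma^n\ii),e^{ss}_\lv(\sigma^n\jj)\bigr),
\]
with the prefactor uniformly comparable (up to $e^{\pm\epsilon''n}$) to its $\lv_0$-value on $B_\delta(\lv_0)$ by Lemma~\ref{lcontsubspace} and Lemma~\ref{lcontlyap}. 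Since $\sigma^n\ii$ and $\sigma^n\jj$ disagree in coordinate $0$, the strong-stable transversality applies directly, and a Cavalieri layer-cake calculation bounds $\int_{B_\delta(\lv_0)}\sphericalangle(e^{ss}_\lv(\sigma^n\ii),e^{ss}_\lv(\sigma^n\jj))^{-s}d\lv$ by a constant $C_s$ for every $s<1$. Combining yields
\[
\int_{B_\delta(\lv_0)}\sphericalangle\bigl(e^{ss}_\lv(\ii),e^{ss}_\lv(\jj)\bigr)^{-s}d\lv\;\leq\;C\,e^{ns(\chi^{ss}_{\mu_{\lv_0}}(\lv_0)-\chi^s_{\mu_{\lv_0}}(\lv_0)+\epsilon'')}
\]
for every $\ii\in G_N$ and $n\geq N$.

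Next I would swap the order of integration by Fubini and sum over $n=\ii\wedge\jj$. For $\ii\in G_N$ the fibre satisfies $\mu_{\lv_0}\{\jj:\ii\wedge\jj=n\}\leq\mu_{\lv_0}([\ii|_0^{n-1}])\leq e^{-(h_{\mu_{\lv_0}}-\epsilon')n}$, so the integrated restricted energy is dominated by a geometric series $\sum_{n\geq N}e^{-n[(h_{\mu_{\lv_0}}-\epsilon')-s(\chi^{ss}_{\mu_{\lv_0}}(\lv_0)-\chi^s_{\mu_{\lv_0}}(\lv_0)+\epsilon'')]}$ that converges whenever $s(\chi^{ss}-\chi^s+\epsilon'')<h-\epsilon'$. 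Choosing $\epsilon',\epsilon''$ small enough covers every $s<h_{\mu_{\lv_0}}/(\chi^{ss}_{\mu_{\lv_0}}(\lv_0)-\chi^s_{\mu_{\lv_0}}(\lv_0))-\varepsilon/2$, while the Cavalieri constraint $s<1$ covers $s<1-\varepsilon/2$. Fubini then forces, for $\mathcal{L}_d$-a.e.\ $\lv\in B_\delta(\lv_0)$, the local $s$-energy $\int\sphericalangle(e^{ss}_\lv(\ii),\cdot)^{-s}d\mu_{\lv_0}$ to be finite at $\mu_{\lv_0}$-a.e.\ $\ii\in G_N$, giving lower local dimension $\geq s$ for $(e^{ss}_\lv)_*\mu_{\lv_0}$ on a set of positive $(e^{ss}_\lv)_*\mu_{\lv_0}$-measure, and hence $\dim_H(e^{ss}_\lv)_*\mu_{\lv_0}\geq s$.

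The main obstacle is transferring this lower bound from $(e^{ss}_\lv)_*\mu_{\lv_0}$ to $(e^{ss}_\lv)_*\mu_\lv$, since as two different ergodic measures $\mu_\lv$ and $\mu_{\lv_0}$ will typically be mutually singular. The key observation is that a $\sphericalangle$-ball of radius $r$ in $\mathbf{P}^1$ is essentially covered by cylinders of level $n(r)\asymp-\log r/(\chi^{ss}_{\mu_{\lv_0}}(\lv_0)-\chi^s_{\mu_{\lv_0}}(\lv_0))$, so by \eqref{econtmeasure} its $\mu_\lv$-preimage has mass at most $e^{\epsilon n(r)}=r^{-\epsilon/(\chi^{ss}-\chi^s)}$ times its $\mu_{\lv_0}$-preimage. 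This uniform pointwise comparison transfers the local lower dimension bound to $(e^{ss}_\lv)_*\mu_\lv$ at the cost of an additive $\epsilon/(\chi^{ss}-\chi^s)$, which is absorbed into the statement's $\varepsilon$ by fixing the constant $\epsilon$ of \eqref{econtmeasure} (and hence $\delta$) small from the outset in terms of $\varepsilon$ and the Lyapunov gap $\chi^{ss}_{\mu_{\lv_0}}(\lv_0)-\chi^s_{\mu_{\lv_0}}(\lv_0)$.
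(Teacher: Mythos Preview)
Your energy-method strategy is exactly the one the paper uses: decompose by the common prefix length, pull back the angle by $\alpha_2/\alpha_1$ of the prefix, apply strong-stable transversality to the tails, and sum a geometric series. The paper also inserts a technical reduction (its Lemma~\ref{ltechbound}) so that after stripping the common prefix one may replace the tails $\sigma^n\ii,\sigma^n\jj$ by finite-level approximations $\sigma^n\ii|_0^N\overline{1}$ before swapping the $\lv$- and $(\ii,\jj)$-integrals; your ``Cavalieri'' step glosses over this, but it is routine.

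The genuine gap is your final transfer from $(e^{ss}_\lv)_*\mu_{\lv_0}$ to $(e^{ss}_\lv)_*\mu_\lv$. Your ball comparison is essentially correct (with $n(r)\asymp-\log r/\beta$ rather than $-\log r/(\chi^{ss}-\chi^s)$, since you need the \emph{uniform} contraction rate from Definition~\ref{ddomsplit} to guarantee that every level-$n(r)$ cylinder has small $e^{ss}_\lv$-image), and it does yield the pointwise inequality
\[
\underline d_{(e^{ss}_\lv)_*\mu_\lv}(\theta)\;\geq\;\underline d_{(e^{ss}_\lv)_*\mu_{\lv_0}}(\theta)-\epsilon/\beta\quad\text{for every }\theta.
\]
But the energy bound you established is for the \emph{restricted} push-forward $(e^{ss}_\lv)_*(\mu_{\lv_0}|_{G_N})$, not for $(e^{ss}_\lv)_*\mu_{\lv_0}$. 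The resulting inequality $\underline d_{(e^{ss}_\lv)_*(\mu_{\lv_0}|_{G_N})}\geq s$ is \emph{weaker} than $\underline d_{(e^{ss}_\lv)_*\mu_{\lv_0}}\geq s$ (a smaller measure has larger lower local dimension), so the two inequalities do not chain. Nor can you pass from $\dim_H(e^{ss}_\lv)_*(\mu_{\lv_0}|_{G_N})\geq s$ to $\dim_H(e^{ss}_\lv)_*\mu_\lv\geq s$: the set $G_N$ is produced by Egorov for $\mu_{\lv_0}$ and is not a union of cylinders of bounded level, so \eqref{econtmeasure} gives no control whatsoever on $\mu_\lv(G_N)$, which may well be zero since $\mu_\lv$ and $\mu_{\lv_0}$ are typically mutually singular.

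The paper's remedy is to run the energy calculation with $\mu_\lv$ (not $\mu_{\lv_0}$) from the outset. One applies Egorov for each $\lv$ separately to obtain $\Omega_\lv$ with $\mu_\lv(\Omega_\lv)>1-\varepsilon$ and a constant $C(\lv)$ witnessing the entropy and Lyapunov bounds; the $\lv$-dependence of $C(\lv)$ is then controlled by Lusin's theorem, which yields $J_\delta(\lv_0)\subset B_\delta(\lv_0)$ of nearly full Lebesgue measure on which $C(\lv)\leq C$ uniformly. The integrated energy $\int_{J_\delta(\lv_0)}\iint\sphericalangle^{-s}\,d\widetilde\mu_\lv\,d\widetilde\mu_\lv\,d\lv$ with $\widetilde\mu_\lv=\mu_\lv|_{\Omega_\lv}$ is then bounded exactly along the lines you outline (with \eqref{econtmeasure} and the quasi-Bernoulli property used to compare $\mu_\lv$-cylinders to $\mu_{\lv_0}$-cylinders inside the sum), and Frostman gives $\dim_H(e^{ss}_\lv)_*\widetilde\mu_\lv\geq s$ directly for $\mathcal L_d$-a.e.\ $\lv\in J_\delta(\lv_0)$, with no transfer step required.
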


Before we prove Proposition~\ref{pdimlow}, we prove that for every $\lv\in U$ the map $\ii\mapsto e_{\lv}^{ss}(\ii)$ is H\"older continuous.

\begin{lemma}\label{ltechbound}
	For every $\lv_0\in U$ there exists a $\delta=\delta(\lv_0)>0$ and for every $r>0$ there exists a positive integer $N=N(\lv_0,r)$ that for every $\lv\in U$ with $\|\lv-\lv_0\|<\delta$ and for every $\ii,\jj\in\Sigma^+$ with $i_0\neq j_0$
	$$
	\mathbb{I}\left\{\sphericalangle(e^{ss}_{\lv}(\ii),e^{ss}_{\lv}(\jj))<r\right\}\leq\mathbb{I}\left\{\sphericalangle(e^{ss}_{\lv}(\ii|_0^N\overline{1}),e^{ss}_{\lv}(\jj|_0^N\overline{1}))<2r\right\},
	$$
	where $\overline{1}=(1,1,\dots)\in\Sigma^+$ and $\mathbb{I}$ denotes the indicator function. Precisely, $N(\lv_0,r)=\lceil\frac{2\log r}{-\beta(\lv_0)}+c(\lv_0)\rceil$, where $\beta(\lv_0)$ is the domination exponent in Definition~\ref{ddomsplit} and $c(\lv_0)$ is some constant depending only on $\lv_0$.
\end{lemma}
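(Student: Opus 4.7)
The plan is to use the Hölder estimate of Lemma~\ref{lstrongholder} to argue that truncating a sequence past some position $N$ and replacing the tail with $\overline{1}$ perturbs $e^{ss}_{\lv}$ by at most $C'e^{-\beta'(N+1)}$, so the event controlling $\sphericalangle(e^{ss}_{\lv}(\ii),e^{ss}_{\lv}(\jj))<r$ is contained in the event controlling $\sphericalangle(e^{ss}_{\lv}(\ii|_0^N\overline{1}),e^{ss}_{\lv}(\jj|_0^N\overline{1}))<2r$ once $C'e^{-\beta'(N+1)}\leq r/2$. The values $\beta'$ and $C'$ will be uniform constants over a small ball $B_\delta(\lv_0)$.

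Concretely, I would first fix $\lv_0\in U$. Since $\mathcal{A}(\lv_0)$ satisfies dominated splitting and the map $\lv\mapsto A_i(\lv)$ is continuous, the backward-invariant multicone for $\mathcal{A}(\lv_0)$ remains backward invariant for $\mathcal{A}(\lv)$ for all $\lv$ sufficiently close to $\lv_0$; by shrinking this into a slightly larger multicone and using compactness, there is a $\delta=\delta(\lv_0)>0$ and constants $C'=C'(\lv_0)>0$, $\beta'=\beta'(\lv_0)>0$ with $\beta'\geq \beta(\lv_0)/2$, such that the domination estimate of Definition~\ref{ddomsplit} holds uniformly in $\lv\in B_\delta(\lv_0)$ with constants $C',\beta'$. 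Running the proof of Lemma~\ref{lstrongholder} with these uniform constants gives
\[
\sphericalangle(e^{ss}_{\lv}(\ii),e^{ss}_{\lv}(\jj))\leq C'e^{-\beta'(\ii\wedge\jj)}
\qquad\text{for every } \lv\in B_\delta(\lv_0),\ \ii,\jj\in\Sigma^+.
\]

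Next, since $\ii$ and $\ii|_0^N\overline{1}$ agree in positions $0,\dots,N$, the above bound gives $\sphericalangle(e^{ss}_{\lv}(\ii),e^{ss}_{\lv}(\ii|_0^N\overline{1}))\leq C'e^{-\beta'(N+1)}$, and the same for $\jj$. By the triangle inequality in $\mathbf{P}^1$,
\[
\sphericalangle\bigl(e^{ss}_{\lv}(\ii|_0^N\overline{1}),e^{ss}_{\lv}(\jj|_0^N\overline{1})\bigr)\leq \sphericalangle\bigl(e^{ss}_{\lv}(\ii),e^{ss}_{\lv}(\jj)\bigr)+2C'e^{-\beta'(N+1)}.
\]
Hence, choosing $N$ large enough that $2C'e^{-\beta'(N+1)}\leq r$, any pair $\ii,\jj$ with $\sphericalangle(e^{ss}_{\lv}(\ii),e^{ss}_{\lv}(\jj))<r$ automatically satisfies $\sphericalangle(e^{ss}_{\lv}(\ii|_0^N\overline{1}),e^{ss}_{\lv}(\jj|_0^N\overline{1}))<2r$, which is exactly the claimed pointwise inequality between indicator functions. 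Solving for $N$ gives the threshold $N+1\geq (\log(2C')-\log r)/\beta'$; using $\beta'\geq\beta(\lv_0)/2$ yields the explicit form $N=\lceil 2\log r/(-\beta(\lv_0))+c(\lv_0)\rceil$ with $c(\lv_0):=2\log(2C')/\beta(\lv_0)-1$.

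The main obstacle is the uniform step: extracting constants $C',\beta'$ valid for all parameters in a full neighbourhood $B_\delta(\lv_0)$ and for \emph{all} sequences simultaneously. This is what forces the factor $2$ in the exponent (relative to the naive $1/\beta(\lv_0)$ rate), and is the reason the hypothesis only gives the bound in a neighbourhood rather than at $\lv_0$ itself. Once uniform domination is in hand, the remainder is a direct triangle-inequality argument, and the independence of $N$ on the specific pair $\ii,\jj$ (with $i_0\neq j_0$) is automatic because the Hölder estimate holds uniformly in $\Sigma^+$.
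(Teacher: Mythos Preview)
Your proof is correct and follows essentially the same approach as the paper's: both use the triangle inequality together with the H\"older estimate for $e^{ss}$ (Lemma~\ref{lstrongholder}) to show that replacing tails past position $N$ by $\overline{1}$ perturbs the angle by at most $r$, and both secure uniform constants on a neighbourhood $B_\delta(\lv_0)$. The only cosmetic difference is that the paper transfers the estimate at $\lv$ back to $\lv_0$ via continuity (incurring an $e^{\delta N/2}$ factor) and then applies domination at $\lv_0$, whereas you establish domination with exponent $\beta'\geq\beta(\lv_0)/2$ uniformly on $B_\delta(\lv_0)$ and invoke Lemma~\ref{lstrongholder} directly; these are equivalent routes to the same $N(\lv_0,r)$.
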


\begin{proof}
	Fix $\lv_0\in U$. Then by Lemma~\ref{ldistcomp} for every $N$ and every $\ii,\jj\in\Sigma^+$ with $i_0\neq j_0$
	\begin{multline*}
	|\sphericalangle(e^{ss}_{\lv}(\ii),e^{ss}_{\lv}(\jj))-\sphericalangle(e^{ss}_{\lv}(\ii|_0^N\overline{1}),e^{ss}_{\lv}(\jj|_0^N\overline{1}))|\leq\sphericalangle(e^{ss}_{\lv}(\ii),e^{ss}_{\lv}(\ii|_0^N\overline{1}))+\sphericalangle(e^{ss}_{\lv}(\jj),e^{ss}_{\lv}(\jj|_0^N\overline{1}))\leq\\
	2\frac{|\det(A_{i_0}^{-1}(\lv)\cdots A_{i_{N}}^{-1}(\lv))|}{\|A_{j_0}^{-1}(\lv)\cdots A_{j_{N}}^{-1}(\lv)|e^{ss}_{\lv}(\sigma^{N+1}\ii)\|\|A_{j_0}^{-1}(\lv)\cdots A_{j_{N}}^{-1}(\lv)|e^{ss}_{\lv}(\overline{1})\|}\sphericalangle(e^{ss}_{\lv}(\sigma^{N+1}\ii),e^{ss}_{\lv}(\overline{1}))+\\
	2\frac{|\det(A_{j_0}^{-1}(\lv)\cdots A_{j_{N}}^{-1}(\lv))|}{\|A_{j_0}^{-1}(\lv)\cdots A_{j_{N}}^{-1}(\lv)|e^{ss}_{\lv}(\sigma^{N+1}\jj)\|\|A_{j_0}^{-1}(\lv)\cdots A_{j_{N}}^{-1}(\lv)|e^{ss}_{\lv}(\overline{1})\|}\sphericalangle(e^{ss}_{\lv}(\sigma^{N+1}\jj),e^{ss}_{\lv}(\overline{1})).
	\end{multline*}
	Since $\lv\mapsto A_i(\lv)$ is continuous, by Lemma~\ref{lcontsubspace}, there exists a $\delta=\delta(\lv_0)>0$ that
	\begin{multline*}
	\frac{|\det(A_{j_0}^{-1}(\lv)\cdots A_{j_{N}}^{-1}(\lv))|}{\|A_{j_0}^{-1}(\lv)\cdots A_{j_{N}}^{-1}(\lv)|e^{ss}_{\lv}(\sigma^{N+1}\jj)\|\|A_{j_0}^{-1}(\lv)\cdots A_{j_{N}}^{-1}(\lv)|e^{ss}_{\lv}(\overline{1})\|}\leq\\
	e^{\frac{\delta(\lv_0)}{2} N}\frac{|\det(A_{j_0}^{-1}(\lv_0)\cdots A_{j_{N}}^{-1}(\lv_0))|}{\|A_{j_0}^{-1}(\lv_0)\cdots A_{j_{N}}^{-1}(\lv_0)|e^{ss}_{\lv_0}(\sigma^{N+1}\jj)\|\|A_{j_0}^{-1}(\lv_0)\cdots A_{j_{N}}^{-1}(\lv_0)|e^{ss}_{\lv_0}(\overline{1})\|}
	\end{multline*}
	for every $\jj\in\Sigma^+$. Thus, by Lemma~\ref{ldomsplit}\eqref{ldomsplit2}
	\begin{multline*}
	|\sphericalangle(e^{ss}_{\lv}(\ii),e^{ss}_{\lv}(\jj))-\sphericalangle(e^{ss}_{\lv}(\ii|_0^N\overline{1}),e^{ss}_{\lv}(\jj|_0^N\overline{1}))|\leq
	2\pi e^{\frac{\delta(\lv_0)}{2} N}C(\lv_0)^2\max_{j_0,\dots,j_N}\left\{\frac{|\det(A_{j_0}^{-1}(\lv_0)\cdots A_{j_{N}}^{-1}(\lv_0))|}{\|A_{j_0}^{-1}(\lv_0)\cdots A_{j_{N}}^{-1}(\lv_0)\|^2}\right\}
	\end{multline*}
	By Definition~\ref{ddomsplit}, there exists an $N=N(\lv_0,r)$ that the right hand side of the inequality is less than $r$, thus the statement follows.
\end{proof}

\begin{proof}[Proof of Proposition~\ref{pdimlow}]
	Let $\lv_0\in U$  and $\varepsilon>0$ be arbitrary but fixed. Let $\delta=\delta(\lv_0,\varepsilon)>0$ be chosen according to Lemma~\ref{lcontsubspace}, Lemma~\ref{ltechbound} and \eqref{econtmeasure}. By Shannon-McMillan-Breiman Theorem and \eqref{econtmeasure}, for every $\lv\in B_{\delta}(\lv_0)$
	\begin{equation*}
	h_{\mu_{\lv_0}}-\varepsilon\leq\liminf_{n\rightarrow\infty}-\frac{1}{n}\log\mu_{\lv}([\ii|_0^{n-1}])\leq\limsup_{n\rightarrow\infty}-\frac{1}{n}\log\mu_{\lv}([\ii|_0^{n-1}])
	\leq h_{\mu_{\lv_0}}+\varepsilon\text{ for $\mu_{\lv}$-a.e. $\ii\in\Sigma^+$}.
	\end{equation*}
	Moreover, by ergodic theorem and weak*-continuity of $\lv\mapsto\mu_{\lv}$
	\begin{equation*}
	\chi^{ss}_{\mu_{\lv_0}}(\lv_0)+\chi^{s}_{\mu_{\lv_0}}(\lv_0)-\varepsilon\leq\lim_{n\rightarrow\infty}\frac{1}{n}\log|\det(A_{i_0}^{-1}(\lv)\cdots A_{i_{n-1}}^{-1}(\lv))|\leq\chi^{ss}_{\mu_{\lv_0}}(\lv_0)+\chi^{s}_{\mu_{\lv_0}}(\lv_0)+\varepsilon,
	\end{equation*}
	\begin{equation*}
	\chi^{ss}_{\mu_{\lv_0}}(\lv_0)-\varepsilon\leq\lim_{n\rightarrow\infty}\frac{1}{n}\log\|A_{i_0}^{-1}(\lv)\cdots A_{i_{n-1}}^{-1}(\lv)|e^{ss}_{\lv}(\sigma^n\ii)\|\leq \chi^{ss}_{\mu_{\lv_0}}(\lv_0)+\varepsilon
	\end{equation*}
	for $\mu_{\lv}$-a.e. $\ii\in\Sigma^+$. By Egorov's theorem for every $\lv\in B_{\delta}(\lv_0)$ there exists a set $\Omega_{\lv}\subseteq\Sigma^+$ that $\mu(\Omega_{\lv})>1-\varepsilon$ and there exist a constant $C(\lv)>1$ that for every $\ii\in\Sigma^+$ and every $n,m\geq1$
	\begin{equation*}
	C(\lv)^{-1}\mu_{\lv}([\ii|_0^{n-1}])\mu_{\lv}([\sigma^n\ii|_0^{m-1}])\leq\mu_{\lv}([\ii|_0^{n+m-1}])\leq C(\lv)\mu_{\lv}([\ii|_0^{n-1}])\mu_{\lv}([\sigma^n\ii|_0^{m-1}])	
	\end{equation*}
	and for every $\ii\in\Omega_{\lv}$ and every $n\geq1$
	\begin{equation}\label{eentaprox}
	C(\lv)^{-1}e^{-n(h_{\mu_{\lv_0}}+2\varepsilon)}\leq\mu_{\lv}([\ii|_0^{n-1}])\leq C(\lv)e^{-n(h_{\mu_{\lv_0}}-2\varepsilon)},
	\end{equation}
	\begin{equation}\label{elyapexp}
	C(\lv)^{-1}e^{-n(\chi_{\mu_{\lv_0}}^{ss}(\lv_0)-\chi_{\mu_{\lv_0}}^{s}(\lv_0)+6\varepsilon)}\leq\frac{|\det(A_{i_0}^{-1}(\lv)\cdots A_{i_{n-1}}^{-1}(\lv))|}{\|A_{i_0}^{-1}(\lv)\cdots A_{i_{n-1}}^{-1}(\lv)\|^2}\leq C(\lv)e^{-n(\chi_{\mu_{\lv_0}}^{ss}(\lv_0)-\chi_{\mu_{\lv_0}}^{s}(\lv_0)-6\varepsilon)}.
	\end{equation}
	By Lusin's theorem for every $\varepsilon'>0$ there exists a set $J_{\delta}(\lv_0)\subseteq B_{\delta}(\lv_0)$ that $\mathcal{L}_d(B_{\delta}(\lv_0)/J_{\delta}(\lv_0))<\varepsilon'$ and there exists a $C>1$ that $C(\lv)\leq C$ for every $\lv\in J_{\delta}(\lv_0)$. Denote the measure $\widetilde{\mu}_{\lv}:=\left.\mu\right|_{\Omega_{\lv}}$ and for a finite length word $\underline{k}=(k_0,\dots,k_{n-1})$ denote the set
	$$
	\Sigma_{\underline{k}}:=\left\{(\ii,\jj)\in\Sigma^+:i_m=j_m=k_m\text{ for }m=0,\dots,n-1\text{ and }i_{n}\neq j_{n}\right\}.
	$$
	Then for every $s>0$ by Lemma~\ref{lcontsubspace}, the continuity of $\lv\mapsto A_i(\lv)$ and \eqref{elyapexp}
	\begin{multline*}
	\mathcal{I}:=\int_{J_{\delta}(\lv_0)}\iint\sphericalangle(e^{ss}_{\lv}(\ii)),e^{ss}_{\lv}(\jj))^{-s}d\widetilde{\mu}_{\lv}(\ii)d\widetilde{\mu}_{\lv}(\jj)d\lv=\\
	\sum_{n=0}^{\infty}\sum_{\underline{k}=n}\int_{J_{\delta}(\lv_0)}\iint_{\Sigma_{\underline{k}}}\sphericalangle(e^{ss}_{\lv}(\ii)),e^{ss}_{\lv}(\jj))^{-s}d\widetilde{\mu}_{\lv}(\ii)d\widetilde{\mu}_{\lv}(\jj)d\lv\leq\\
	\sum_{n=0}^{\infty}\sum_{\underline{k}=n}\int_{J_{\delta}(\lv_0)}\iint_{\Sigma_{\underline{k}}}C(\lv)^2\left(\dfrac{|\det(A_{k_0}^{-1}(\lv)\cdots A_{k_{n-1}}^{-1}(\lv))|}{2\|A_{k_0}^{-1}(\lv)\cdots A_{k_{n-1}}^{-1}(\lv)\|^2}\right)^{-s}\sphericalangle(e^{ss}_{\lv}(\sigma^n\ii)),e^{ss}_{\lv}(\sigma^n\jj))^{-s}d\widetilde{\mu}_{\lv}(\ii)d\widetilde{\mu}_{\lv}(\jj)d\lv\leq\\
	\sum_{n=0}^{\infty}C'e^{sn(\chi_{\mu}^{ss}(\lv_0)-\chi_{\mu}^{s}(\lv_0)+6\varepsilon)}\sum_{\underline{k}=n}\int_{J_{\delta}(\lv_0)}\iint_{\Sigma_{\underline{k}}}\sphericalangle(e^{ss}_{\lv}(\sigma^n\ii)),e^{ss}_{\lv}(\sigma^n\jj))^{-s}d\widetilde{\mu}_{\lv}(\ii)d\widetilde{\mu}_{\lv}(\jj)d\lv.
	\end{multline*}
	By Lemma~\ref{ltechbound}, for any $\underline{k}$ with $|\underline{k}|=n$
	\begin{multline}\label{eneedcont}
	\mathcal{I}_{\underline{k}}:=\int_{J_{\delta}(\lv_0)}\iint_{\Sigma_{\underline{k}}}\sphericalangle(e^{ss}_{\lv}(\sigma^n\ii)),e^{ss}_{\lv}(\sigma^n\jj))^{-s}d\widetilde{\mu}_{\lv}(\ii)d\widetilde{\mu}_{\lv}(\jj)d\lv\leq\\
	\sum_{m=0}^{\infty}2^{(m+1)s}	\int_{J_{\delta}(\lv_0)}\iint_{\Sigma_{\underline{k}}}\mathbb{I}\left\{\sphericalangle(e^{ss}_{\lv}(\sigma^n\ii),e^{ss}_{\lv}(\sigma^n\jj))<\frac{1}{2^m}\right\}d\widetilde{\mu}_{\lv}(\ii)d\widetilde{\mu}_{\lv}(\jj)d\lv\leq\\
	\sum_{m=0}^{\infty}2^{(m+1)s}\int_{J_{\delta}(\lv_0)}\iint_{\Sigma_{\underline{k}}}\mathbb{I}\left\{\sphericalangle(e^{ss}_{\lv}(\sigma^n\ii|_0^{N(\lv_0,m)}\overline{1}),e^{ss}_{\lv}(\sigma^n\jj|_0^{N(\lv_0,m)}\overline{1}))<\frac{2}{2^m}\right\}d\widetilde{\mu}_{\lv}(\ii)d\widetilde{\mu}_{\lv}(\jj)d\lv=\\
	\sum_{m=0}^{\infty}2^{(m+1)s}
	\sum_{\substack{|\underline{l}|=N(\lv_0,m) \\ |\underline{h}|=N(\lv_0,m)}}\int_{J_{\delta}(\lv_0)}\iint_{[\underline{k}\underline{l}]\times[\underline{k}\underline{h}]}\mathbb{I}\left\{\sphericalangle(e^{ss}_{\lv}(\underline{h}\overline{1}),e^{ss}_{\lv}(\underline{l}\overline{1}))<\frac{2}{2^m}\right\}d\widetilde{\mu}_{\lv}(\ii)d\widetilde{\mu}_{\lv}(\jj)d\lv\\
	\end{multline}
	By applying \eqref{eentaprox}, the quasi-Bernoulli property of $\mu_{\lv_0}$, \eqref{econtmeasure} and the continuity of $\lv\mapsto h_{\mu_{\lv}}$
	\begin{multline*}
	\int_{J_{\delta}(\lv_0)}\mathbb{I}\left\{\sphericalangle(e^{ss}_{\lv}(\underline{h}\overline{1}),e^{ss}_{\lv}(\underline{l}\overline{1}))<\frac{2}{2^m}\right\}\widetilde{\mu}_{\lv}([\underline{k}\underline{l}])\widetilde{\mu}_{\lv}([\underline{k}\underline{h}])d\lv\leq\\
	C^2\int_{J_{\delta}(\lv_0)}\mathbb{I}\left\{\sphericalangle(e^{ss}_{\lv}(\underline{h}\overline{1}),e^{ss}_{\lv}(\underline{l}\overline{1}))<\frac{2}{2^m}\right\}\widetilde{\mu}_{\lv}([\underline{k}])^2\widetilde{\mu}_{\lv}([\underline{l}])\widetilde{\mu}_{\lv}([\underline{h}])d\lv\leq\\
	c'\mu_{\lv_0}([\underline{k}])\mu_{\lv_0}([\underline{l}])\mu_{\lv_0}([\underline{h}])e^{2
		\varepsilon(n+N(\lv_0,m))}e^{-n(h_{\mu_{\lv_0}}-2\varepsilon)}\mathcal{L}_d\left(\lv\in J_{\delta}(\lv_0):\sphericalangle(e^{ss}_{\lv}(\underline{h}\overline{1}),e^{ss}_{\lv}(\underline{l}\overline{1}))<\frac{2}{2^m}\right).
	\end{multline*}
	Hence, by \eqref{eneedcont} and the strong-stable transversality
	\begin{multline*}
	\mathcal{I}_{\underline{k}}\leq
	c'\mu_{\lv_0}([\underline{k}])\sum_{m=0}^{\infty}2^{(m+1)s}
	\sum_{\substack{|\underline{l}|=N(\lv_0,m) \\ |\underline{h}|=N(\lv_0,m)}}\mu_{\lv_0}([\underline{l}])\mu_{\lv_0}([\underline{l}])e^{2
		\varepsilon(n+N(\lv_0,m))}e^{-n(h_{\mu_{\lv_0}}-2\varepsilon)}\frac{C}{2^m}=\\
	c''\mu_{\lv_0}([\underline{k}])e^{-n(h_{\mu_{\lv_0}}-4\varepsilon)}\sum_{m=0}^{\infty}2^{m(s-1)+2\varepsilon N(\lv_0,m)/\log2}
	\end{multline*}
	Since $N(\lv_0,m)/\log2\leq m\frac{2}{\beta(\lambda_0)}+c(\lv_0)$
	\begin{equation*}
	\mathcal{I}\leq c'''\sum_{n=0}^{\infty}e^{n(s(\chi_{\mu}^{ss}(\lv_0)-\chi_{\mu}^{s}(\lv_0))-h_{\mu_{\lv_0}}+10\varepsilon)}\sum_{m=0}^{\infty}2^{m(s-1+\varepsilon\frac{4}{\beta(\lambda_0)})}.
	\end{equation*}
	Hence, by choosing $s<\min\left\{1-\varepsilon\frac{5}{\beta(\lv_0)},\frac{h_{\mu_{\lv_0}}-11\varepsilon}{\chi_{\mu}^{ss}(\lv_0)-\chi_{\mu}^{s}(\lv_0)}\right\}$ the right hand side of the inequality is finite. By Frostman's Lemma \cite[Theorem 4.13]{Fb1},
	$$
	\dim_H(e^{ss}_{\lv})_*\widetilde{\mu}_{\lv}\geq\min\left\{1-\varepsilon\frac{5}{\beta(\lv_0)},\frac{h_{\mu_{\lv_0}}-11\varepsilon}{\chi_{\mu}^{ss}(\lv_0)-\chi_{\mu}^{s}(\lv_0)}\right\}\text{ for $\mathcal{L}_d$-a.e. $\lv\in J_{\delta}(\lv_0)$.}
	$$
	But for every $\lv\in B_{\delta}(\lv_0)$, $\dim_H(e^{ss}_{\lv})_*\mu_{\lv}\geq\dim_H(e^{ss}_{\lv})_*\widetilde{\mu}_{\lv}$, moreover, $\mathcal{L}_d(B_{\delta}(\lv_0)/J_{\delta}(\lv_0))$ can be chosen arbitrary small, thus, the statement follows.
\end{proof}

\begin{proof}[Proof of Theorem~\ref{tss}]
	By Lemma~\ref{lubss} we have
	$$
	\dim_H(e^{ss}_{\lv})_*\mu_{\lv}\leq\min\left\{1,\frac{h_{\mu_{\lv}}}{\chi_{\mu_{\lv}}^{ss}(\lv)-\chi_{\mu_{\lv}}^{s}(\lv)}\right\}\text{ for every $\lv\in U$}.
	$$
	So it is enough to establish the lower bound. Let us argue by contradiction. Assume that there exist a set $U'\subset U$ with $\mathcal{L}_d(U')>0$ such that
	$$
	\dim_H(e^{ss}_{\lv})_*\mu_{\lv}\leq\min\left\{1,\frac{h_{\mu_{\lv}}}{\chi_{\mu_{\lv}}^{ss}(\lv)-\chi_{\mu_{\lv}}^{s}(\lv)}\right\}-\varepsilon \text{ for $\mathcal{L}_d$-a.e. $\lv\in U'$ for some $\varepsilon>0$}.
	$$
	Let $\lv_0\in U'$ a Lebesgue density point. Thus, there exists a $\delta_0>0$ that for every $\delta_0>\delta>0$
	$$\mathcal{L}_d\left(\lv\in B_{\delta}(\lv_0):	\dim_H(e^{ss}_{\lv})_*\mu_{\lv}\leq\min\left\{1,\frac{h_{\mu_{\lv}}}{\chi_{\mu_{\lv}}^{ss}(\lv)-\chi_{\mu_{\lv}}^{s}(\lv)}\right\}-\varepsilon\right)>0.$$
	By using the continuity of entropy and Lyapunov exponents we have for sufficiently small $\delta>0$
	$$\mathcal{L}_d\left(\lv\in B_{\delta}(\lv_0):	\dim_H(e^{ss}_{\lv})_*\mu_{\lv}\leq\min\left\{1,\frac{h_{\mu_{\lv_0}}}{\chi_{\mu_{\lv_0}}^{ss}(\lv_0)-\chi_{\mu_{\lv_0}}^{s}(\lv_0)}\right\}-\frac{\varepsilon}{2}\right)>0,$$
	but this contradicts Proposition~\ref{pdimlow}.
\end{proof}

\begin{proof}[Proof of Theorem~\ref{tdimGibbs2}]
    By \cite[Section~1]{Bbook}, a family of Gibbs measures for a uniformly continuously parametrized family of Holder continuous potentials is weakly continuous. Hence,  $\left\{\mu_{\lv}\right\}_{\lv\in U}$ satisfy equation~\eqref{econtmeasure}. Then by Theorem~\ref{tss}, we have
	$$
	\dim_H(\e{\lv}{ss})_*\mu_{\lv}=\min\left\{\frac{h_{\mu_{\lv}}}{\chi^{ss}_{\mu_{\lv}}(\lv)-\chi^{s}_{\mu_{\lv}}(\lv)},1\right\}\text{ for $\mathcal{L}_d$-a.e $\lv\in U$.}
	$$
	On the other hand, by Theorem~\ref{tdimGibbs}, if
	\begin{equation*}
	\frac{h_{\mu_{\lv}}}{\chi^{ss}_{\mu_{\lv}}(\lv)-\chi^{s}_{\mu_{\lv}}(\lv)}\geq\min\left\{\frac{h_{\mu_{\lv}}}{\chi^{s}_{\mu_{\lv}}(\lv)},1\right\}
	\end{equation*}
	the statement holds. Thus, we may assume that
	$$
	\frac{h_{\mu_{\lv}}}{\chi^{ss}_{\mu_{\lv}}(\lv)-\chi^{s}_{\mu_{\lv}}(\lv)}<1\text{,\ }\chi^{ss}_{\mu_{\lv}}(\lv)>2\chi^{s}_{\mu_{\lv}}(\lv)\text{\ and\ }
	\frac{h_{\mu_{\lv}}}{\chi^{ss}_{\mu_{\lv}}(\lv)-\chi^{s}_{\mu_{\lv}}(\lv)}+2\frac{h_{\mu_{\lv}}}{\chi^{ss}_{\mu_{\lv}}(\lv)}>2.
	$$
	By \cite[Lemma~4.12]{B}, we get that $\dim_H(\pi^-_{\lv})_*\mu_{\lv}\geq2\frac{h_{\mu_{\lv}}}{\chi^{ss}_{\mu_{\lv}}(\lv)}$ and the statement follows by Theorem~\ref{tdimGibbs}.

\end{proof}

\section{Proof of Theorem~\ref{tmaindim}}

Finally, in this section we prove Theorem~\ref{tmaindim} as an application of Theorem~\ref{tdimGibbs2}.

For a matrix $A\in\R^{2\times2}_+\cup\R^{2\times2}_-$ let
	\begin{equation}\label{eredifs}
	S(x,A):=\frac{|a|x+|c|(1-x)}{(|a|+|b|)x+(|c|+|d|)(1-x)}
	\text{ where }
	A=\left[\begin{matrix}
	a & b \\
	c & d
	\end{matrix}\right].
	\end{equation}
 Simple calculations show that the maps $S_i\in C^2[0,1]$, Moreover,
 \begin{equation}\label{econtr}
 \sup_{x\in[0,1]}|S'(x,A)|=\max\left\{|S'(0,A)|,|S'(1,A)|\right\}=\dfrac{|\det A|}{\vvvert A\vvvert^2},\text{ and}
 \end{equation}
 \begin{equation*}
 \inf_{x\in[0,1]}|S'(x,A)|=\min\left\{|S'(0,A)|,|S'(1,A)|\right\}=\dfrac{|\det A|}{\|A\|_{\infty}^2},
 \end{equation*}
 where $\|A\|_{\infty}=\max\left\{|a|+|b|,|c|+|d|\right\}$ the usual $\infty$-norm of matrices.

\begin{lemma}
	Let $\mathcal{A}=\left\{A_1,\dots,A_N\right\}$ be a set of non-singular matrices with either strictly positive or strictly negative elements such that $\frac{|\det A_i|}{\vvvert A_i\vvvert^2}<1$. Let $\phi=\left\{S_i(.):=S(.,A_i)\right\}_{i=1}^N$ be IFS on $[0,1]$ and let $\Pi:\Sigma^+\mapsto[0,1]$ be the natural projection of $\phi$. Then for every $\ii_+\in\Sigma^+$ the vector~$\left(\Pi(\ii_+)-1,\Pi(\ii_+)\right)^T\in e^{ss}(\ii_+).$
\end{lemma}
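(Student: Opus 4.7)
The plan is to exhibit the IFS $\phi$ on $[0,1]$ as the conjugate, under the parameterisation $\iota:[0,1]\to\mathbf{P}^1$ defined by $\iota(x):=[(x-1,x)^T]$, of the natural projective action of $\{A_i^{-1}\}_{i=1}^N$ on a backward-invariant multicone in $\mathbf{P}^1$, and then to identify the attractor of $\phi$ with the strong-stable direction. First observe that $\iota$ is a homeomorphism of $[0,1]$ onto the projective image of the closed second quadrant of $\R^2$, with $\iota(0)$ and $\iota(1)$ the directions of the negative $x$-axis and the positive $y$-axis. Let $M$ be the projective image of the closed union of the second and fourth quadrants, i.e.\ the complement in $\mathbf{P}^1$ of the open positive cone. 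Strict positivity (or strict negativity) of the entries of every $A_i$ forces $A_i$ to send the closed positive cone into its interior, so $A_i^{-1}$ sends $M$ strictly into $M^o$. Hence $M$ is a backward-invariant multicone for $\mathcal A$, dominated splitting holds, and by Lemma \ref{ldomsplit} the strong-stable direction satisfies $e^{ss}(\ii_+)=\bigcap_{n\geq 1} A_{i_0}^{-1}\cdots A_{i_{n-1}}^{-1}(M)$.

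The crux is the identity
\begin{equation*}
A^{-1}\iota(x)=\iota\bigl(S(x,A)\bigr)\quad\text{in }\mathbf{P}^1,\qquad\text{for every }A\in\mathcal A\text{ and }x\in[0,1].
\end{equation*}
For $A$ with strictly positive entries $a,b,c,d$, a direct expansion of $A^{-1}(x-1,x)^T$ followed by solving $(s-1,s)\parallel A^{-1}(x-1,x)^T$ for $s$ returns exactly the formula \eqref{eredifs} with the absolute values stripped off. The strictly negative case merely multiplies the image vector through by $-1$, which is invisible projectively, and the absolute values in \eqref{eredifs} encode both regimes uniformly. Thus the one parameterised formula represents the projective action of $A^{-1}$ in either sign case.

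Iterating this conjugation gives $\iota(S_{i_0}\circ\cdots\circ S_{i_{n-1}}(x_0))=A_{i_0}^{-1}\cdots A_{i_{n-1}}^{-1}\iota(x_0)$ in $\mathbf{P}^1$ for every $n\geq 1$, every $x_0\in[0,1]$ and every $\ii_+\in\Sigma^+$. By the hypothesis $|\det A_i|/\vvvert A_i\vvvert^2<1$ together with \eqref{econtr}, each $S_i$ is a uniform contraction, so the left-hand side converges to $\iota(\Pi(\ii_+))$; on the other hand, backward invariance of $M$ makes the right-hand side a nested decreasing sequence in $M$ whose intersection is exactly $e^{ss}(\ii_+)$. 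Continuity of $\iota$ then forces $\iota(\Pi(\ii_+))=e^{ss}(\ii_+)$, which is the claim. The only real obstacle is executing the projective algebra of the conjugation carefully enough to confirm that the absolute-value expression in \eqref{eredifs} is precisely what matches the projective image of $A^{-1}(x-1,x)^T$ in both the all-positive and all-negative regimes; after this, the lemma is a standard identification between the attractor of a contracting IFS and the limit of a nested decreasing family of multicones.
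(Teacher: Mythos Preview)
Your proof is correct and follows essentially the same approach as the paper: both establish the projective identity $A_{i_0}^{-1}[(\Pi(\sigma\ii_+)-1,\Pi(\sigma\ii_+))^T]\parallel(\Pi(\ii_+)-1,\Pi(\ii_+))^T$ via the same direct calculation, after observing that the cone $\{xy\leq0\}$ is backward invariant. The only cosmetic difference is that the paper states the recursion once and then appeals to ``Lemma~\ref{ldomsplit} and uniqueness'' of the equivariant direction inside $M$, whereas you iterate the conjugation $A^{-1}\iota(x)=\iota(S(x,A))$ explicitly and pass to the limit using the nested-intersection description of $e^{ss}$; these are two equivalent ways of identifying the strong-stable direction and neither buys anything the other does not.
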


\begin{proof}
	Let $\mathcal{A}=\left\{A_1,\dots,A_N\right\}$ and the IFS~$\phi=\left\{S_1,\dots,S_N\right\}$ be as required. It is easy to see that the cone $M=\left\{(x,y)\in\R^2/\left\{(0,0)\right\}:xy\leq0\right\}$ is backward invariant. So, by \cite[Theorem~B]{BG}, $\mathcal{A}$ satisfies the dominated splitting.

	For an $\ii_+\in\Sigma^+$ let $e^{ss}(\ii_+)$ be the invariant strong stable direction defined in \eqref{eredifs}. By the definition of $\Pi:\Sigma^+\mapsto[0,1]$
	\begin{multline*}
	\left(\begin{matrix}\Pi(\ii_+)-1\\ \Pi(\ii_+)\end{matrix}\right)=\dfrac{\left(\begin{matrix}
		-b_{i_0}\Pi(\sigma\ii_+)-d_{i_0}(1-\Pi(\sigma\ii_+)) \\
		a_{i_0}\Pi(\sigma\ii_+)+c_{i_0}(1-\Pi(\sigma\ii_+))
		\end{matrix}\right)}{(|a_{i_0}|+|b_{i_0}|)\Pi(\sigma\ii_+)+(|c_{i_0}|+|d_{i_0}|)(1-\Pi(\sigma\ii_+))}=\\
	\dfrac{\det A_{i_0}}{(|a_{i_0}|+|b_{i_0}|)\Pi(\sigma\ii_+)+(|c_{i_0}|+|d_{i_0}|)(1-\Pi(\sigma\ii_+))}A_{i_0}^{-1}\left(\begin{matrix}\Pi(\sigma\ii_+)-1\\ \Pi(\sigma\ii_+)\end{matrix}\right).
	\end{multline*}
	Thus, by Lemma~\ref{ldomsplit} and uniqueness, the $1$ dimensional subspace $e^{ss}(\ii_+)$ contains $\left(\Pi(\ii_+)-1,\Pi(\ii_+)\right)^T$.
\end{proof}

\begin{lemma}\label{ldim}
	Let $\mathcal{A}=\left\{A_1,\dots,A_N\right\}$ be arbitrary such that $A_i\in\mathfrak{M}$, where $\mathfrak{M}$ is defined in \eqref{emxset}. Moreover, let $\mathcal{A}(\underline{t})=\left\{A_1+t_1B_1,\dots,A_N+t_NB_N\right\}$, where $\underline{t}\in\R^N$
	\begin{equation}\label{edefb}
	A_i=\left(\begin{matrix}
	a_i & b_i \\
	c_i & d_i
	\end{matrix}\right)\text{ and }B_i=\left(\begin{matrix}
	a_i+b_i & -(a_i+b_i) \\
	c_i+d_i & -(c_i+d_i)
	\end{matrix}\right).
	\end{equation}
	Then there exists a $\delta=\delta(\mathcal{A})>0$ such that the IFS $\phi_{\tv}=\left\{S_i^{\tv}(.):=S(.,A_i+t_iB_i)\right\}_{i=1}^N$ satisfies the transversality condition on $(-\delta,\delta)^N$.
	
	In particular, $\mathcal{A}(\underline{t})$ satisfies the strong-stable transversality condition on $(-\delta,\delta)^N$.
\end{lemma}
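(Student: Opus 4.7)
The plan is to exploit the very specific form of the perturbation $B_i$. A direct computation—the two columns of $B_i$ sum to zero and each column equals $\pm$(row sums of $A_i$)—shows that as long as $\tv$ is small enough that the signs of the entries of $A_i+t_iB_i$ coincide with those of $A_i$, one has
$$S(x, A_i+t_iB_i) = S(x,A_i)+t_i,$$
so $\phi_{\tv}$ is simply $\phi_{\underline 0}$ with each map translated by $t_i$. Expanding $\det(A_i+t_iB_i)$ one finds that the coefficient of $t_i$ cancels (because $B_i$ has rank one), so $\det(A_i+t_iB_i)=\det A_i$; and preservation of row sums gives $\vvvert A_i+t_iB_i\vvvert=\vvvert A_i\vvvert$. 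By \eqref{econtr} the contraction bound
$$L:=\max_i\sup_{x,\tv}|(S_i^{\tv})'(x)|=\max_i\frac{|\det A_i|}{\vvvert A_i\vvvert^2}<\tfrac12$$
therefore persists uniformly on some $(-\delta,\delta)^N$. On the same neighbourhood the cone $\{xy\le0\}$ (or $\{xy\ge0\}$, depending on the sign class of $A_i$) stays backward-invariant for the inverses, so $\mathcal A(\tv)$ satisfies the dominated splitting throughout $(-\delta,\delta)^N$.

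Next, by the preceding lemma $(\Pi_{\tv}(\ii)-1,\Pi_{\tv}(\ii))^T\in e^{ss}_{\tv}(\ii)$, where $\Pi_{\tv}$ is the natural projection of $\phi_{\tv}$ onto $[0,1]$. Since the cross-product of two such vectors equals $\Pi_{\tv}(\ii)-\Pi_{\tv}(\jj)$ while their Euclidean norms lie in $[1/\sqrt2,\sqrt2]$, Lemma~\ref{ldistcomp} gives
$$\sphericalangle\bigl(e^{ss}_{\tv}(\ii),e^{ss}_{\tv}(\jj)\bigr)\asymp\bigl|\Pi_{\tv}(\ii)-\Pi_{\tv}(\jj)\bigr|$$
with uniform constants. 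Thus strong-stable transversality reduces to the classical one-dimensional transversality estimate
$$\mathcal L_N\bigl\{\tv\in(-\delta,\delta)^N:|\Pi_{\tv}(\ii)-\Pi_{\tv}(\jj)|<r\bigr\}\le Cr$$
for all $\ii,\jj\in\Sigma^+$ with $i_0\ne j_0$.

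To establish this, I would differentiate in the direction $e_a-e_b\in\R^N$, where $a=i_0$, $b=j_0$. Using the explicit recursion
$$\partial_{t_k}\Pi_{\tv}(\ii)=\sum_{n\ge0,\,i_n=k}\prod_{m=0}^{n-1}(S_{i_m}^{\tv})'(\Pi_{\tv}(\sigma^{m+1}\ii)),$$
the $n=0$ terms contribute $+1$ to $\partial_{t_a}\Pi_{\tv}(\ii)$ and $+1$ to $\partial_{t_b}\Pi_{\tv}(\jj)$, so the constant piece is exactly $+2$. The remaining $n\ge 1$ terms split into two groups, one from $\ii$ and one from $\jj$; within each group the positions $\{n\ge1:i_n=a\}$ and $\{n\ge1:i_n=b\}$ are disjoint, so the signed sum is bounded in absolute value by $\sum_{n\ge1}L^n=L/(1-L)$. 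Hence
$$\bigl|(\partial_{t_a}-\partial_{t_b})(\Pi_{\tv}(\ii)-\Pi_{\tv}(\jj))\bigr|\ge 2-\frac{2L}{1-L}=\frac{2(1-2L)}{1-L}>0$$
uniformly on $(-\delta,\delta)^N$ since $L<\tfrac12$. A Fubini argument along affine lines parallel to $e_a-e_b$ then produces the desired estimate.

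The main obstacle is exactly this last directional-derivative bound: the naïve approach of bounding each coordinate partial derivative separately would force $L<\tfrac13$, whereas the threshold hard-wired into $\mathfrak M$ is $\tfrac12$. The saving comes from the observation that, within one symbolic sequence, the position sets $\{i_n=a\}$ and $\{i_n=b\}$ are disjoint, so the two error geometric series \emph{share} the budget $L/(1-L)$ rather than each eating it. Securing and formalising this disjointness-based bound—together with the required continuity to justify the Fubini step—is the delicate part of the argument.
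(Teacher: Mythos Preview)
Your argument is correct and essentially self-contained, whereas the paper proceeds differently at the key step. Both the paper and you begin with the same observation that $S_i^{\tv}(x)=S_i(x)+t_i$, and both reduce strong-stable transversality to one-dimensional transversality of $\Pi_{\tv}$ via the preceding lemma together with Lemma~\ref{ldistcomp}. The divergence is in how the one-dimensional transversality is obtained: the paper simply invokes \cite[Corollary~7.3]{SSU}, which supplies transversality for translation families of $C^2$ contractions with contraction ratio below $1/2$; you instead prove this directly by differentiating $\Pi_{\tv}(\ii)-\Pi_{\tv}(\jj)$ in the direction $e_{i_0}-e_{j_0}$ and showing the derivative is bounded below by $2(1-2L)/(1-L)$. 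Your disjointness observation is exactly the mechanism behind the $1/2$ threshold in \cite{SSU}, so you are in effect reproducing the content of that corollary. The paper's route is shorter by outsourcing; yours makes transparent why the bound $|\det A_i|/\vvvert A_i\vvvert^2<1/2$ in the definition of $\mathfrak{M}$ is the right one, and avoids an external dependence. One small remark: your directional derivative is actually \emph{positive} (not merely nonzero), since the leading term $2$ dominates the error $2L/(1-L)<2$; this gives monotonicity along each line parallel to $e_{i_0}-e_{j_0}$ and makes the Fubini step routine, so the ``delicate part'' you flag is in fact not an obstacle.
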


\begin{proof}
	Since $\mathfrak{M}^N$ is open, there exists a $\varepsilon=\varepsilon(\mathcal{A})>0$ that $\mathcal{A}(\underline{t})\in\mathfrak{M}^N$ for every $\underline{t}\in(-\varepsilon,\varepsilon)^N$. Let $\phi=\left\{S_1,\dots,S_N\right\}$ be the IFS for $\mathcal{A}$ and $\phi_{\underline{t}}=\left\{S_1^{\underline{t}},\dots,S_N^{\underline{t}}\right\}$ be the IFS for $\mathcal{A}(\underline{t})$. Simple calculations show that $S_i^{\underline{t}}(x)=S_i(x)+t_i$ for every $i=1,\dots,N$. By the definition of $\mathfrak{M}$, by \eqref{econtr} and by \cite[Corollary~7.3]{SSU} there exists $\delta=\delta(\mathcal{A})>0$ such that $\delta<\varepsilon$ and $\phi_{\underline{t}}$ satisfies the transversality condition. By Lemma~\ref{ldim} and Definition~\ref{dsstrans}, it follows that $\mathcal{A}(\underline{t})$ satisfies the strong-stable transversality on $(-\delta,\delta)^N$.
\end{proof}

\begin{lemma}\label{lfoli}
	Let us define for every $\mathcal{A}\in \mathfrak{M}^N$
	\begin{equation*}
	P(\mathcal{A}):=\mathfrak{M}^N\cap\bigcup_{\underline{t}\in\R^N}\mathcal{A}(\underline{t}),
	\end{equation*}
	where $\mathcal{A}(\underline{t})$ is defined in Lemma~\ref{ldim}. Then $P$ defines a measurable partition of $\mathfrak{M}^N$.
\end{lemma}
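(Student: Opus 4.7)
The plan is to exhibit an equivalence relation whose equivalence classes coincide with the sets $P(\mathcal{A})$, and then to realise this relation as the fibres of a continuous map to a standard Borel space, which is the cleanest criterion for measurability (cf.\ Rokhlin).

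The first step is a direct computation with $B_i$ as in \eqref{edefb}. For any $A=\left(\begin{smallmatrix}a&b\\c&d\end{smallmatrix}\right)$ with associated $B=B(A)$, I will verify that
$$
A+tB=\begin{pmatrix} a+t(a+b) & b-t(a+b) \\ c+t(c+d) & d-t(c+d)\end{pmatrix},
$$
so that the quantities $a+b$, $c+d$, and $\det A$ are all preserved by the flow $t\mapsto A+tB(A)$, and in particular $B(A+tB(A))=B(A)$. This last identity is the crucial one: it shows that the perturbation $t_i\mapsto A_i+t_iB_i$ is a one-parameter group action on $\mathbb{R}^{2\times2}$, and consequently, with $\underline{t}\mapsto\mathcal{A}(\underline{t})$ acting component-wise, an $\mathbb{R}^N$-action on $\mathbb{R}^{4N}$.

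Given this group property, define $\mathcal{A}\sim\mathcal{B}$ iff $\mathcal{B}=\mathcal{A}(\underline{t})$ for some $\underline{t}\in\R^N$; reflexivity is trivial, symmetry uses $\mathcal{A}=\mathcal{B}(-\underline{t})$, and transitivity follows from $\mathcal{A}(\underline{t})(\underline{s})=\mathcal{A}(\underline{t}+\underline{s})$ (which itself uses $B(A+tB(A))=B(A)$). Then $P(\mathcal{A})$ is exactly the equivalence class of $\mathcal{A}$ intersected with $\mathfrak{M}^N$, so $\{P(\mathcal{A}):\mathcal{A}\in\mathfrak{M}^N\}$ is a partition of $\mathfrak{M}^N$.

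For measurability, introduce the continuous map
$$
\pi:\mathfrak{M}^N\longrightarrow\R^{3N},\qquad \pi(\mathcal{A})=\bigl(a_i+b_i,\ c_i+d_i,\ \det A_i\bigr)_{i=1}^{N}.
$$
The preceding computation shows that $\pi$ is constant on each orbit. Conversely, for $A\in\mathfrak{M}$ all entries have the same strict sign, so $\alpha:=a+b$ and $\gamma:=c+d$ are both nonzero, and the locus $\{A':a'+b'=\alpha,\ c'+d'=\gamma,\ \det A'=\delta\}$ reduces to the linear equation $a'\gamma-c'\alpha=\delta$ in $(a',c')$, which is precisely the line $\{A+tB(A):t\in\R\}$. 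Hence each fibre of $\pi$, intersected with $\mathfrak{M}^N$, equals exactly one set $P(\mathcal{A})$. Since $\pi$ is continuous and $\R^{3N}$ is a standard Borel space, pulling back a sequence of countable Borel partitions of $\R^{3N}$ with mesh tending to $0$ yields refining countable measurable partitions of $\mathfrak{M}^N$ whose common refinement is $P$; by Rokhlin's criterion, $P$ is a measurable partition.

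No deep step is really required; the only point to be careful about is verifying the invariance $B(A+tB(A))=B(A)$, together with the fact that on $\mathfrak{M}$ one has $\alpha,\gamma\neq0$ so that the fibre of the invariant map is indeed one-dimensional (and therefore equals the whole flow orbit, not just a proper subset of it). Everything else is standard.
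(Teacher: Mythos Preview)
Your argument is correct and rests on the same observation the paper uses: the row sums $a_i+b_i$, $c_i+d_i$ are preserved along the flow $t_i\mapsto A_i+t_iB_i$, so that $B(A_i+t_iB_i)=B_i$ and intersecting orbits must coincide. The paper's proof records exactly this (deducing $a_i+b_i=a_i'+b_i'$, $c_i+d_i=c_i'+d_i'$ from a common point, hence $B_i=B_i'$ and the orbits agree) and then simply declares measurability ``straightforward''; your version is more careful in that you supply an explicit continuous separating map $\pi(\mathcal{A})=(a_i+b_i,\,c_i+d_i,\,\det A_i)_{i}$ whose fibres on $\mathfrak{M}^N$ are precisely the $P(\mathcal{A})$, which gives a clean Rokhlin-type justification of measurability. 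The determinant invariant is not needed for the partition statement itself (row sums already force $B_i=B_i'$, and then a common point pins down the orbit), but it is what lets you identify the fibre with a single line rather than a $2$-plane, so it earns its place in your measurability argument.
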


\begin{proof}
	By the definition of $P$ it is enough to show that if $\mathcal{A}\neq\mathcal{A}'$ then either $P(\mathcal{A})=P(\mathcal{A}')$ or $P(\mathcal{A})\cap P(\mathcal{A}')=\emptyset$.
	
	Let us fix $\mathcal{A}\neq\mathcal{A}'$ and suppose that $P(\mathcal{A})\cap P(\mathcal{A}')\neq\emptyset$. Then there exist $t_1,\dots,t_N\in\R$ and $t_1',\dots,t_N'\in\R$ that $A_i+t_iB_i=A_i'+t_i'B_i'$ for every $i=1,\dots,N$, where $B_i$ and $B_i'$ defined in \eqref{edefb}. Thus $a_i+b_i=a_i'+b_i'$ and $c_i+d_i=c_i'+d_i'$. Hence, $P(\mathcal{A})=P(\mathcal{A}')$. The measurability is straightforward.
\end{proof}

\begin{proof}[Proof of Theorem~\ref{tmaindim}]
First we show that if $\mathcal{A}\in\mathfrak{N}^N\cup\mathfrak{O}_N$, where $\mathfrak{N}^N$ and $\mathfrak{O}_N$ are defined in \eqref{emxset2}, then condition \eqref{ctdim32} of Theorem~\ref{tdimGibbs2} holds for the K\"aenm\"aki measure $\mu^K$ of $\mathcal{A}$, defined in Definition~\ref{dKaenmaki}.
	
Indeed, if $\mathcal{A}\in\mathfrak{N}^N$ then $\dfrac{h_{\mu^K}}{\chi^{ss}_{\mu^K}-\chi^{s}_{\mu^K}}\geq\dfrac{h_{\mu^K}}{\chi^{s}_{\mu^K}}$ and on the other hand, if $\mathcal{A}\in\mathfrak{O}_N$ then
\begin{multline*}
\dfrac{h_{\mu^K}}{\chi^{ss}_{\mu^K}-\chi^{s}_{\mu^K}}+2\dfrac{h_{\mu^K}}{\chi^{ss}_{\mu^K}}=\dfrac{\chi^{s}_{\mu^K}+(s_0-1)\chi^{ss}_{\mu^K}}{\chi^{ss}_{\mu^K}-\chi^{s}_{\mu^K}}+2\dfrac{\chi^{s}_{\mu^K}+(s_0-1)\chi^{ss}_{\mu^K}}{\chi^{ss}_{\mu^K}}=\\
-3+\left(2+\dfrac{1}{1-\frac{\chi^{s}_{\mu^K}}{\chi^{ss}_{\mu^K}}}\right)s_0+2\frac{\chi^{s}_{\mu^K}}{\chi^{ss}_{\mu^K}}>\frac{1}{3}+\dfrac{5}{3\left(1-\frac{\chi^{s}_{\mu^K}}{\chi^{ss}_{\mu^K}}\right)}+2\frac{\chi^{s}_{\mu^K}}{\chi^{ss}_{\mu^K}}>2.
\end{multline*}

Now, let $V\subset\mathfrak{N}^N\cup\mathfrak{O}_N\subset\mathfrak{M}^N$ be a compact set such that $\overline{V^{o}}=V$. Let us define for a $\mathcal{A}\in V$
$$
Q(\mathcal{A}):=V\cap P(\mathcal{A}),
$$
Thus,  $\bigcup_{\mathcal{B}\in P(\mathcal{A})}\left\{\bigcup_{\underline{t}\in(-\delta(\mathcal{B}),\delta(\mathcal{B}))^N}\mathcal{B}(\underline{t})\right\}$ defines an open cover of $Q(\mathcal{A})$. Since $Q(\mathcal{A})$ is compact there is a finite set $\left\{\mathcal{B}_1,\dots,\mathcal{B}_n\right\}$ that $\bigcup_{i=1}^n\left\{\bigcup_{\underline{t}\in(-\delta(\mathcal{B}_i),\delta(\mathcal{B}_i))^N}\mathcal{B}_i(\underline{t})\right\}$ is a cover for $Q(\mathcal{A})$. But by Lemma~\ref{ldim}, for every $i=1,\dots,n$ the parametrized family of matrices $\mathcal{B}_i(\underline{t})$ satisfies the strong-stable transversality condition on $(-\delta(\mathcal{B}_i),\delta(\mathcal{B}_i))^N$. Thus, by Theorem~\ref{tdimGibbs2} for every $i=1,\dots,n$
$$
\dim_H\mu^K_{\tv}=\dim_H\Lambda_{\tv}=\dim_B\Lambda_{\tv}=s_0(\tv)\text{ for $\mathcal{L}_N$-a.e $\tv\in(-\delta(\mathcal{B}_i),\delta(\mathcal{B}_i))^N$,}
$$
where $\mu^K_{\tv}$ is the K\"aenm\"aki measure of the system $\mathcal{B}_i(\tv)$ and $s_0(\tv)$ is the affinity dimension. In particular, for every $\mathcal{A}\in V$
$$
\dim_H\mu^K=\dim_H\Lambda=\dim_B\Lambda=s_0(\mathcal{B})\text{ for $\mathcal{L}_N$-a.e $\mathcal{B}\in Q(\mathcal{A})$.}
$$
By Lemma~\ref{lfoli}, $Q$ is a measurable foliation of $V$, thus, by Rokhlin's Theorem
$$
\dim_H\mu^K=\dim_H\Lambda=\dim_B\Lambda=s_0(\mathcal{A})
\text{ for }\mathcal{L}_{4N}\text{-a.e. }\mathcal{A}\in V.
$$
Since $V$ was arbitrary, the statement follows.
\end{proof}

\end{document}